%
%
%

\documentclass[graybox]{svmult}


\usepackage{mathptmx}       
\usepackage{helvet}         
\usepackage{courier}        
\usepackage{type1cm}        
%
\usepackage{makeidx}         
\usepackage{graphicx}        
\usepackage{multicol}        
\usepackage[bottom]{footmisc}

\usepackage{amsfonts}

\usepackage{amssymb}
\usepackage{amsmath}
\usepackage{float}
\usepackage{xcolor}
\usepackage{epsfig}

\def\bS{{\bf S}}
\newtheorem{prop}{Proposition}


\makeindex             


\begin{document}

\title*{Binary Mean Field Stochastic Games: Stationary Equilibria   and Comparative Statics\thanks{in IMA volume {\it Modeling, Stochastic Control, Optimization, and Applications}, eds. G. Yin and Q. Zhang, Springer, 2019, p. 283-313. Submitted Dec 2018; revised Feb 2019. This version: Oct 10, 2020. Minor changes in Example~2}}
\author{Minyi Huang and Yan Ma}
\institute{Minyi Huang \at   School of Mathematics and Statistics, Carleton University, Ottawa, ON
K1S 5B6, Canada  \email{mhuang@math.carleton.ca.} This author was supported by the Natural Sciences and Engineering Research Council (NSERC) of Canada 
\and Yan Ma \at School of Mathematics and Statistics, Zhengzhou University, 450001,  Henan, China
 \email{mayan203@zzu.edu.cn.} This author was supported by the National Science Foundation of China (No. 11601489)  }
%
%
\maketitle

\abstract*{This paper considers  mean field games in a multi-agent Markov decision process (MDP) framework. Each player has a continuum  state and  binary action, and benefits from the improvement of the condition of the overall population. Based on an infinite horizon discounted individual cost, we show existence of a stationary equilibrium, and prove its uniqueness under a positive externality condition. We further analyze comparative statics of the stationary equilibrium by quantitatively determining the impact of the effort cost.}

\abstract{This paper considers  mean field games in a multi-agent Markov decision process (MDP) framework. Each player has a continuum  state and  binary action, and benefits from the improvement of the condition of the overall population. Based on an infinite horizon discounted individual cost, we show existence of a stationary equilibrium, and prove its uniqueness under a positive externality condition. We further analyze comparative statics of the stationary equilibrium by quantitatively determining the impact of the effort cost.}

\section{Introduction}

Mean field game theory provides a powerful methodology for reducing complexity in the analysis and design of strategies in large population dynamic games \cite{HCM03,HMC06,LL07}.
Following ideas in statistical physics, it takes a continuum approach to specify the aggregate impact of many individually insignificant players and solves a special stochastic optimal control problem from the point of view of a representative player.
  By this methodology, one may
construct a set of decentralized strategies
for the original large but finite population model and show its $\varepsilon$-Nash equilibrium property \cite{HCM03,HCM07,HMC06}.
 A related solution notion in Markov decision models is the oblivious equilibrium \cite{WBR08}.
  The readers are referred to \cite{BFY13,C14,CHM17,C12,CD18} for an overview on mean field game theory and further references. For mean field type optimal control, see
\cite{BFY13,Y11}, but the analysis in these models only involves a single decision maker.

Dynamic games within an MDP setting originated from the work of  Shapley  and are called stochastic games \cite{FV97,S53}. Their mean field game extension has been studied in the literature; see e.g. \cite{AJW15,B15,STR18,WBR08}.
Continuous time mean field games with finite state space can be found in \cite{GMS10,K12}.
Our previous work \cite{HM16Chen,HM17}
studied a class of mean field games in a multi-agent Markov decision process (MDP) framework.  The players in \cite{HM16Chen}  have continuum state spaces and binary action spaces, and have  coupling through their costs. The state of each player is used to model its risk (or unfitness) level, which has random increase if no active control is taken.  Naturally, the one-stage cost of a player is an increasing function of its own state apart from coupling with others. The motivation of this modeling framework comes from  applications including network security investment games and flue vaccination games \cite{JAW11,LB08,MP10};
when the one-stage cost is  an increasing function of the population average state, it reflects positive externalities. Markov decision processes with binary action spaces also arise in control of queues
and machine replacement problems \cite{AS95,BR11}. Binary choice models have formed a subject of
significant interest \cite{B13,BD01,S73,S95,WWA11}.
Our game model has connection with anonymous sequential games
\cite{JR88}, which combine stochastic game modeling with a continuum of players.
In anonymous sequential games one determines the equilibrium as a joint state-action distribution of the  population and  leaves the individual
strategies unspecified \cite[Sec. 4]{JR88}, although there is an interpretation of randomized actions for players sharing a given state.

For both anonymous games and MDP based mean field games, stationary solutions with discount have been studied in the literature \cite{AJW15,JR88}.  These works give more focus on fixed point analysis to prove the existence of a stationary distribution. This approach does not address ergodic behavior of individuals or the population while assuming the  population starts from the steady-state distribution at the initial time. Thus, there is a need to examine whether the individuals collectively have the ability to move into that distribution at all when they have a general initial distribution.
    Our  ergodic analysis based approach will provide justification of the stationary solution regarding the population's ability to settle down around the limiting distribution.

The previous work \cite{HM16Chen, HM17} studied the finite horizon mean field game by showing existence of a solution with threshold policies, and  under an infinite horizon discounted cost further proved there is at most one stationary equilibrium for which existence was not established.
A similar continuous time modeling is introduced in \cite{ZH17}, which addresses Poisson state jumps and impulse control. It should be noted that except for linear-quadratic models \cite{B12,HCM07,HZ18,LZ08,MB17}, mean field games rarely have
closed-form solutions and often rely on heavy numerical computations. Within this context, the consideration of structured solutions, such as threshold policies, is of particular interest  from the point of view of efficient computation and simple implementation. Under such a policy, the individual states evolve as regenerative processes \cite{A03,SW93}.

By exploiting stochastic monotonicity, this paper adopts more  general state transition assumptions than in \cite{HM16Chen, HM17} and continues the analysis on
 the stationary equation
 system. The first contribution of the present paper is the proof of the existence of a stationary equilibrium. Our analysis depends on checking the continuous dependence of the limiting state distribution on the threshold parameter in the  best response.
The existence and uniqueness analysis in this paper has appeared in a preliminary form in the conference paper \cite{HMcdc17}.

A key parameter in our game model is the effort cost. Intuitively, this parameter is a disincentive indicator of an individual for taking active efforts, and in turn will further impact the mean field forming the ambient environment of that agent. This suggests that we can study a family of mean field games parametrized by the effort costs and compare their solution behaviors.
We  address this in the setup of comparative statics, which have a long history in the economic literature   \cite{H39,MS94,S83} and operations research \cite{T98} and provide the primary means to analyze the effect of model parameter variations. For dynamic models, such as economic growth models, the analysis follows similar ideas and  is sometimes  called comparative dynamics \cite{A96,B85,O73,S83}  by comparing two dynamic equilibria. In control and optimization, such studies are usually called sensitivity analysis \cite{BS00,D63,IK92}. For comparative statics in large static  games and  mean field games, see \cite{AJ13,AJ15}.
Our analysis is accomplished by performing perturbation analysis around the equilibrium of the mean field game.

The paper is organized as follows. Section \ref{sec:mean} introduces the mean field stochastic game. The best response is analyzed in Section \ref{sec:br}.
 Section \ref{sec:se} proves existence and uniqueness of stationary  equilibria.
Comparative statics are analyzed in Section \ref{sec:cs}.
 Section \ref{sec:con} concludes the paper.

\section{The  Markov Decision Process Model}
\label{sec:mean}

\subsection{Dynamics and Costs}

The system consists of $N$  players
denoted by ${\cal A}_i$, $1\le i\le N$.
At time $t\in \mathbb{Z}_+=\{0, 1,2, \ldots\}$, the state of ${\cal A}_i$ is denoted by $x_t^i$, and its action by $a_t^i$.
For simplicity, we consider a population of homogeneous (or symmetric) players.
Each player has  state space ${\bf S}=[0,1]$ and action space  ${\bf A}=\{a_0, a_1\}$.  A value of ${\bf S}$ may be interpreted as a risk or unfitness level.
A player can either
 take inaction (as $a_0$) or
 make an active effort (as $a_1$).
  For an interval $I$, let ${\cal B}(I)$ denote the Borel $\sigma$-algebra of $I$.

The state of each player evolves as a controlled Markov  process,
 which is affected only by its own action.
For $t\ge 0$ and $x\in \bS$,  the state has  a transition kernel specified  by
\begin{align}
&P(x_{t+1}^i\in B|x_t^i =x, a_t^i=a_0)= Q_0(B|x), \label{xa0}\\
&P(x_{t+1}^i=0|x_t^i =x, a_t^i=a_1)=1 , \label{xa1}
\end{align}
where $Q_0(\cdot|x)$ is a stochastic kernel defined for
$B\in {\cal B}(\bS) $
and $Q_0([x,1]|x)=1$.
 By the structure of $Q_0$, the state of the player deteriorates if no active control is taken.
The vector process $(x_t^1, \ldots x_t^N)$  constitutes a controlled Markov process in higher dimension with its transition kernel defining  a product measure on $({\cal B}(\bS))^N$ for given $(x_t^1,\cdots, x_t^N,a_t^1, \ldots, a_t^N)$.

Define the population average state
 $x^{(N)}_t= \frac{1}{N} \sum_{i=1}^N x_t^i$.
The one stage cost of ${\cal A}_i$   is
$$
c(x_t^i, x^{(N)}_t, a_t^i)= R(x_t^i, x^{(N)}_t) +\gamma 1_{\{a_t^i=a_1\}},
$$
where $\gamma>0$ and  $\gamma 1_{\{a_t^i=a_1\}}$ is the effort cost. The function $R\ge 0$  is defined on   $\bS\times\bS$ and models the risk-related cost. Let $\nu^i$ denote the strategy of ${\cal A}_i$.
 We  introduce
 the infinite horizon  discounted cost
\begin{align}
J_{i}(x_0^1, \ldots, x_0^N, \nu^1, \ldots, \nu^N)=
E \sum_{t=0}^\infty \beta^t c(x_t^i, x^{(N)}_t, a_t^i),
\quad 1\le i\le N. \label{jfb}
\end{align}

 The standard methodology of mean field games may be applied by approximating $\{x_t^{(N)}, t\ge 0\}$ by a deterministic sequence $\{z_t, t\ge 0\}$ which depends on the initial condition of the system.
One may solve the limiting optimal control problem of ${\cal A}_i$ and derive a dynamic programming equation for its value function denoted by $v_i(t,x, (z_k)_{k=0}^\infty)$, whose dependence on $t$ is due to the time-varying sequence $\{z_t, t\ge 0\}$. Subsequently one derives another equation for the mean field $\{z_t, t\ge 0\}$ by averaging the individual states across the population.
This approach,
 however, has the drawback of heavy computational load.

 \subsection{Stationary Equilibrium}

 We are interested in  a steady-state form of the
 solution of the mean field game starting with $\{z_t, t\ge 0\}$.
 Such steady state equations provide information on the long time behavior of the solution and  are of interest in their own right. They may also be used for approximation purposes to compute  strategies efficiently.
 We introduce the system
\begin{align}
&v(x) =  \min \Big[\beta \int_0^1 v( y) Q_0(dy|x) + R(x, z),  \quad \beta v(0) + R(x,z)+ \gamma\Big],  \label{dpvb0}\\
&z= \int_0^1x \mu(dx),\label{zxt0}\
\end{align}
where $\mu$ is a probability measure on ${\bf S}$.
We say $(v, z,  \mu,a^i(\cdot) )$ is  a {\it stationary equilibrium} to
\eqref{dpvb0}-\eqref{zxt0}
if i) the feedback policy $ a^i(\cdot)$, as a mapping from $\bS$ to $\{a_0, a_1\}$, is the best response with respect to $ z$
in \eqref{dpvb0}, ii) given an initial distribution of $x_0^i$,  $\{x_t^i, t\ge 0\}$ under the policy $ a^i$ has its distribution converging (under a total variation norm or only weakly) to the stationary distribution (or called limiting distribution) $ \mu$.

We may interpret $v$ as the value function of an MDP with cost
$\bar J_{i}(x_0^i,z, \nu^i)=   E \sum_{t=0}^\infty
\beta^t c(x_t^i, z, a_t^i)$.
An alternative way to interpret \eqref{dpvb0}-\eqref{zxt0} is that the initial state of ${\cal A}_i$ has been sampled according to the ``right" distribution $\mu$, and that $z$ is obtained by averaging an infinite number of such initial values by the law of large numbers \cite{S06}.
A similar solution notion is adopted in \cite{AJ15,AJW15} but
ergodicity is not part of their solution specification.

Let the probability measure  $\mu_k$ be the distribution  of
$\mathbb{R}$-valued random variable $Z_k$, $k=1, 2$.
We say $\mu_2$   stochastically dominates  $\mu_1$, and denote $\mu_1\le_{st} \mu_2$, if $\mu_2((y, \infty)) \ge \mu_1((y, \infty))$    (or equivalently,  $P(Z_2>y)\ge P(Z_1>y)$) for all $y$. It is well known \cite{MS02} that $\mu_1\le_{st} \mu_2$ if and only if
 \begin{align}
 \int \psi(y) \mu_1(dy)\le \int \psi(y) \mu_2(dy) \label{psi12}
 \end{align}
 for all increasing function $\psi$ (not necessarily strictly increasing) for which the two integrals are finite.
A stochastic kernel ${\cal Q}(B|x)$, $0\le x\le 1$, $B\in {\cal B}({\bf S}) $, is said to be strictly stochastically increasing if $\varphi(x):= \int_{\bf S}\psi (y){\cal Q}(dy|x)$ is strictly increasing in $x\in {\bf S}$ for any strictly increasing function $\psi: [0, 1]\to \mathbb{R}$ for which the integral is necessarily finite.
${\cal Q}(\cdot|x)$ is said to be weakly continuous if $\varphi$ is continuous whenever $\psi$ is continuous.

Let $\{Y_t, t\ge 0\}$ be a Markov process with state space $[0,1]$,
transition kernel $Q_0(\cdot|x)$ and initial state $Y_0=0$. So each of its trajectories is monotonically increasing. Define $\tau_{Q_0}^\theta=\inf\{t|Y_t \ge \theta\}$ for $\theta\in (0, 1)$. It is clear that  $\tau_{Q_0}^{\theta_1} \le  \tau_{Q_0}^{\theta_2}$ for $0<\theta_1<\theta_2<1$.

The following assumptions are introduced.

\begin{itemize}

\item[(A1)] \qquad $\{x_0^i, i\ge 1\}$ are i.i.d. random variables taking values in $\bS$.

\item[(A2)]\qquad $R(x,z)$ is a continuous function  on  $\bS\times \bS$.
  For each fixed $z$, $R(\cdot, z)$ is  strictly increasing.

\item[(A3)] \qquad i) $Q_0(\cdot |x)$ satisfies $Q_0([x,1]|x)=1$ for any $x$, and is
strictly  stochastically increasing; ii)
 $Q_0(dy|x)$ is weakly continuous and has a positive probability density $q(y|x)$ for each fixed $x<1$; iii) for any small $0<\delta<1$,  $\inf_xQ_0([1-\delta, 1 ]|x)>0$.

\item[(A4)]\qquad $R(x, \cdot)$ is increasing for each fixed $x$.

\item[(A5)]\qquad  $\lim_{\theta\uparrow 1}E\tau_{Q_0}^\theta =\infty$.
\end{itemize}

(A3)-iii) will be used to ensure the uniform ergodicity of the controlled Markov process.
In fact, under (A3) we can show $E\tau_{Q_0}^\theta <\infty$.
The following condition is a special case of (A3).

\begin{itemize}
\item[] (A3$^\prime$) There exists a random variable such that
$Q_0(\cdot|x)$ is equal to the law of $x+(x-1)\xi$ for some random variable $\xi$ with probability  density $f_\xi(x)>0$, a.e. $x\in \bS$.

\end{itemize}

When (A3$^\prime$) holds, we can verify (A5) by analyzing the stopping time $\tau_\xi=\inf\{t|\prod_{s=1}^t \xi_s\le 1-\theta\}$, where $\{\xi_s, s\ge 1\}$ is a sequence of i.i.d. random variables with probability density $f_\xi$.  For existence analysis of the mean field game, (A5) will be used to ensure continuity of the mean field when the threshold $\theta$ approaches $1$.

\begin{prop}
The two conditions are equivalent:

i) $\mu_1\le_{st} \mu_2$, and $\mu_1\ne \mu_2$;

ii)  $\int_{\mathbb R} \phi(y)\mu_1(dy) <\int_{\mathbb R} \phi(y) \mu_2(dy)$ for all     strictly increasing function $\phi$ for which both integrals are finite.
\end{prop}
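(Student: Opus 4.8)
The plan is to lean on the equivalence already quoted above---that $\mu_1\le_{st}\mu_2$ holds iff $\int\psi\,d\mu_1\le\int\psi\,d\mu_2$ for every weakly increasing $\psi$ with finite integrals---and to \emph{upgrade} it to the strict statement under the extra hypothesis $\mu_1\ne\mu_2$. I would treat the two implications separately.

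For (ii)$\Rightarrow$(i), I would perturb test functions. Given any weakly increasing $\psi$ with finite integrals against both measures, set $\psi_\varepsilon(y)=\psi(y)+\varepsilon\arctan(y)$, which is strictly increasing and still has finite integrals because the added term is bounded. Condition (ii) gives $\int\psi_\varepsilon\,d\mu_1<\int\psi_\varepsilon\,d\mu_2$, and letting $\varepsilon\downarrow 0$ yields $\int\psi\,d\mu_1\le\int\psi\,d\mu_2$; by the quoted characterization this is exactly $\mu_1\le_{st}\mu_2$. That $\mu_1\ne\mu_2$ is immediate, since applying (ii) to the single strictly increasing function $\phi=\arctan$ already forces a strict inequality, impossible if the measures coincided.

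The implication (i)$\Rightarrow$(ii) is the crux. Fix a strictly increasing $\phi$ with both integrals finite; the quoted result already supplies the weak inequality, so only strictness is at issue. I would invoke the monotone quantile coupling: since $\mu_1\le_{st}\mu_2$ is equivalent to $F_2\le F_1$ pointwise (with $F_i$ the CDFs), the left-continuous quantile functions satisfy $F_1^{-1}\le F_2^{-1}$, so $\tilde Z_i:=F_i^{-1}(U)$ for a single $U$ uniform on $(0,1)$ gives $\tilde Z_1\sim\mu_1$, $\tilde Z_2\sim\mu_2$ with $\tilde Z_1\le\tilde Z_2$ almost surely. Then
\[
\int\phi\,d\mu_2-\int\phi\,d\mu_1=E\big[\phi(\tilde Z_2)-\phi(\tilde Z_1)\big],
\]
and the integrand is nonnegative a.s. because $\phi$ is increasing and $\tilde Z_2\ge\tilde Z_1$.

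The strictness then follows from injectivity, which is where the hypothesis $\mu_1\ne\mu_2$ does its work---this is the step I expect to be the main obstacle, as one must convert the global statement ``$\mu_1\ne\mu_2$'' into a genuine strict gain. If the expectation above vanished, then $\phi(\tilde Z_2)=\phi(\tilde Z_1)$ a.s., and since a strictly increasing $\phi$ is injective this forces $\tilde Z_1=\tilde Z_2$ a.s., whence $\mu_1=\mu_2$, contradicting (i); hence the expectation is strictly positive and (ii) holds. An alternative I would keep in reserve is the integration-by-parts identity $\int\phi\,d(\mu_2-\mu_1)=\int(\bar F_2-\bar F_1)\,d\phi$ with $\bar F_i(y)=\mu_i((y,\infty))$: here $\bar F_2-\bar F_1\ge 0$, and $\mu_1\ne\mu_2$ yields a point $y_0$ with $\bar F_2(y_0)>\bar F_1(y_0)$, so by right-continuity the gap stays positive on some $[y_0,y_0+\delta]$ on which $d\phi$ carries positive mass (as $\phi$ is strictly increasing), again giving strict inequality. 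The delicate points of this second route---justifying the Stieltjes integration by parts under mere finiteness of the integrals and the right-continuity bookkeeping---are precisely why I would favor the coupling argument.
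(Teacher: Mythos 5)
Your proposal is correct, and the two directions split neatly: your (ii)$\Rightarrow$(i) is essentially the paper's own argument (the paper perturbs by $\epsilon y/(1+|y|)$ where you use $\epsilon\arctan(y)$ --- the same device of adding a bounded strictly increasing function and letting $\epsilon\downarrow 0$, with the same quick observation that applying (ii) to the perturbation alone forces $\mu_1\ne\mu_2$). For (i)$\Rightarrow$(ii) you take a genuinely different route. The paper argues at the level of distributions: it cites M\"uller--Stoyan's Theorem 1.2.16 to get $\phi(Z_1)\le_{st}\phi(Z_2)$, uses strict monotonicity of $\phi$ to transfer the point $y_0$ witnessing $\mu_1\ne\mu_2$ into a point $r=\phi(y_0)$ where the laws of $\phi(Z_1)$ and $\phi(Z_2)$ differ, and then invokes the rigidity result (Theorem 1.2.9: $\le_{st}$ together with equal means forces equality in distribution) to rule out $E\phi(Z_1)=E\phi(Z_2)$. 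You instead build the monotone quantile coupling $\tilde Z_i=F_i^{-1}(U)$ with $\tilde Z_1\le\tilde Z_2$ a.s.\ and extract strictness from injectivity: a nonnegative integrand $\phi(\tilde Z_2)-\phi(\tilde Z_1)$ with zero expectation (legitimate, since both integrals are finite) would vanish a.s., and injectivity of $\phi$ would force $\tilde Z_1=\tilde Z_2$ a.s., hence $\mu_1=\mu_2$. Your coupling details check out ($F_2\le F_1$ pointwise does give $F_1^{-1}\le F_2^{-1}$). What each approach buys: yours is self-contained and elementary --- it needs no external theorems and makes the strictness mechanism completely transparent --- while the paper's is shorter modulo the two citations and never leaves the language of distribution functions. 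Your reserve integration-by-parts route is, in effect, the standard proof of the cited Theorem 1.2.9, so you were right both that it works and that its Stieltjes bookkeeping is the less pleasant path.
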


\proof Assume i) holds.  By \cite[Theorem 1.2.16]{MS02}, we have
\begin{align}
\phi(Z_1)\le_{st} \phi(Z_2), \label{h1h2}
\end{align}
and so $E\phi(Z_1)\le E\phi(Z_2)$.  Since $\mu_1\ne \mu_2$, there exists $y_0$ such that $P(Z_1> y_0)\ne P(Z_2>y_0)$. Take $r$ such that $\phi(y_0)=r$. Then \begin{align}
P(\phi(Z_1)>r)\ne P(\phi(Z_2)>r). \label{pz12}
\end{align}

 If  $E\phi(Z_1) =E\phi(Z_2)$ were true, by \eqref{h1h2}  and \cite[Theorem 1.2.9]{MS02},  $\phi(Z_1)$ and $\phi(Z_2)$  would have the same distribution, which contradicts
 \eqref{pz12}. We conclude $E\phi(Z_1)<E\phi(Z_2)$, which is equivalent to ii).

Next we show ii) implies i).  Let $\psi$ be any increasing function satisfying \eqref{psi12} with two finite integrals. When ii) holds, we take $\phi_\epsilon=  \psi +\frac{\epsilon y }{1+|y|}$, $\epsilon>0$. Then $\int \phi_\epsilon \mu_1(dy) < \int \phi_\epsilon \mu_2(dy)$ holds for all $\epsilon >0$.
  Letting $\epsilon \to 0$, then \eqref{psi12} follows and $\mu_1\le_{st} \mu_2$. It is clear $\mu_1\ne \mu_2$. \qed

\section{Best Response}
\label{sec:br}

For this section we assume (A1)-(A3).
 We take  any fixed $z\in [0,1]$ and consider \eqref{dpvb0} as a separate equation, which is rewritten below:
\begin{align}
v(x)= \min \Big\{
\beta\int_0^1 v (y)Q_0(dy|x) +R(x,z), \quad \beta v(0) +R(x, z)+\gamma
\Big\}. \label{dpb}
\end{align}
Here $z$  is not required to satisfy \eqref{zxt0}.
In relation to the mean field game, the resulting optimal policy will be called the
best response with respect to $z$.
Denote $G(x)= \int_0^1 v( y) Q_0(dy|x)$.

\begin{lemma} \label{lemma:LV}
i) Equation \eqref{dpb} has a unique solution $v\in C([0,1], \mathbb{R})$.

ii) $v$ is strictly increasing.

iii) The optimal policy is determined as follows:

\quad a) If $\beta G(1)< \beta v(0)+\gamma$, $a^i(x)\equiv a_0$.

\quad b) If $\beta G(1)= \beta v(0)+\gamma$, $a^i(1)= a_1$ and $a^i(x)= a_0$ for $x<1$.

\quad c) If $\beta G(0)\ge \beta v(0)+\gamma$,    $a^i(x) \equiv a_1$.

\quad d) If $\beta G(0)< \beta v(0)+\gamma <\rho G(1)$, there exists a unique $x^*\in (0, 1)$ and $a^i$ is a threshold policy with parameter $x^*$, i.e., $a^i(x)=a_1$ if $x\ge x^*$ and $a^i(x)=a_0$ if $x<x^*$.
\end{lemma}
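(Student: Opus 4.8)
The plan is to treat \eqref{dpb} as the fixed-point equation $v=Tv$ for the Bellman operator
\begin{align}
(Tw)(x) = \min\Bigl\{ \beta\int_0^1 w(y)\,Q_0(dy|x) + R(x,z),\ \beta w(0) + R(x,z) + \gamma \Bigr\}, \notag
\end{align}
and to prove the three claims in turn. For part i), I would first check that $T$ maps $C([0,1],\mathbb{R})$ into itself: $x\mapsto\int_0^1 w(y)\,Q_0(dy|x)$ is continuous by the weak continuity in (A3)-ii), $R(\cdot,z)$ is continuous by (A2), and the pointwise minimum of continuous functions is continuous. I would then show $T$ is a $\beta$-contraction in the supremum norm, using $|\min\{a_1,b_1\}-\min\{a_2,b_2\}|\le\max\{|a_1-a_2|,|b_1-b_2|\}$ together with $|\int(w_1-w_2)\,dQ_0(\cdot|x)|\le\|w_1-w_2\|_\infty$ and $|w_1(0)-w_2(0)|\le\|w_1-w_2\|_\infty$, which gives $\|Tw_1-Tw_2\|_\infty\le\beta\|w_1-w_2\|_\infty$. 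Since $\beta<1$ and $C([0,1],\mathbb{R})$ is complete, the Banach fixed-point theorem yields a unique continuous $v$.

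For part ii), I would argue in two steps, since strict monotonicity is not preserved under uniform limits. First, $T$ maps the closed cone of (weakly) increasing continuous functions into itself; the only delicate point is that strict stochastic monotonicity of $Q_0$ in (A3)-i) implies ordinary stochastic monotonicity, which follows by applying it to the strictly increasing perturbation $\psi+\epsilon y$ and letting $\epsilon\downarrow 0$. Hence for increasing $w$ both branches of the minimum are increasing, so $Tw$ is increasing, and the fixed point $v$ is therefore increasing. I would then bootstrap to strictness: with $v$ increasing, the first branch $\beta G(x)+R(x,z)$ is an increasing term plus the strictly increasing $R(\cdot,z)$, the second branch $\beta v(0)+R(x,z)+\gamma$ is strictly increasing, and the minimum of two strictly increasing functions is strictly increasing, so $v=Tv$ is strictly increasing.

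For part iii), the key observation is that the common term $R(x,z)$ cancels when the two branches are compared, so $a_1$ is optimal precisely when $h(x):=\beta G(x)-\beta v(0)-\gamma\ge 0$ and $a_0$ when $h(x)<0$. Here $h$ is continuous (weak continuity of $Q_0$ and continuity of $v$) and strictly increasing (since $v$ is now strictly increasing and $Q_0$ is strictly stochastically increasing). The four cases then reduce to a sign analysis of $h$: if $h(1)<0$ then $h<0$ throughout (a); if $h(1)=0$ then $h<0$ on $[0,1)$ with equality only at $x=1$, where the tie is resolved in favour of $a_1$ (b); if $h(0)\ge 0$ then $h\ge 0$ throughout (c); and if $h(0)<0<h(1)$ then by continuity and strict monotonicity there is a unique $x^*\in(0,1)$ with $h(x^*)=0$, yielding the threshold policy (d).

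I expect the main obstacle to be the strict monotonicity in part ii): it cannot be read off directly from a fixed-point argument because the strictly increasing functions are not closed under uniform convergence, which forces the two-step route of first securing ordinary monotonicity on a closed cone and then upgrading it using the strict monotonicity of $R(\cdot,z)$ and of the minimum of strictly increasing functions. The supporting fact that strict stochastic monotonicity implies ordinary stochastic monotonicity, while routine, is the technical hinge that makes the preservation step go through, and the cancellation of $R(x,z)$ in part iii) is what reduces the policy structure to a clean one-dimensional threshold analysis.
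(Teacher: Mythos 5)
Your proposal is correct and follows essentially the same route as the paper, which defines the dynamic programming operator, invokes a contraction/successive-approximation argument ($g_0=0$, $g_{k+1}={\cal L}g_k$) to get an increasing fixed point, and then bootstraps to strict monotonicity of $v$ (and hence of $G$, via strict stochastic monotonicity of $Q_0$) exactly as you do, before reading off the threshold structure from the sign of $\beta G(x)-\beta v(0)-\gamma$. The paper only sketches these steps by reference to \cite{HM16Chen} and \cite[Lemma 6]{HM17}; your write-up fills in the same details, including the observation (used in the paper's Proposition 1) that strict stochastic monotonicity yields ordinary monotonicity by an $\epsilon$-perturbation.
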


\begin{proof}
Define the dynamic programming operator
\begin{align}
({\cal L} g)(x)=\min \Big\{
\beta\int_0^1 g (y)Q_0(dy|x) +R(x,z), \quad \beta g(0) +R(x, z)+\gamma
\Big\},\label{dpo}
\end{align}
which is from $ C([0,1], \mathbb{R})$ to itself.
 The proving method in \cite{HM16Chen}, \cite[Lemma 6]{HM17},   which assumed (A3$^\prime$), can be
extended to the present equation \eqref{dpb}
in a straightforward manner.

 In particular, for the proof of ii) and iii), we  obtain progressively  stronger properties of $v$ and $G$.
First, denoting $g_0=0$ and $g_{k+1}={\cal L} g_k$ for $k\ge 0$,
  we  use a successive approximation procedure to show that
$v$ is increasing, which implies that $G$ is continuous and increasing by weak continuity and  monotonicity of $Q_0$. Since $R$ is strictly increasing in $x$, by the right hand side of \eqref{dpb}, we show that $v$ is strictly increasing, which implies the same property for $G$ by strict monotonicity of $Q_0$.~\qed
\end{proof}

For the optimal policy specified in part iii) of Lemma \ref{lemma:LV}, we can
formally denote the threshold parameters for the corresponding cases: a) $\theta=1^+$, b) $\theta=1$, c) $\theta=0$, and d) $\theta=x^*$.
Such a policy will be called a $\theta$-threshold policy.
We give the condition for $\theta=0$ in the best response.

\begin{lemma}
For $\gamma>0$ and $v$ solving \eqref{dpb},
  \begin{align}
  \beta G(0)\ge \beta v(0)+\gamma \label{bgv0}
  \end{align}
  holds  if and only if
\begin{align}
\gamma\le \beta \int_0^1 R(y,z)Q_0(dy|0) - \beta R(0, z).\label{gaub}
\end{align}
\end{lemma}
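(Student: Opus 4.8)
The plan is to prove the two implications separately, in each case exploiting the fact established in Lemma~\ref{lemma:LV} that $G$ is increasing, together with the explicit behaviour of the solution $v$ at the state $x=0$. Throughout I will use that $Q_0(\cdot|0)$ is a probability measure, so that constants integrate to themselves against $Q_0(dy|0)$, and I abbreviate $\bar R=\int_0^1 R(y,z)Q_0(dy|0)$, so that the target inequality \eqref{gaub} reads $\gamma\le\beta\bar R-\beta R(0,z)$.

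For the direction \eqref{bgv0}$\,\Rightarrow\,$\eqref{gaub}, first I would note that since $G$ is increasing, \eqref{bgv0} gives $\beta G(x)\ge\beta G(0)\ge\beta v(0)+\gamma$ for every $x$; hence in the minimum defining \eqref{dpb} the second branch is always the smaller one, which forces $v(x)=\beta v(0)+R(x,z)+\gamma$ for all $x$ (this is precisely case~c) of Lemma~\ref{lemma:LV}). Evaluating at $x=0$ yields $v(0)=(R(0,z)+\gamma)/(1-\beta)$, and integrating the closed form of $v$ against $Q_0(dy|0)$ gives $G(0)=\beta v(0)+\bar R+\gamma$. Substituting this $G(0)$ into \eqref{bgv0} and simplifying with the value of $v(0)$ collapses, after the $(1-\beta)$ factors cancel, to $\beta\bar R\ge\beta R(0,z)+\gamma$, which is exactly \eqref{gaub}.

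For the converse I would argue by contraposition: assume \eqref{bgv0} fails, i.e.\ $\beta G(0)<\beta v(0)+\gamma$. Then at $x=0$ the first branch of \eqref{dpb} is strictly smaller, so $v(0)=\beta G(0)+R(0,z)$. The key step is a pointwise lower bound on the solution: for every $y$, both branches of \eqref{dpb} are at least $\beta G(0)+R(y,z)$ --- the first because $G(y)\ge G(0)$, the second because $\beta v(0)+\gamma>\beta G(0)$ by the assumed failure of \eqref{bgv0} --- so that $v(y)\ge\beta G(0)+R(y,z)$. Integrating this bound against $Q_0(dy|0)$ produces $G(0)\ge\beta G(0)+\bar R$, i.e.\ $(1-\beta)G(0)\ge\bar R$. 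Finally, feeding $v(0)=\beta G(0)+R(0,z)$ back into the strict inequality $\beta v(0)+\gamma>\beta G(0)$ gives $\gamma>\beta(1-\beta)G(0)-\beta R(0,z)$, and combining with $(1-\beta)G(0)\ge\bar R$ yields $\gamma>\beta\bar R-\beta R(0,z)$, the negation of \eqref{gaub}.

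I expect the only delicate point to be the converse, specifically the pointwise lower bound $v(y)\ge\beta G(0)+R(y,z)$: it must draw on both the monotonicity of $G$ and the strict form of the assumed failure of \eqref{bgv0}, and one has to check that the strictness survives through the integration and the final combination so that exactly the (strict) negation of \eqref{gaub} is recovered. The forward direction, by contrast, is a routine substitution once the closed form $v(x)=\beta v(0)+R(x,z)+\gamma$ is in hand.
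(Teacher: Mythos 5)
Your proof is correct. The forward direction coincides with the paper's necessity argument: both exploit monotonicity of $G$ to conclude that the second branch of \eqref{dpb} is active everywhere, reduce to $v(x)=\beta v(0)+R(x,z)+\gamma$, solve $v(0)=\frac{1}{1-\beta}[R(0,z)+\gamma]$, and substitute back into \eqref{bgv0}. Where you genuinely diverge is the converse. The paper argues directly: given \eqref{gaub}, it \emph{constructs} a candidate from the closed form \eqref{vso}, verifies \eqref{vg} and \eqref{vgge}, and invokes uniqueness of the solution to \eqref{dpb} (Lemma~\ref{lemma:LV}) to identify the candidate with $v$, whence \eqref{bgv0} holds. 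You instead argue by contraposition, working with the actual solution $v$ throughout: from $\beta G(0)<\beta v(0)+\gamma$ you extract $v(0)=\beta G(0)+R(0,z)$, establish the pointwise bound $v(y)\ge \beta G(0)+R(y,z)$ (first branch via $G(y)\ge G(0)$, second via the negated hypothesis), integrate against $Q_0(dy|0)$ to obtain $(1-\beta)G(0)\ge \int_0^1 R(y,z)\,Q_0(dy|0)$, and combine to get the strict negation $\gamma>\beta\int_0^1 R(y,z)\,Q_0(dy|0)-\beta R(0,z)$. I verified the algebra in both directions, and the strictness does survive as you anticipated: the strict inequality enters through $\beta v(0)+\gamma>\beta G(0)$, while the integrated bound is only multiplied by $\beta>0$ before the two are chained. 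The trade-off between the two routes: the paper's construct-and-verify argument is shorter once well-posedness is granted and delivers the explicit solution as a by-product; your contrapositive never needs to construct or verify a candidate --- in particular you never have to describe the optimal policy when \eqref{bgv0} fails, where the minimum in \eqref{dpb} can switch branches at a nontrivial threshold --- at the price of the lower-bound-and-integrate step. Both rest on the same ingredients from Lemma~\ref{lemma:LV}, namely existence/uniqueness for \eqref{dpb} and monotonicity of $G$ (your argument only needs the non-strict version).
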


\proof We show necessity first.
Suppose \eqref{bgv0} holds. Note that $G(x)$ is strictly increasing on $[0, 1]$.  Equation \eqref{dpb} reduces to
\begin{align}
&v(x)= \beta v(0)+R(x, z)+\gamma, \label{vg} \\
& \beta G(x)\ge \beta v(0) +\gamma,\quad  \forall x.  \label{vgge}
\end{align}
From \eqref{vg}, we uniquely solve
\begin{align}
v(0)= \frac{1}{1-\beta} [R(0,z)+\gamma], \quad v(x)=
\frac{\beta}{1-\beta} [R(0,z)+\gamma] +R(x, z)+\gamma, \label{vso}
\end{align}
which combined with \eqref{vgge} implies \eqref{gaub}.

We continue to show sufficiency. If $\gamma>0$ satisfies \eqref{gaub}, we use \eqref{vso} to construct $v$ and verify \eqref{vg} and \eqref{vgge}. So $v$ is the unique solution of \eqref{dpb} satisfying   \eqref{bgv0}.   \qed

The next lemma gives the condition for $\theta=1^+$
in the best response.

\begin{lemma}
For $\gamma >0$ and $v$ solving \eqref{dpb}, we have
\begin{align}
\beta G(1)< \beta v(0)+\gamma  \label{bg1v}
\end{align}
if and only if
\begin{align}
\gamma > \beta [V_\beta(1)-V_\beta(0)],  \label{glb}
\end{align}
where $V_\beta(x)\in C([0,1], \mathbb{R})$ is the unique solution of
\begin{align}  V_\beta(x)= \beta\int_0^1 V_\beta (y)Q_0(dy|x) +R(x,z). \label{ven}
\end{align}
\end{lemma}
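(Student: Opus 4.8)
The plan is to recognize $V_\beta$ as the value function of the ``always take $a_0$'' (inaction) policy and to show that condition \eqref{bg1v} says exactly that inaction is optimal at every state. The bridge between the two displayed inequalities is the degeneracy of $Q_0$ at the top state: setting $x=1$ in (A3)-i) gives $Q_0(\{1\}\,|\,1)=Q_0([1,1]\,|\,1)=1$, so $Q_0(\cdot\,|\,1)=\delta_1$ and hence $G(1)=\int_0^1 v(y)Q_0(dy|1)=v(1)$ for any solution $v$. This single observation is what converts $\beta G(1)$ into $\beta V_\beta(1)$ and matches the constant $\beta[V_\beta(1)-V_\beta(0)]$ on the right of \eqref{glb}.

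For necessity, suppose \eqref{bg1v} holds. Since $G$ is increasing (Lemma \ref{lemma:LV}), we get $\beta G(x)\le \beta G(1)<\beta v(0)+\gamma$ for every $x$, so the inaction branch attains the minimum in \eqref{dpb} at all $x$, and \eqref{dpb} collapses to $v(x)=\beta\int_0^1 v(y)Q_0(dy|x)+R(x,z)$, which is exactly \eqref{ven}. By uniqueness of the solution of \eqref{ven} we conclude $v=V_\beta$; in particular $v(0)=V_\beta(0)$, and using $G(1)=v(1)=V_\beta(1)$ the inequality \eqref{bg1v} reads $\beta V_\beta(1)<\beta V_\beta(0)+\gamma$, i.e. \eqref{glb}.

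For sufficiency, I would run the argument in reverse by verifying that $V_\beta$ itself solves the full equation \eqref{dpb}. First, a successive-approximation argument as in Lemma \ref{lemma:LV} (value iteration on \eqref{ven}, using monotonicity of $Q_0$ and strict monotonicity of $R(\cdot,z)$) shows $V_\beta$ is increasing, so $\widetilde G(x):=\int_0^1 V_\beta(y)Q_0(dy|x)$ is increasing with $\widetilde G(1)=V_\beta(1)$. Assuming \eqref{glb}, we then have, for every $x$, $\beta\widetilde G(x)\le\beta\widetilde G(1)=\beta V_\beta(1)<\beta V_\beta(0)+\gamma$, so in \eqref{dpb} (with $v$ replaced by $V_\beta$) the inaction branch is always the smaller one and $V_\beta$ satisfies \eqref{dpb}. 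By the uniqueness part of Lemma \ref{lemma:LV}, $v=V_\beta$ and $G=\widetilde G$; evaluating at $x=1$ gives $\beta G(1)=\beta V_\beta(1)<\beta V_\beta(0)+\gamma=\beta v(0)+\gamma$, which is \eqref{bg1v}.

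I do not expect a genuine obstacle here: the substantive content is entirely the degeneracy $Q_0(\cdot\,|\,1)=\delta_1$, which turns $G(1)$ into $v(1)=V_\beta(1)$; the remainder is the standard chain ``\eqref{bg1v} $\Leftrightarrow$ inaction is everywhere optimal $\Leftrightarrow$ $v=V_\beta$,'' obtained from the monotonicity of $G$ and the uniqueness of solutions to \eqref{dpb} and \eqref{ven}. The only point demanding a little care is invoking monotonicity of $V_\beta$ (and hence of $\widetilde G$) cleanly, which is handled by the same value-iteration scheme already used to establish Lemma \ref{lemma:LV}.
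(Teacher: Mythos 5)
Your proposal is correct and follows essentially the same route as the paper: necessity via monotonicity of $G$ forcing the inaction branch everywhere so that $v=V_\beta$, and sufficiency via verifying that $V_\beta$ itself satisfies \eqref{dpb} through the bound $\beta\int_0^1 V_\beta(y)\,Q_0(dy|x)\le\beta V_\beta(1)<\beta V_\beta(0)+\gamma$. Your one addition is making explicit the degeneracy $Q_0(\cdot\,|\,1)=\delta_1$ from (A3)-i), which the paper uses only implicitly in the step ``so \eqref{glb} follows'' (indeed $G(1)=v(1)$ is genuinely needed there), so spelling it out is a welcome clarification rather than a departure.
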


\proof
By Banach's fixed point theorem, we can show that \eqref{ven} has a unique solution. Next, by a successive approximation $ \{V_\beta^{(k)}, k\ge 0 \}$ with $V_\beta^{(0)}=0$ in the fixed point equation, we can further show
that $V_\beta$ is strictly increasing. Moreover, $\int_0^1 V_\beta(y) Q_0(dy|x) $ is increasing in $x$ by monotonicity of $Q_0$.

We show necessity. Since $G$ is strictly increasing, \eqref{bg1v} implies that the right hand side of \eqref{dpb} now reduces to the first term within the parentheses and that $v=V_\beta$. So \eqref{glb} follows.

To show sufficiency, suppose \eqref{glb}  holds.  We have
$$
\beta\int_0^1 V_\beta(y) Q_0(dy|x)\le \beta V_\beta(1) <\beta V_\beta(0)+\gamma, \quad \forall x.
$$
Therefore, $v:= V_\beta$ gives the unique solution of  \eqref{dpb} and $\beta G(1)< \beta v(0)+\gamma$. \qed

\begin{example}
Let $R(x, z)= x(c+z)$, where $c>0$.  Take $Q_0(\cdot|x)$ as uniform distribution on $[x, 1]$.
Then \eqref{ven} reduces to
$$
V_\beta(x)= \frac{\beta}{1-x} \int_x^1V_\beta(y)dy +R(x, z).
$$
Define $\phi(x)= \int_x^1 V_\beta(y)dy$, $x\in [0, 1]$.
Then $\phi'(x)= -\frac{\beta}{1-x}\phi(x) -R(x, z)$ holds and we solve
$$
\phi(x)= (1-x)^\beta \int_x^1 \frac{R(s, z)}{(1-s)^\beta} ds,
$$
where the right hand side converges to 0 as $x\to 1^-$.
We further obtain
$$
V_\beta(x)= \beta (1-x)^{\beta-1}\int_x^1 \frac{R(s, z)}{(1-s)^\beta} ds +R(x, z)
$$
for $x\in [0, 1)$, and the right hand side has the limit $\frac{R(1,z)}{1-\beta}$
as $x\to 1^-$. This gives a well defined $V_\beta \in C([0, 1], \mathbb{R})$.
Therefore,
$
V_\beta(0)= \frac{\beta (c+z)}{(1-\beta)(2-\beta)}.
$
Then \eqref{glb} reduces to
$
\gamma > \frac{2\beta(c+z)}{ 2-\beta  }.
$
\end{example}

\section{Existence of  Stationary Equilibria}
\label{sec:se}

Assume (A1)-(A5) for this section.
Define the class ${\cal P}_0$ of probability measures  on $\bS$ as follows:
 $\nu\in {\cal P}_0$ if
there exist a constant $c_\nu\ge  0$ and a Borel measurable function $g(x)\ge 0$ defined on $[0,1]$ such that
$$
\nu(B) = \int_B g(x) dx +c_\nu 1_B(0),
$$
where $B\in {\cal B}(\bS)$ and $1_B$ is the indicator function of $B$.
When restricted to $(0, 1]$, $\nu$ is absolutely continuous with respect to the Lebesgue measure $\mu^{\rm Leb}$.

Let $X$ be a random variable with distribution $\nu\in {\cal P}_0$.
Set $x_t^i=X
$. Define $Y_0=x_{t+1}^i$ by applying $a_t^i\equiv a_0$.
Further define $Y_1=x_{t+1}^i$ by applying the $r$-threshold policy $a_t^i$ with  $r\in (0, 1)$.

\begin{lemma}  \label{lemma:pi0}
The distribution $\nu_i$ of  $Y_i$ is in ${\cal P}_0$ for $i=0,1$.
\end{lemma}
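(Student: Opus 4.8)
The plan is to compute the laws $\nu_0$ and $\nu_1$ explicitly as one‑step pushforwards of $\nu$ under the prescribed actions, and then read off the required decomposition directly. Write $\nu(dx)=g(x)\,dx+c_\nu\,\delta_0(dx)$ with $g\ge 0$ Borel measurable and $c_\nu\ge 0$ as in the definition of ${\cal P}_0$, where $\delta_0$ is the unit mass at $0$. Under inaction, \eqref{xa0} makes the law of $Y_0$ the mixture
\begin{align*}
\nu_0(B)=\int_{\bS}Q_0(B|x)\,\nu(dx)=\int_0^1 Q_0(B|x)g(x)\,dx+c_\nu Q_0(B|0),\quad B\in{\cal B}(\bS).
\end{align*}
Under the $r$-threshold policy the action is $a_1$ on $\{x\ge r\}$ and $a_0$ on $\{x<r\}$, so combining \eqref{xa0}--\eqref{xa1} gives
\begin{align*}
\nu_1(B)=\int_{[0,r)}Q_0(B|x)\,\nu(dx)+\nu([r,1])\,1_B(0).
\end{align*}

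First I would treat $\nu_0$. For every $x<1$, assumption (A3)-ii) provides the density $q(\cdot|x)$ with $Q_0(B|x)=\int_B q(y|x)\,dy$; since $\{1\}$ is Lebesgue-null, the single fibre $x=1$ (where $Q_0(\cdot|1)=\delta_1$ carries no density) contributes nothing to $\int_0^1 Q_0(B|x)g(x)\,dx$. Because all integrands are nonnegative, Tonelli's theorem justifies interchanging the order of integration to obtain
\begin{align*}
\int_0^1 Q_0(B|x)g(x)\,dx=\int_B\Big(\int_0^1 q(y|x)g(x)\,dx\Big)\,dy.
\end{align*}
The atomic part maps to $c_\nu Q_0(B|0)=c_\nu\int_B q(y|0)\,dy$, again absolutely continuous because $0<1$. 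Hence $\nu_0(B)=\int_B \tilde g(y)\,dy$ with $\tilde g(y)=\int_0^1 q(y|x)g(x)\,dx+c_\nu q(y|0)\ge 0$ Borel measurable, so $\nu_0\in{\cal P}_0$; in fact $c_{\nu_0}=0$, since the monotone (upward) transitions leave no atom and $\nu$ has no mass at $1$.

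The argument for $\nu_1$ reuses this computation on the set $[0,r)$. Since $r>0$, the atom of $\nu$ at $0$ lies in $[0,r)$, so exactly as above the first term equals $\int_B\tilde g_1(y)\,dy$ with $\tilde g_1(y)=\int_0^r q(y|x)g(x)\,dx+c_\nu q(y|0)\ge 0$, while the second term is a point mass at $0$ of weight $c_{\nu_1}:=\nu([r,1])\ge 0$. Thus $\nu_1(B)=\int_B\tilde g_1(y)\,dy+c_{\nu_1}1_B(0)$, precisely the form defining ${\cal P}_0$. I expect the difficulty here to be bookkeeping rather than depth: the two delicate points are justifying the interchange of integration (handled by nonnegativity and Tonelli) and correctly isolating the degenerate fibre $x=1$, where $Q_0(\cdot|1)$ has no density yet contributes nothing because it sits on a Lebesgue-null set. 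The one genuinely structural observation is that the atom at $0$ is created solely by the active action $a_1$, so its mass is exactly $\nu([r,1])$, which is where the threshold $r$ enters the conclusion.
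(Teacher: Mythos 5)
Your proposal is correct and follows essentially the same route as the paper: both compute the one-step pushforward of $\nu$, identify the absolutely continuous part via the density $q(y|x)$ (your $\tilde g_1(y)=\int_0^r q(y|x)g(x)\,dx+c_\nu q(y|0)$ is exactly the paper's $g_1(y)=\int_{0\le x<y\wedge r}q(y|x)\,\nu(dx)$, since $q(y|x)=0$ for $y<x$), and isolate the atom at $0$ of mass $\nu([r,1])=P(X\ge r)$ created by the action $a_1$. Your extra care with Tonelli and the Lebesgue-null fibre $x=1$ merely makes explicit what the paper leaves as ``it can be checked.''
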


\begin{proof}
Let $q(y|x)$ denote the density function of $Q_0(\cdot|x)$ for $x\in [0, 1)$, where $q(y|x)=0$ for $y<x$.  Denote
$$
g_0(y)= \int_{0\le x<y} q(y|x) \nu(dx), \quad y\in (0, 1),
$$
and
$$
g_1(y)= \int_{0\le x< y\wedge r} q(y|x) \nu (dx), \quad y\in (0, 1).
$$
Then it can be checked that
$$
P(Y_0\in B)=\int_B g_0(y) dy , \quad P(Y_1\in B)= \int_B g_1(y) dy +P(X\ge r) 1_B(0).
$$
This completes the lemma. \qed
\end{proof}

In order to show that \eqref{dpvb0}-\eqref{zxt0} has a solution, we define a mapping $\Gamma$: ${\bf S}\to {\bf S}$ by the following rule.
For $z\in [0,1]$, we solve \eqref{dpvb0} to obtain a well defined threshold
$\theta(z)\in [0, 1]\cup \{1^+\}$, which in turn determines a limiting distribution $\mu_{\theta(z)}$  of the closed-loop state process $x_t^i$
  by Lemma \ref{lemma:limd}. Define
 $$
 \Gamma (z)= \int_0^1 x \mu_{\theta(z)}(dx).
 $$
 If $\Gamma$ has a fixed point, we obtain a solution to \eqref{dpvb0}-\eqref{zxt0}.

 We analyze the case where the best response gives a strictly positive threshold.
Assume
\begin{align}
\gamma>\beta\max_{z\in [0,1]} \int_0^1 [R(y,z)-R(0,z)] Q_0(dy|0). \label{ggb}
\end{align}
Note that under  a zero threshold policy,
the behavior of the state process is sensitive to a positive perturbation of the threshold.
 The above condition ensures that  the zero threshold will not occur, and this will  ensure continuity of $\Gamma$ to  facilitate the fixed point analysis.

\begin{lemma} \label{lemma:Gam} Assume \eqref{ggb}.
Then $\Gamma(z)$ is continuous on $[0,1]$.
\end{lemma}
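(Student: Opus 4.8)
The plan is to factor $\Gamma$ through the two intermediate maps appearing in its definition and to prove continuity of each link. Write $\Gamma = M\circ D\circ T$, where $T\colon z\mapsto \theta(z)$ is the best-response threshold, $D\colon \theta\mapsto \mu_\theta$ is the limiting distribution supplied by Lemma~\ref{lemma:limd}, and $M\colon \mu\mapsto\int_0^1 x\,\mu(dx)$ is the mean functional. Since $x\mapsto x$ is bounded and continuous on the compact interval $[0,1]$, the map $M$ is continuous for weak convergence, so it suffices to establish that $T$ is continuous on $[0,1]$ and that $D$ is weakly continuous along the range of $T$.

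For the threshold map $T$, I would first show that $z\mapsto v_z$, the solution of \eqref{dpb}, is continuous in the sup norm. The operator ${\cal L}$ in \eqref{dpo} is a $\beta$-contraction on $C([0,1],\mathbb R)$ whose sole $z$-dependence enters through the additive continuous term $R(\cdot,z)$, so $\|{\cal L}_z g-{\cal L}_{z'}g\|_\infty\le \sup_x|R(x,z)-R(x,z')|$, and the standard perturbation bound for contraction fixed points gives $\|v_z-v_{z'}\|_\infty\le (1-\beta)^{-1}\sup_x|R(x,z)-R(x,z')|$, which tends to $0$ as $z'\to z$ by the uniform continuity of $R$ on $[0,1]^2$ (A2). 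Consequently $G_z(x)=\int_0^1 v_z(y)Q_0(dy|x)$ obeys $|G_z(x)-G_{z'}(x)|\le\|v_z-v_{z'}\|_\infty$ uniformly in $x$ and is continuous in $x$ by the weak continuity of $Q_0$ in part~iii) of (A3), hence $(x,z)\mapsto G_z(x)$ is jointly continuous. In the interior regime (case d of Lemma~\ref{lemma:LV}) the threshold $\theta(z)$ is the unique root of $\beta G_z(x)=\beta v_z(0)+\gamma$; since $G_z$ is strictly increasing by Lemma~\ref{lemma:LV} and jointly continuous, this root varies continuously with $z$ by monotone inversion. Assumption \eqref{ggb} makes the criterion \eqref{gaub} fail for every $z$, so condition \eqref{bgv0} never holds and the degenerate value $\theta=0$ is excluded uniformly; the remaining transitions to $\theta=1$ and $\theta=1^+$ are monitored through the sign of the continuous quantity $\beta G_z(1)-\beta v_z(0)-\gamma$.

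For the distribution map $D$, I would use the regenerative description of $\mu_\theta$ underlying Lemma~\ref{lemma:limd}: under a $\theta$-threshold policy the state restarts at $0$, climbs according to $Q_0$, and is reset to $0$ at the first epoch it enters $[\theta,1]$, so $\mu_\theta$ is the normalized expected occupation measure of a single cycle, with cycle length governed by $\tau_{Q_0}^\theta$. On any compact subinterval of $(0,1)$, weak continuity of $\theta\mapsto\mu_\theta$ follows from the weak continuity and positive-density properties of $Q_0$ in (A3) together with the finiteness $E\tau_{Q_0}^\theta<\infty$ recorded after (A5): both the cycle-occupation integrals against a fixed continuous test function and the normalizing constant $E\tau_{Q_0}^\theta$ depend continuously on $\theta$. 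The decisive endpoint is $\theta\uparrow 1$: by (A5) the cycle length $E\tau_{Q_0}^\theta$ diverges, so the long-run fraction of time spent below any level $1-\delta$ vanishes and $\mu_\theta$ converges weakly to the point mass $\delta_1$; this matches the distribution $\mu_{1^+}=\delta_1$ assigned to case a, where by part~iii) of (A3) the uncontrolled state is carried into every neighborhood of $1$. Thus $D$ glues continuously across $\theta\in(0,1)$, $\theta=1$ and $\theta=1^+$, and composing with $M$ and $T$ yields continuity of $\Gamma$.

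The step I expect to be the \emph{main obstacle} is precisely this upper-boundary gluing. Away from $\theta=1$ the continuity of $T$ and $D$ is a fairly routine consequence of the contraction estimate and the weak continuity of $Q_0$; the delicate point is to prove that the limiting distribution tends to the point mass at $1$ as the regeneration cycle blows up, and that this limit coincides with the distribution produced when no effort is ever exerted. Assumption (A5) is exactly what forces $E\tau_{Q_0}^\theta\to\infty$ and thereby precludes a jump of $\Gamma$ as the threshold crosses into the $\theta=1^+$ regime, while \eqref{ggb} disposes of the complementary pathology at $\theta=0$.
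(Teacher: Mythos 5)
Your proposal is correct, and while it shares the paper's key ingredients, it organizes the argument along a genuinely different middle route. The endpoints agree with the paper: you use \eqref{ggb} through the characterization \eqref{gaub} to exclude $\theta=0$, and at $\theta\uparrow 1$ you and the paper invoke (A5) through the very same regenerative-cycle quantity (the paper cites the right-hand side of \eqref{lrun} to lower-bound $\int_0^1 x\,\pi_\theta(dx)$; your bound $\mu_\theta([0,1-\delta))\le E\tau_{Q_0}^{1-\delta}/(1+E\tau_{Q_0}^{\theta})$ is that estimate in disguise), so your ``main obstacle'' paragraph matches the paper's cases ii)--iii) exactly. The real divergence is the continuity of $\theta\mapsto\mu_\theta$ on $(0,1)$: the paper proves the stronger total-variation continuity (Lemma \ref{lemma:sipi}) via a uniform-ergodicity coupling resting on the minorization (A3)-iii) and Lemma \ref{lemma:erg}, whereas you prove only weak continuity from the cycle-occupation representation $\mu_\theta(f)=E\sum_{t=0}^{\tau}f(Y_t)/(1+E\tau)$ --- weaker but sufficient, since $\Gamma$ integrates the single continuous test function $x\mapsto x$. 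Your route is more elementary (no kernel-iteration coupling); the paper's yields a reusable stronger conclusion. Likewise, your explicit contraction bound $\|v_z-v_{z'}\|_\infty\le(1-\beta)^{-1}\sup_x|R(x,z)-R(x,z')|$ and the monotone-inversion of the strictly increasing $G_z$ make precise what the paper compresses into ``continuous dependence of the solution of the dynamic programming equation on $z$'' in its case i). Two repairs are needed to make your sketch complete: first, the continuity in $\theta$ of $E\tau_{Q_0}^{\theta}$ and of the cycle-occupation integrals, which you assert, requires an argument --- monotone trajectories give $\tau_{Q_0}^{\theta'}\to\tau_{Q_0}^{\theta}$ a.s.\ as $\theta'\to\theta$ because the crossing value and the pre-crossing maximum are atomless by the positive density in (A3)-ii), and domination by $\tau_{Q_0}^{\theta''}$ for a fixed $\theta''>\theta$ (finite expectation under (A3)) lets you pass to expectations; second, a small misattribution: weak continuity of $Q_0$ is (A3)-ii), not (A3)-iii), the latter being the minorization condition you correctly deploy only for the $\delta_1$ limit. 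Neither point undermines the argument.
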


\proof Let $z_0\in [0, 1]$ be fixed, giving a corresponding threshold parameter   $\theta_0$ when \eqref{dpb} is solved using $z_0$.    We check continuity at $z_0$ and
 consider 3 cases.

Case i) $\theta_0\in (0,1)$.  Let $\pi_0$ be the
stationary distribution with the $\theta_0$-threshold  policy.  Consider any fixed $\epsilon>0$.
There exists $\epsilon_1$ such that for all $\theta\in (\theta_0-\epsilon_1, \theta_0+\epsilon_1)\subset (0, 1)$, $|\int_0^1 x\pi(dx)-\int_0^1 x\pi_0(dx)|
<\epsilon$, where $\pi$ is the stationary distribution associated with $\theta$. This follows since $\lim_{\theta\to \theta_0}\|\pi-\pi_0\|_{\rm TV}=0$ by Lemma \ref{lemma:sipi}.
 Now by the continuous dependence of the solution of the dynamic programming equation on $z$, we can select a sufficiently small $\delta>0$ such that for all $|z-z_0|<\delta$, $z$ generates a threshold parameter $\theta\in (\theta_0-\epsilon_1, \theta_0+\epsilon_1)$, which implies $
 |\Gamma (z)-\Gamma(z_0)|\le \epsilon$.

Case ii) $z_0$ gives $\theta_0=1$. Then $\Gamma (z_0)=1$.
Fix any $\epsilon>0$. Then we can show there exists $\epsilon_1$ such that  for all $\theta\in (1-\epsilon_1, 1)$, the associated stationary distribution $\pi_\theta$ gives $ |\Gamma(z_0)- \int_0^1 x \pi_\theta(dx)|<\epsilon$, where we use (A5) and the right hand  side of \eqref{lrun} to estimate a lower bound for $ \int_0^1 x \pi_\theta(dx)$.
Now, there exists $\delta>0$ such that any $z$ satisfying $|z-z_0|<\delta$
gives a threshold $\theta$ either in $(1-\epsilon_1, 1)$ or equal to 1 or $1^+$; for each case, we have
$ |\Gamma(z_0)- \int_0^1 x \pi_\theta(dx)|<\epsilon$.

Case iii) $z_0$ gives $\theta_0=1^+$. Then there exists $\delta>0$ such that any
$z$ satisfying $|z-z_0|<\delta$ gives a threshold parameter
$\theta=1^+$. Then $\Gamma (z)=\Gamma (z_0)=1$. \qed

\begin{theorem}\label{theorem:mfgdi}
Assume \eqref{ggb}. There exists a stationary equilibrium to \eqref{dpvb0}-\eqref{zxt0}.
\end{theorem}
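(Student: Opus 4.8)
The plan is to read the statement as a one-line fixed-point conclusion once Lemma~\ref{lemma:Gam} is granted. The map $\Gamma$ sends each $z\in[0,1]$ to the mean $\int_0^1 x\,\mu_{\theta(z)}(dx)$ of a probability measure on $\bS=[0,1]$; hence $\Gamma(z)\in[0,1]$ and $\Gamma$ is a self-map of the compact convex interval $[0,1]$. Under the standing hypothesis \eqref{ggb}, Lemma~\ref{lemma:Gam} tells us that $\Gamma$ is continuous there, which is the only structural input the existence argument needs.

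First I would invoke a fixed-point theorem. Because $[0,1]$ is a nonempty compact convex subset of $\mathbb{R}$ and $\Gamma$ is continuous, Brouwer's theorem furnishes a point $z^*$ with $\Gamma(z^*)=z^*$. In one dimension this is simply the intermediate value theorem applied to $g(z):=\Gamma(z)-z$: since $g(0)=\Gamma(0)\ge 0$ and $g(1)=\Gamma(1)-1\le 0$, the continuous function $g$ must vanish at some $z^*\in[0,1]$.

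Next I would reconstruct the full equilibrium quadruple from $z^*$. Solving \eqref{dpvb0} with $z=z^*$ yields the value function $v$ and the threshold $\theta(z^*)$, from which Lemma~\ref{lemma:LV} reads off the best-response policy $a^i(\cdot)$; I then set $\mu:=\mu_{\theta(z^*)}$, the limiting distribution of the closed-loop state process supplied by Lemma~\ref{lemma:limd}. Part i) of the definition of a stationary equilibrium holds by construction of $a^i$, part ii) holds because $\mu$ is exactly the limiting distribution under $a^i$, and the fixed-point identity $z^*=\Gamma(z^*)=\int_0^1 x\,\mu(dx)$ is precisely \eqref{zxt0}. Hence $(v,z^*,\mu,a^i(\cdot))$ solves \eqref{dpvb0}-\eqref{zxt0}.

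The hard part is not in this theorem but has already been absorbed into Lemma~\ref{lemma:Gam}: continuity of $\Gamma$ rests on the total-variation convergence of the stationary distribution in the threshold parameter (Lemma~\ref{lemma:sipi}), on the continuous dependence of the dynamic programming solution on $z$, and on the boundary analysis at $\theta=1$ where assumption (A5) is used to control $\int_0^1 x\,\pi_\theta(dx)$ as $\theta\uparrow 1$. The role of \eqref{ggb} is exactly to exclude the degenerate threshold $\theta=0$, at which $\Gamma$ would lose continuity. With those facts in hand, the fixed-point step above is routine, and I would expect the entire proof to occupy only a few lines.
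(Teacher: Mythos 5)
Your proposal is correct and matches the paper's own argument: the paper likewise proves Theorem~\ref{theorem:mfgdi} in one line by combining the continuity of $\Gamma$ from Lemma~\ref{lemma:Gam} with Brouwer's fixed point theorem on $[0,1]$, with all the substantive work already in Lemma~\ref{lemma:Gam}. Your additional remarks (the intermediate value theorem reformulation and the explicit reconstruction of the quadruple $(v,z^*,\mu,a^i(\cdot))$) are consistent elaborations of the same route.
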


\begin{proof}
Since $\Gamma$ is a continuous function from $[0, 1]$ to $[0,1]$ by Lemma \ref{lemma:Gam},   the theorem follows from Brouwer's fixed point theorem.    \qed
\end{proof}

Let  $x_t^{i,\theta}  $   and $\pi_\theta$ denote the state process and its  stationary distribution, respectively,  under a $\theta$-threshold policy.
Denote
$z(\theta) = \int_0^1 x \pi_\theta(dx).$
We have the first comparison  theorem on  monotonicity.
\begin{lemma}\label{theorem:zmono}
 $z(\theta_1)\le z(\theta_2)$ for $0<\theta_1<\theta_2<1$.
\end{lemma}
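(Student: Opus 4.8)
The plan is to prove the stronger statement that the stationary distributions are stochastically ordered, $\pi_{\theta_1}\le_{st}\pi_{\theta_2}$, and then obtain the claim by applying \eqref{psi12} with the increasing function $\psi(x)=x$, since $z(\theta)=\int_0^1 x\,\pi_\theta(dx)$. The starting point is the regenerative structure of the closed-loop chain under a $\theta$-threshold policy: each visit to $0$ is a regeneration, and a single cycle is exactly a segment of the free climbing process $\{Y_t\}$ of the excerpt (with $Y_0=0$ and kernel $Q_0$) run until it first reaches $[\theta,1]$. I would realize both stationary chains on one common copy of $\{Y_t\}$, so that the crossing times satisfy $\tau_{Q_0}^{\theta_1}\le\tau_{Q_0}^{\theta_2}$ pathwise, and use the renewal--reward representation
\begin{align}
\pi_\theta(B)=\frac{E\big[\sum_{t=0}^{\tau_{Q_0}^\theta}1_{\{Y_t\in B\}}\big]}{E[\tau_{Q_0}^\theta+1]},\qquad z(\theta)=\frac{E\big[\sum_{t=0}^{\tau_{Q_0}^\theta}Y_t\big]}{E[\tau_{Q_0}^\theta+1]},\nonumber
\end{align}
whose validity rests on the ergodicity already established (Lemma~\ref{lemma:sipi}, Lemma~\ref{lemma:limd}) and on \eqref{lrun}.

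The clean part of the argument is the comparison of tails at levels below $\theta_1$. Since $\{Y_t\}$ is monotonically increasing, for $0<y<\theta$ every state strictly above $y$ in a cycle occupies the block of times from the first passage of $y$ up to the crossing of $\theta$, so that with $m(\theta):=E[\tau_{Q_0}^\theta]$ one gets
\begin{align}
\pi_\theta((y,1])=\frac{m(\theta)-m(y)+1}{m(\theta)+1}=1-\frac{m(y)}{m(\theta)+1}.\nonumber
\end{align}
For fixed $y$ this is increasing in $m(\theta)$, and $m(\theta)$ is increasing in $\theta$ because $\tau_{Q_0}^{\theta_1}\le\tau_{Q_0}^{\theta_2}$ pathwise; hence $\pi_{\theta_1}((y,1])\le\pi_{\theta_2}((y,1])$ for all $0<y<\theta_1$.

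For the remaining levels, and in particular for the mean directly, I would use the decomposition of the $\theta_2$-cycle into the $\theta_1$-cycle followed by the continuation from the crossing state $W:=Y_{\tau_{Q_0}^{\theta_1}}$ up to the crossing of $\theta_2$ (which is empty when $W\ge\theta_2$). Writing $A_1=\sum_{t=0}^{\tau_{Q_0}^{\theta_1}}Y_t$, $L_1=\tau_{Q_0}^{\theta_1}+1$ and $\Delta A$, $\Delta L$ for the corresponding continuation quantities, the representation exhibits $z(\theta_2)$ as the weighted average of $z(\theta_1)=E[A_1]/E[L_1]$ and $E[\Delta A]/E[\Delta L]$ with weights $E[L_1]$ and $E[\Delta L]$. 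Thus $z(\theta_1)\le z(\theta_2)$ is equivalent to the single inequality $E[A_1]\,E[\Delta L]\le E[\Delta A]\,E[L_1]$, i.e. the continuation-segment mean dominates the cycle mean. Pathwise one has the favorable bounds $A_1\le W L_1$ and $\Delta A\ge W\Delta L$ (every continuation state exceeds $W$, every cycle state is at most $W$), which give $A_1\,\Delta L\le\Delta A\,L_1$ pointwise.

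The main obstacle is exactly that this pointwise inequality involves products inside a single expectation, whereas what is needed is the inequality for the product of separate expectations; equivalently, after conditioning on the information up to $\tau_{Q_0}^{\theta_1}$ and using the strong Markov property, one must compare two differently weighted averages of $W$, the weights being the expected continuation length $\lambda(W)=E[\Delta L\mid W]$ (decreasing in $W$) and the cycle length $L_1$ (negatively associated with the overshoot $W$). Controlling these correlations is where the stochastic monotonicity of $Q_0$ in (A3) enters decisively: it couples the increments across states so that a large overshoot $W$ forces a short continuation and vice versa, preventing the configuration (large $W$, long continuation) that would reverse the inequality. I expect this correlation/overshoot estimate, carried out through a monotone coupling of the continuation climb from $W$ against a fresh climb from $0$, to be the technical heart of the proof; indeed without (A3) one can build examples in which the tail ordering $\pi_{\theta_1}\le_{st}\pi_{\theta_2}$ fails at high levels even though the mean ordering persists, which is why the argument for $z$ is more robust than a purely levelwise comparison.
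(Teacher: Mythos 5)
Your setup is sound and in fact retraces the paper's own skeleton: the renewal--reward representation $z(\theta)=E\big[\sum_{t=0}^{\tau_{Q_0}^\theta}Y_t\big]/(1+E\tau_{Q_0}^\theta)$ is exactly Lemma~\ref{lemma:YS} (built on Lemma~\ref{lemma:yav}, and note the ergodicity input is Lemma~\ref{lemma:erg}, not Lemma~\ref{lemma:sipi}), and your reduction of $z(\theta_1)\le z(\theta_2)$ to the single cross-product inequality $E[A_1]\,E[\Delta L]\le E[\Delta A]\,E[L_1]$ is precisely the paper's inequality \eqref{ymdp} in Lemma~\ref{lemma:tatap}. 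The genuine gap is that you never prove this inequality: the pathwise bounds $A_1\le W L_1$ and $\Delta A\ge W\Delta L$ give $A_1\Delta L\le \Delta A\, L_1$ inside a single expectation, which, as you concede, is not what is needed, and the repair you sketch does not close it. Concretely, by the strong Markov property $E[\Delta L]=E[\lambda(W)]$ and $E[\Delta A]\ge E[W\lambda(W)]$, so your route requires $E[W L_1]\,E[\lambda(W)]\le E[W\lambda(W)]\,E[L_1]$; but the two correlation facts you invoke both point the same way --- Chebyshev's association inequality for the decreasing $\lambda$ gives $E[W\lambda(W)]\le E[W]E[\lambda(W)]$, and negative association of $(W,L_1)$ (itself unproven, and not a consequence of (A3) in any obvious way) gives $E[W L_1]\le E[W]E[L_1]$ --- so they upper-bound both sides and cannot be combined into the needed comparison. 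Deferring this step as ``the technical heart'' means the statement is not proved.

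The paper closes exactly this step by avoiding the overshoot $W$ altogether. Working term by term in the series of Lemma~\ref{lemma:yav}, stochastic monotonicity of the kernel is applied to the one-step conditional mean given $Y_k$, with the \emph{deterministic} pivot $c_r=\int_0^1 y\,Q_Y(dy|r)\ge r$: this yields $m_k=E[Y_{k+1}1_{\{Y_k<r\}}]\le c_r\,P(Y_k<r)$, $\Delta_k=E[Y_{k+1}1_{\{r\le Y_k<r'\}}]\ge c_r\,P(r\le Y_k<r')$, and $E(Y_0+Y_1)\le r+c_r$. Substituting these into \eqref{ymdp}, the constant $c_r$ factors out of both products and the difference collapses to $(r-c_r)\sum_{k\ge1}\eta_k\le 0$. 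In effect, the paper replaces your random pivot $W$ (which entangles $L_1$, $W$, and the continuation length) by the constant $c_r$, which decouples the expectations and makes the cross-product inequality elementary; this is the missing idea in your proposal. Two smaller points: your tail formula $\pi_\theta((y,1])=1-m(y)/(m(\theta)+1)$ for $y<\theta_1$ is correct but plays no role in the mean comparison and does not deliver the advertised stochastic ordering $\pi_{\theta_1}\le_{st}\pi_{\theta_2}$ (levels in $[\theta_1,1]$ are never compared); and finiteness of $E\tau_{Q_0}^{\theta_2}$, which your renewal-reward identities presuppose, is available under (A3) as noted in Appendix~C but should be invoked explicitly.
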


\proof
By the ergodicity of $\{x_t^{i,\theta_l}, t\ge 0\}$ in Lemma \ref{lemma:erg}, we have the representation
$
z(\theta_l)= \lim_{k\to \infty }  \frac{1}{k}
\sum_{t=0}^{k-1}x_t^{i, \theta_l}
$ w.p.1. Lemma
\ref{lemma:bb} implies
$z(\theta_1)\le z(\theta_2).$ \qed

To establish uniqueness, we consider
$ R(x, z)=R_1(x)R_2(z)$, where $R_1\ge 0$ and $R_2\ge 0$, and which satisfies (A1)-(A5). We further make the following assumption.

\begin{itemize}

\item[(A6)]\qquad  $R_2>0$ is strictly increasing on $\bS$.
\end{itemize}

This assumption indicates positive externalities since an individual  benefits from  the decrease of the population average state. This condition has a crucial role in the uniqueness analysis.

Given the product form of $R$, now \eqref{dpb} takes the form:
\begin{align*}
V(x) =  &\min \Big[\beta \int_0^1 V( y) Q_0(dy|x) + R_1(x)R_2(  z),
\quad \beta V(0) + R_1(x)R_2( z)+ \gamma\Big].
\end{align*}
Consider $0\le z_2<z_1\le 1$ and
\begin{align}
V_l(x) =  &\min \Big[\beta \int_0^1 V_l( y) Q_0(dy|x) + R_1(x)R_2(  z_l),  \quad \beta V_l(0) + R_1(x)R_2( z_l)+ \gamma\Big]. \label{Vlr12}
\end{align}
Denote the optimal policy as a threshold policy  with parameter $\theta_l$ in $[0, 1]$ or equal to $1^+$, where we follow the interpretation in Section \ref{sec:br}
if $\theta_l=1^+$.
We state the second comparison theorem about  the threshold parameters under different mean field parameters $z_l$.

\begin{theorem} \label{theorem:s1s2}
 $\theta_1$ and $\theta_2$ in \eqref{Vlr12} are
specified according to the following scenarios:

i) If $\theta_1=0$, then we have either $\theta_2\in [0, 1]$ or
$\theta_2=1^+$.

ii)   If $\theta_1\in (0,1)$, we have either  a)
$\theta_2\in (\theta_1, 1)$, or b) $\theta_2=1 $,
or c) $\theta_2= 1^+$.

iii) If $\theta_1=1$, $\theta_2=1^+$.

iv) If $\theta_1=1^+$, $\theta_2= 1^+$.
\end{theorem}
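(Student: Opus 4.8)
The plan is to collapse the two--parameter comparison into a one--parameter monotonicity statement. Since $R(x,z)=R_1(x)R_2(z)$ with $R_2>0$, dividing \eqref{Vlr12} by $R_2(z_l)$ shows that $W_l:=V_l/R_2(z_l)$ solves the \emph{same} equation \eqref{dpb} but with the fixed risk profile $R_1$ and an effective effort cost $\tilde\gamma_l:=\gamma/R_2(z_l)$; because the comparison defining the best response in \eqref{dpb} is invariant under multiplication by the positive constant $R_2(z_l)$, the induced threshold is unchanged by this rescaling. By (A6) and $z_2<z_1$ we get $\tilde\gamma_2>\tilde\gamma_1$. Hence it suffices to study the best--response threshold $\theta(\tilde\gamma)$ of the common problem with risk $R_1$. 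Writing $D_0:=\beta\int_0^1[R_1(y)-R_1(0)]Q_0(dy|0)$ and $D_{1^+}:=\beta[\widehat V_\beta(1)-\widehat V_\beta(0)]$, where $\widehat V_\beta$ solves \eqref{ven} with $R_1$ in place of $R(\cdot,z)$, the two boundary lemmas leading to \eqref{gaub} and \eqref{glb} give the dictionary $\theta(\tilde\gamma)=0\iff\tilde\gamma\le D_0$, $\theta(\tilde\gamma)=1\iff\tilde\gamma=D_{1^+}$, $\theta(\tilde\gamma)=1^+\iff\tilde\gamma>D_{1^+}$, and $\theta(\tilde\gamma)\in(0,1)$ exactly for $\tilde\gamma\in(D_0,D_{1^+})$.

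With this dictionary, three of the four scenarios are immediate. Scenario i) asserts only that $\theta_2$ lies in the full range $[0,1]\cup\{1^+\}$, which needs nothing. For iii) and iv), $\theta_1=1$ gives $\tilde\gamma_1=D_{1^+}$ and $\theta_1=1^+$ gives $\tilde\gamma_1>D_{1^+}$; either way $\tilde\gamma_2>\tilde\gamma_1\ge D_{1^+}$, so $\tilde\gamma_2>D_{1^+}$ and \eqref{glb} forces $\theta_2=1^+$. The real content is scenario ii): when $\tilde\gamma_1\in(D_0,D_{1^+})$, the inequality $\tilde\gamma_2>\tilde\gamma_1>D_0$ already gives $\theta_2>0$, and subcases a), b), c) correspond to $\tilde\gamma_2\in(\tilde\gamma_1,D_{1^+})$, $\tilde\gamma_2=D_{1^+}$, $\tilde\gamma_2>D_{1^+}$; only a), with both thresholds interior, requires the strict interior monotonicity of $\theta(\tilde\gamma)$.

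For the interior monotonicity I would \emph{invert} the relation rather than differentiate. Fix a prospective threshold $\theta\in(0,1)$, let $\widehat V^\theta$ be the value of the $\theta$--threshold policy (rescaled problem) and $G^\theta(x)=\int_0^1\widehat V^\theta(y)Q_0(dy|x)$. Imposing indifference at $\theta$ between the two branches of \eqref{dpb} and solving the resulting linear renewal system over one regeneration cycle of the pure--$Q_0$ process $Y_t$ started at $0$ determines a unique effort cost
\[
\Psi(\theta)=\tfrac{1}{1-\beta}\big[(1-\beta B(\theta))M(\theta)-\beta(1-\beta)A(\theta)\big],
\]
with $M(\theta)=\beta\int_\theta^1 R_1(y)Q_0(dy|\theta)$, $B(\theta)=E[\beta^{\tau_{Q_0}^\theta}]$, and $A(\theta)=E\sum_{t=0}^{\tau_{Q_0}^\theta}\beta^t R_1(Y_t)$. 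This representation is continuous on $(0,1)$ (weak continuity of $Q_0$ in (A3) and $E\tau_{Q_0}^\theta<\infty$), and its one--sided limits compute to $\Psi(0^+)=D_0$ and $\Psi(1^-)=D_{1^+}$, the latter using (A5) so that $B(\theta)\to0$. The key observation is that $\Psi$ has the best--response map as a \emph{left inverse}: since $\widehat V^\theta$ is increasing and $Q_0$ is strictly stochastically increasing by (A3), the passive-minus-active gap $\beta[G^\theta(x)-\widehat V^\theta(0)]$ is strictly increasing in $x$, so at cost $\Psi(\theta)$ the unique best response from Lemma~\ref{lemma:LV}iii-d has threshold exactly $\theta$; hence $\theta(\Psi(\theta))=\theta$ and $\Psi$ is injective. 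A continuous injective function on an interval is strictly monotone, and the endpoint values $\Psi(0^+)=D_0<D_{1^+}=\Psi(1^-)$ fix the direction, so $\Psi$ and its inverse $\theta(\cdot)$ are strictly increasing on $(D_0,D_{1^+})$. This gives $\theta_2=\theta(\tilde\gamma_2)>\theta(\tilde\gamma_1)=\theta_1$ in subcase ii-a, finishing the proof.

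The main obstacle is precisely this interior step, and the delicate point is not the closed form but justifying the left--inverse identity: one must know that the value of the fixed (possibly suboptimal) $\theta$--threshold policy is monotone in the state, which is what makes the passive-minus-active gap strictly increasing and pins the best response at $\theta$, forcing injectivity of $\Psi$. All the strictness ultimately rests on the strict stochastic monotonicity in (A3) together with the strict monotonicity of $R_1$ inherited from (A2). A secondary technical burden is the continuity of $\Psi$ and the evaluation of its endpoint limits, where (A5) is what controls the regeneration time as $\theta\uparrow1$.
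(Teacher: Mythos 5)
Your opening move is exactly the paper's: the proof of Theorem~\ref{theorem:s1s2} consists precisely of dividing \eqref{Vlr12} by $R_2(z_l)$, observing $\gamma_1=\gamma/R_2(z_1)<\gamma_2=\gamma/R_2(z_2)$, and reducing to the one-parameter auxiliary MDP \eqref{vlr12} of Appendix~D. Your case bookkeeping is also right: the dictionary $\theta=0\iff\tilde\gamma\le D_0$ and $\theta=1^+\iff\tilde\gamma>D_{1^+}$ is the content of the two lemmas giving \eqref{gaub} and \eqref{glb}, scenarios i), iii), iv) follow as you say, and I checked your closed form for $\Psi(\theta)$ (it is the correct indifference cost, obtained by solving $\widehat V^\theta(0)=(A+rB)/(1-\beta B)$ into $\beta G^\theta(\theta)=\beta\widehat V^\theta(0)+r$) as well as your endpoint limits $\Psi(0^+)=D_0$ and $\Psi(1^-)=D_{1^+}$.

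The gap is at the crux you yourself flag: the left-inverse identity $\theta(\Psi(\theta))=\theta$ for \emph{every} $\theta\in(0,1)$. Your argument needs the value $\widehat V^\theta$ of the fixed (a priori suboptimal) $\theta$-threshold policy at cost $r=\Psi(\theta)$ to be increasing, and you assert this rests on (A2)--(A3); but it does not follow by the standard machinery. The policy-evaluation operator $Tg(x)=\beta\int g\,Q_0(dy|x)+R_1(x)$ for $x<\theta$, $Tg(x)=\beta g(0)+R_1(x)+r$ for $x\ge\theta$, does \emph{not} preserve monotonicity across the junction (one would need $\beta\int g\,Q_0(dy|\theta)\le \beta g(0)+r$ along the iteration, which is a fixed-point property of $\widehat V^\theta$ itself, not of the iterates), and a sample-path coupling runs into the same circularity: comparing the continuation below and above $\theta$ is exactly the branch inequality you are trying to establish. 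Note the paper sidesteps this entirely: in Appendix~D it proves injectivity of $r\mapsto\theta(r)$ only at costs where the threshold policy is \emph{already known optimal} (Lemma~\ref{lemma:the12}, by contradiction via the regenerative representation, using the innocuous identity that an optimal interior threshold satisfies its own indifference equation), and combines this with continuity of $\theta(r)$ and the intermediate value theorem (Lemma~\ref{lemma:interv}~iv). The easy direction of your construction, $\Psi(\theta(r))=r$ for $r\in(D_0,D_{1^+})$, is sound and in fact recovers Lemma~\ref{lemma:the12} by a cleaner computation than the paper's contradiction argument --- but it only gives injectivity of $\theta(\cdot)$, so you would then still need continuity of $\theta(r)$ to conclude strict monotonicity, i.e., you fall back on the paper's skeleton rather than bypassing it. A secondary quibble: (A5) alone ($E\tau^\theta_{Q_0}\to\infty$) does not give $B(\theta)=E\beta^{\tau^\theta_{Q_0}}\to0$; you need $\tau^\theta_{Q_0}\to\infty$ almost surely, which holds because under (A3)-ii) the chain never reaches $1$ in finite time.
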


\proof Since $R_2(z_1)> R_2(z_2)>0$, we divide both sides of \eqref{Vlr12} by $R_2(z_l)$ and define
$
\gamma_l= \frac{\gamma}{R_2(z_l)}.
$
Then $0<\gamma_1<\gamma_2$. The dynamic programming equation reduces to \eqref{vlr12}.
Subsequently, the optimal policy is determined according to Lemma \ref{lemma:interv}. \qed

\begin{corollary}\label{cor:uni}
Assume (A6) in addition to the assumptions in Theorem \ref{theorem:mfgdi}. Then the system \eqref{dpvb0}-\eqref{zxt0}  has a unique stationary equilibrium.
\end{corollary}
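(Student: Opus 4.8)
The plan is to reduce the uniqueness question to a statement about the scalar map $\Gamma$ introduced before Lemma~\ref{lemma:Gam}. A stationary equilibrium to \eqref{dpvb0}--\eqref{zxt0} corresponds precisely to a fixed point $z=\Gamma(z)$ on $[0,1]$: given such a $z$, the threshold $\theta(z)$ obtained from \eqref{dpvb0} yields the best response, and the associated limiting distribution $\mu_{\theta(z)}$ has mean $z$. Since existence is already guaranteed by Theorem~\ref{theorem:mfgdi}, it suffices to show that $\Gamma$ has \emph{at most one} fixed point. The idea is to prove that $\Gamma$ is non-increasing on $[0,1]$, so that $g(z):=\Gamma(z)-z$ is strictly decreasing and can vanish at no more than one point.

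To establish monotonicity I would write $\Gamma(z)=z(\theta(z))$, where $\theta(z)\in(0,1]\cup\{1^+\}$ is the best-response threshold at $z$ and $z(\theta)=\int_0^1 x\,\pi_\theta(dx)$; note that \eqref{ggb} excludes the value $\theta(z)=0$, so the threshold always stays in this range. The product structure $R(x,z)=R_1(x)R_2(z)$ together with (A6) is exactly what makes $\theta(z)$ monotone: for $z_2<z_1$ the effective effort costs satisfy $\gamma_2=\gamma/R_2(z_2)>\gamma_1=\gamma/R_2(z_1)$, so by Theorem~\ref{theorem:s1s2} the corresponding thresholds obey $\theta(z_2)\ge\theta(z_1)$ (interpreting the ordering through the cases in that theorem, including the boundary values $1$ and $1^+$). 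Hence $z\mapsto\theta(z)$ is non-increasing. On the other hand, Lemma~\ref{theorem:zmono} shows $z(\theta)$ is non-decreasing in $\theta$ on $(0,1)$, and at the boundary one has $z(1)=z(1^+)=1$, consistent with this monotonicity. Composing a non-increasing map with a non-decreasing one yields that $\Gamma(z)=z(\theta(z))$ is non-increasing on $[0,1]$.

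With $\Gamma$ non-increasing, the function $g(z)=\Gamma(z)-z$ is strictly decreasing: for $z<z'$ we have $\Gamma(z)\ge\Gamma(z')$ and $-z>-z'$, so $g(z)>g(z')$. Strict monotonicity alone forces $g$ to have at most one zero, while Theorem~\ref{theorem:mfgdi} guarantees at least one. Hence the fixed point of $\Gamma$, and thus the stationary equilibrium, is unique.

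I expect the main obstacle to be the careful bookkeeping of the threshold values at the boundary, namely verifying that the two monotonicity statements compose correctly across the discrete cases $\theta\in\{1,1^+\}$ (and the excluded $\theta=0$). Lemma~\ref{theorem:zmono} is stated only for interior thresholds $0<\theta_1<\theta_2<1$, while Theorem~\ref{theorem:s1s2} lists the transitions between interior and boundary thresholds separately; one must check that in every case the chain $z_2<z_1\Rightarrow\theta(z_2)\ge\theta(z_1)\Rightarrow z(\theta(z_2))\ge z(\theta(z_1))$ remains valid, using $z(1)=z(1^+)=1$ for the terminal cases. Once this is confirmed, the reduction to a strictly decreasing scalar function makes uniqueness immediate.
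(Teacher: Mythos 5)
Your proposal is correct and follows essentially the same route as the paper's (which is given only by citation to its predecessors but visibly rests on the same machinery): it combines Theorem \ref{theorem:s1s2} (thresholds move up as $z$ decreases, via $\gamma_l=\gamma/R_2(z_l)$ and (A6)) with Lemma \ref{theorem:zmono} and the boundary facts $z(1)=z(1^+)=1$, merely repackaging the usual two-equilibria contradiction as ``$\Gamma$ is non-increasing, hence $g(z)=\Gamma(z)-z$ is strictly decreasing and has at most one zero.'' The boundary bookkeeping you flag is indeed the only delicate point, and it is settled exactly as you indicate: \eqref{ggb} excludes $\theta=0$, and cases ii)--iv) of Theorem \ref{theorem:s1s2} cover all remaining transitions, so the composition argument goes through.
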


\begin{proof}
The proof is similar to \cite{HM16Chen,HM17}, which assumed (A3$^\prime$). \qed.
\end{proof}

\section{Comparative Statics}
\label{sec:cs}

This section assumes (A1)-(A6).
Consider the two solution systems
\begin{align}\label{csvb}
\begin{cases}
\displaystyle
\bar v(x) =  \min \Big[\beta \int_0^1 \bar v( y) Q_0(dy|x) + R_1(x)R_2( \bar z),
\quad \beta \bar v(0) + R_1(x)R_2( \bar z)+ \bar \gamma\Big],\\
\displaystyle
\bar z=\int_0^1 x\bar  \mu(dx),
\end{cases}
\end{align}
and
\begin{align}\label{csv}
\begin{cases}
\displaystyle
v(x) =  \min \Big[\beta \int_0^1 v( y) Q_0(dy|x) + R_1(x)R_2(  z),
\quad \beta v(0) + R_1(x)R_2( z)+ \gamma\Big],\\
\displaystyle
z=\int_0^1 x \mu(dx).
\end{cases}
\end{align}
Suppose $\bar \gamma$ satisfies \eqref{ggb}.
By Corollary \ref{cor:uni}, \eqref{csvb} has a unique solution denoted by $(\bar v, \bar z,\bar \mu, \bar \theta)$, where $\bar\theta$ is  the threshold parameter. We further assume
$\bar \theta\in (0, 1)$.  Suppose $\gamma>\bar \gamma$.  Then we can uniquely solve $(v, z,\mu, \theta  )$. The next theorem presents a result on monotone comparative statics \cite{T98}.
\begin{theorem} \label{theorem:mcs}
If $\gamma >\bar \gamma$,
we have
$$
\theta >\bar \theta, \quad z >\bar z, \quad  v>\bar v.
$$
\end{theorem}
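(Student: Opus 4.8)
The plan is to collapse the coupled system \eqref{csvb}--\eqref{csv} to a one-dimensional monotone fixed-point problem and then run a comparison argument. As in the proof of Theorem~\ref{theorem:s1s2}, dividing the dynamic programming equation by $R_2(z)>0$ shows that the best-response threshold depends on the pair $(\gamma,z)$ only through the normalized effort cost $\gamma/R_2(z)$; write $\theta=\Theta(\gamma/R_2(z))$, where $\Theta$ is nondecreasing and, by Theorem~\ref{theorem:s1s2}(ii), strictly increasing at any argument producing an interior threshold in $(0,1)$. On the population side, $z(\theta)=\int_0^1 x\,\pi_\theta(dx)$ is nondecreasing in $\theta$ by Lemma~\ref{theorem:zmono}, with $z(\theta)=1$ for $\theta\in\{1,1^+\}$. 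Hence the two equilibria are the fixed points of the scalar maps $H_\gamma(\theta):=\Theta(\gamma/R_2(z(\theta)))$ and $H_{\bar\gamma}$; since $R_2$ is increasing by (A6), $R_2(z(\theta))$ is nondecreasing in $\theta$, so each $H$ is \emph{nonincreasing}, and $H_\gamma\ge H_{\bar\gamma}$ pointwise because $\gamma>\bar\gamma$. I would establish $\theta>\bar\theta$ first, then deduce $z>\bar z$, and finally $v>\bar v$.

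For $\theta>\bar\theta$ I argue by contradiction. Suppose $\theta\le\bar\theta$. As $H_\gamma$ is nonincreasing and dominates $H_{\bar\gamma}$, we get $\theta=H_\gamma(\theta)\ge H_\gamma(\bar\theta)\ge H_{\bar\gamma}(\bar\theta)=\bar\theta$, so all of these are equalities and in particular $H_\gamma(\bar\theta)=\bar\theta$. Since $z(\bar\theta)=\bar z$, this reads $\Theta(\gamma/R_2(\bar z))=\bar\theta=\Theta(\bar\gamma/R_2(\bar z))$. But $\gamma>\bar\gamma$ and $R_2(\bar z)>0$ give $\gamma/R_2(\bar z)>\bar\gamma/R_2(\bar z)$, and as $\bar\theta\in(0,1)$ the strict monotonicity of $\Theta$ from Theorem~\ref{theorem:s1s2}(ii) forces $\Theta(\gamma/R_2(\bar z))>\bar\theta$, a contradiction. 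Thus $\theta>\bar\theta$.

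Given $\theta>\bar\theta$, weak monotonicity already yields $z=z(\theta)\ge z(\bar\theta)=\bar z$. To sharpen this to $z>\bar z$ I split on $\theta$: if $\theta\in\{1,1^+\}$ then $z=1$, while $\bar z=z(\bar\theta)<1$ because for $\bar\theta<1$ the law $\pi_{\bar\theta}$ places an atom at $0$; if $\theta\in(\bar\theta,1)$ I invoke a strict form of Lemma~\ref{theorem:zmono}, namely $z(\theta)>z(\bar\theta)$ for $0<\bar\theta<\theta<1$. This strict monotonicity is where I expect the real work to lie, since it does not follow from the weak comparison alone. I would derive it from the regenerative representation $z(\theta)=E[\sum_{t=0}^{\tau_{Q_0}^\theta}Y_t]/E[\tau_{Q_0}^\theta+1]$ over a reset cycle, using the strict stochastic monotonicity and positive-density parts of (A3): raising the threshold lengthens the cycle with positive probability and appends states that strictly exceed those reached before, so the time-average state increases strictly, while (A5) keeps $E\tau_{Q_0}^\theta$ finite on $(0,1)$ so the ratio is well defined.

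Finally, for $v>\bar v$ I compare the dynamic programming operators \eqref{dpo} for $(\gamma,z)$ and $(\bar\gamma,\bar z)$, call them $\mathcal L$ and $\bar{\mathcal L}$. Because $\gamma>\bar\gamma$, $R_2(z)>R_2(\bar z)$ (from (A6) and $z>\bar z$) and $R_1\ge0$, each branch of the inner minimum for $\mathcal L$ dominates the corresponding branch for $\bar{\mathcal L}$, whence $\mathcal L g\ge\bar{\mathcal L}g$ for every $g$, strictly so at each $x>0$ (there $R_1(x)>R_1(0)\ge0$, since $R=R_1R_2$ is strictly increasing in $x$ with $R_2>0$, so the running-cost gap $R_1(x)(R_2(z)-R_2(\bar z))$ is positive). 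Starting value iteration from $g_0\equiv0$ and using monotonicity of $\mathcal L$ gives $v\ge\bar v$; plugging this into the fixed-point identity, $v(x)=(\mathcal L v)(x)\ge(\mathcal L\bar v)(x)>(\bar{\mathcal L}\bar v)(x)=\bar v(x)$ for every $x>0$. At $x=0$, which lies below both thresholds since $\theta>\bar\theta>0$, both value functions obey the inaction identity $v(0)=\beta\int_0^1 v(y)\,Q_0(dy|0)+R_1(0)R_2(z)$; as $Q_0(\cdot|0)$ has a positive density with no atom at $0$ by (A3) and $v>\bar v$ on $(0,1]$, the integral is strictly larger, giving $v(0)>\bar v(0)$. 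Hence $v>\bar v$ on all of $\bS$.
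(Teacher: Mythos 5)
Your proposal is correct and takes essentially the same route as the paper's own proof: the contradiction argument for $\theta>\bar\theta$ via the normalized cost $\gamma/R_2(z)$ together with Lemma \ref{theorem:zmono} and the strict threshold comparison of Theorem \ref{theorem:s1s2} (your scalar map $H_\gamma$ is only a repackaging of this), the strict inequality $z>\bar z$ from the regenerative cycle-average comparison --- which is exactly what the paper's Remark \ref{rem:b1} supplies, so you need not re-derive it --- and value iteration plus strict branch-wise domination of the dynamic programming operators for $v>\bar v$, including the same final step at $x=0$ using the inaction branch and the positive density of $Q_0(\cdot|0)$. One small slip: finiteness of $E\tau_{Q_0}^\theta$ for $\theta\in(0,1)$ follows from (A3), not (A5) --- assumption (A5) asserts $E\tau_{Q_0}^\theta\to\infty$ as $\theta\uparrow 1$ and plays no role in this theorem.
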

\proof We prove by contradiction. Assume $\theta\le \bar \theta$. Then by Lemma \ref{theorem:zmono}, $z\le \bar z$, and therefore, $ \frac{\gamma}{R_2(z)} >\frac{\bar \gamma}{R_2(\bar z)}$. By the method of proving  Theorem
\ref{theorem:s1s2}, we would establish $\theta>\bar \theta$, which contradicts the assumption $\theta\le \bar \theta$. We conclude $ \theta> \bar \theta$. By Lemma \ref{theorem:zmono} and Remark \ref{rem:b1}, we have $z> \bar z$.
For \eqref{csvb}, we use value iteration to approximate $\bar v$ by an increasing sequence of functions $\bar v_k$ with $\bar v_0=0$. Similarly, $v$ is approximated by $v_k$ with $v_0=0$. By induction, we have $ v_k\ge \bar v_k$ for all $k$. This proves $ v\ge \bar v$.

Next, we have $\beta  v(0) + R_1(x)R_2(  z)+  \gamma>
\beta \bar v(0) + R_1(x)R_2( \bar z)+ \bar \gamma$ on $[0,1]$, and
$\beta \int_0^1  v( y) Q_0(dy|x) + R_1(x)R_2(  z) >\beta \int_0^1 \bar v( y) Q_0(dy|x) + R_1(x)R_2( \bar z)$ on $(0, 1]$. By the method in \cite[Lemma 2]{HM16Chen}, we have
$v>\bar v$ on $(0, 1]$. Then $\int_0^1  v( y) Q_0(dy|0)> \int_0^1  \bar v( y) Q_0(dy|0)$. This further implies $v(0)>\bar v(0)$.
 \qed

\begin{remark}
It is possible to have  $\theta=1^+$ in Theorem \ref{theorem:mcs}.
\end{remark}

By a continuity argument, we can further show
$\lim_{\gamma\to \bar\gamma} (|\theta-\bar \theta| +|z-\bar z| +\sup_x|v(x)-\bar v(x)|)=0$.
In the analysis below, we take $\gamma=\bar \gamma +\epsilon$ for some small $\epsilon>0$. For this section, we  further introduce the following assumption.

\begin{itemize}
\item[(A7)] \qquad For $\gamma>\bar \gamma$,
$(v,z,\theta)$ has the representation
\begin{align}
&v(x)=\bar v(x)+ \epsilon w(x) +o(\epsilon), \qquad 0\le x\le 1, \\
& z = \bar z + \epsilon z_\gamma +o( \epsilon), \\
& \theta = \bar \theta + \epsilon \theta_\gamma +o( \epsilon),
\end{align}
where $v,z,\theta$ are solved  depending on the parameter $\gamma$ and $w$ is a function defined on $[0,1]$.
 The derivatives
$z_\gamma$ and $\theta_\gamma$ at $\bar \gamma$ exist, and  $R_2(z)$ is differentiable on $[0,1]$. For  $0\le x<1$,  the probability density function  $q( y|x)$, $y\in [x, 1]$, for $Q_0(dy|x)$  is continuous on $\{(x,y)|0\le   x\le y< 1\}$. Moreover,  $\frac{\partial q(y|x)}{\partial x}$ exists and is continuous in $(x,y)$.
\end{itemize}

We aim to provide a characterization of  $w, z_\gamma, \theta_\gamma$.

\begin{theorem}
The function $w$ satisfies
\begin{align} \label{weq}
w(x)=
\begin{cases}
 \displaystyle
 \beta \int_0^1 w(y)Q_0(dy|x)+R_1(x)R_2'(\bar z) z_\gamma,& 0\le x\le \bar \theta, \\
 \displaystyle
\beta w(0)+R_1(x)R_2'(\bar z)z_\gamma+1, &  \bar \theta<x\le 1.
\end{cases}
\end{align}
\end{theorem}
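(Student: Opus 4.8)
The plan is to differentiate the two dynamic programming equations branch by branch at $\bar\gamma$, using the threshold structure established earlier. Since $\bar\theta\in(0,1)$ and Theorem~\ref{theorem:mcs} gives $\theta>\bar\theta$ while the continuity remark yields $\theta\to\bar\theta$ as $\epsilon\to0$, for every fixed $x\neq\bar\theta$ the \emph{same} argument of the $\min$ is selected in both equations once $\epsilon$ is small enough. For $x<\bar\theta$ we have $x<\theta$, so the inaction branch is active in both, giving
\[
v(x)-\bar v(x)=\beta\int_0^1[v(y)-\bar v(y)]\,Q_0(dy|x)+R_1(x)[R_2(z)-R_2(\bar z)].
\]
For $x>\bar\theta$ we have $x>\theta$ for small $\epsilon$, so the active-effort branch is selected in both, giving
\[
v(x)-\bar v(x)=\beta[v(0)-\bar v(0)]+R_1(x)[R_2(z)-R_2(\bar z)]+(\gamma-\bar\gamma).
\]

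First I would divide each identity by $\epsilon$ and pass to the limit using the expansions in (A7). The quotients $\frac{v(x)-\bar v(x)}{\epsilon}$ and $\frac{v(0)-\bar v(0)}{\epsilon}$ converge to $w(x)$ and $w(0)$. Writing $\frac{R_2(z)-R_2(\bar z)}{\epsilon}=\frac{R_2(z)-R_2(\bar z)}{z-\bar z}\cdot\frac{z-\bar z}{\epsilon}$ and using differentiability of $R_2$ together with $\frac{z-\bar z}{\epsilon}\to z_\gamma$ produces the term $R_1(x)R_2'(\bar z)z_\gamma$, while $\gamma-\bar\gamma=\epsilon$ makes the last term contribute exactly $1$ in the second case. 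This gives the two branches of \eqref{weq} on $[0,\bar\theta)$ and $(\bar\theta,1]$, and the value at $x=\bar\theta$ follows by continuity of $w$ (equivalently, by continuity of both sides of each branch as $x\to\bar\theta$).

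The hard part will be justifying the interchange of the limit with the integral $\int_0^1[v(y)-\bar v(y)]\,Q_0(dy|x)$ in the first case. I would establish a uniform first-order bound $\sup_y|v(y)-\bar v(y)|=O(\epsilon)$, consistent with the uniform convergence $\sup_x|v(x)-\bar v(x)|\to0$ already noted after Theorem~\ref{theorem:mcs} and refined by (A7), so that the integrands $\frac{v(y)-\bar v(y)}{\epsilon}$ are dominated uniformly in $\epsilon$. These quotients converge pointwise to $w(y)$ for every $y\neq\bar\theta$; the only region where the limiting branch identity can fail is the transition interval $[\bar\theta,\theta)$ of width $O(\epsilon)$, on which $\bar v$ uses the active branch while $v$ still uses the inaction branch. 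Because $Q_0(\cdot|x)$ has a density for $x<\bar\theta<1$ by (A3)-ii), it assigns no atom and the mass $Q_0([\bar\theta,\theta)|x)$ of this shrinking interval tends to $0$, so its contribution vanishes in the limit. Dominated convergence then moves the limit inside, yielding $\beta\int_0^1 w(y)\,Q_0(dy|x)$ and completing the argument. I note finally that the smoothness hypotheses on $q(y|x)$ in (A7) are not needed for \eqref{weq} itself and will only enter the subsequent characterization of $z_\gamma$ and $\theta_\gamma$; the present identity requires only the threshold ordering, differentiability of $R_2$, and the dominated convergence step above.
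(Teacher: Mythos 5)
Your proposal is correct and takes essentially the same route as the paper's proof: for fixed $x$ on either side of $\bar\theta$, subtract the two dynamic programming equations along the branch that is active in both (using $\theta>\bar\theta$ together with $\theta\to\bar\theta$ as $\epsilon\to 0$), divide by $\epsilon$, and pass to the limit via the expansions in (A7), with $\gamma-\bar\gamma=\epsilon$ producing the $+1$ in the second branch. The extra care you take with the integral term --- the uniform bound $\sup_y|v(y)-\bar v(y)|=O(\epsilon)$, which indeed follows from the $\beta$-contraction property of the dynamic programming operator together with $z-\bar z=O(\epsilon)$, and the vanishing $Q_0$-mass of the transition window $[\bar\theta,\theta)$ --- is rigor the paper leaves implicit, so this is a refinement rather than a departure.

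One small repair is needed at the boundary point: you invoke ``continuity of $w$'' to cover $x=\bar\theta$, but $w$ is in general discontinuous precisely at $x=\bar\theta$ (Remark~\ref{re:dis}), and left-continuity of $w$ there is not established by your argument either. No limiting-in-$x$ step is needed: since $\bar\theta$ is the indifference point for $\bar v$, the inaction-branch equality $\bar v(\bar\theta)=\beta\int_0^1\bar v(y)\,Q_0(dy|\bar\theta)+R_1(\bar\theta)R_2(\bar z)$ holds at $\bar\theta$ itself, while $v$ selects the inaction branch at $\bar\theta$ because $\bar\theta<\theta$; hence the subtracted identity, and with it the first branch of \eqref{weq}, holds at $x=\bar\theta$ directly --- exactly why the paper writes ``for any fixed $x\in[0,\bar\theta]$'' with the closed interval.
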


\proof
We have
$$
\bar v(x)= \beta \int_0^1 \bar v( y) Q_0(dy|x) + R_1(x)R_2(\bar z), \quad x\in [0, \bar \theta]
$$
and
$$
v(x)= \beta \int_0^1 v( y) Q_0(dy|x) + R_1(x)R_2(z), \quad x\in [0,  \theta].
$$
Note that $\theta> \bar \theta$. For any fixed $x\in[0, \bar \theta]$,
we have
$$
v(x)-\bar v(x)= \beta \int_0^1 (v(y)-\bar v(y)) Q_0(dy|x)+R_1(x)(R_2(z)-R_2(\bar z)).
$$
Then the equation of $w(x)$ for $x\in [0, \bar \theta]$ is derived.
We similarly treat the case $x\in (\bar \theta, 1]$. \qed

\begin{remark} \label{re:dis}
In general $w$ has discontinuity  at $x=\bar \theta$, so that
$\beta \int_0^1 w(y)Q_0(dy|\bar \theta)\ne  \beta w(0)+1.$
We give some interpretation. Let the value function be written as $v(x, \gamma)$ to explicitly indicate $\gamma$. Let the rectangle $[0,1]\times [\gamma_a, \gamma_b]$
 be a region of interest in which $(x,\gamma)$ varies so that the value function defines a continuous surface. Then $( \theta, \gamma)$ starts at $(\bar\theta, \bar\gamma)$ and traces out the curve of an increasing function along which the expression of the value function has a switch, and the value function surface may be visualized as two pieces glued together along the curve in a non-smooth way. The value of $w$ amounts to finding on the surface the directional derivative in the direction of $\gamma$; and therefore, discontinuity may occur at $x=\bar\theta$.
  \end{remark}

To better understand the solution of \eqref{weq},
we consider the  general equation
\begin{align}\label{wzc}
W(x)=
\begin{cases}
 \displaystyle
\beta \int_0^1 W(y)Q_0(dy|x)+R_1(x)R_2'(z_0) c_0, & 0\le x\le \theta_0,  \\
  \displaystyle
\beta W(0)+R_1(x)R_2'(z_0)c_0+1, &   \theta_0<x\le 1,
\end{cases}
\end{align}
where $c_0$, $z_0\in [0,1]$ and $\theta_0\in (0, 1)$ are arbitrarily chosen and fixed. Let $B([0,1], \mathbb{R})$ be the Banach space of bounded Borel measurable functions with norm $\|g\|=\sup_x |g(x)|$. By a contraction mapping, we can show \eqref{wzc} has a unique solution
$W\in B([0,1], \mathbb{R})$.

We continue to  characterize the  sensitivity $\theta_\gamma$ of the threshold.
Recall the partial derivative $\frac{\partial q(y|x)}{\partial x}$.
\begin{lemma}
We have
\begin{align} \label{bsiga}
\beta \Big[\int_{\bar \theta}^1 \bar v(y) \frac{\partial q(y|\bar\theta)}{\partial x} dy -\bar v(\bar \theta) q(\bar \theta|\bar \theta)\Big] \theta_\gamma=1+\beta w(0) -\beta \int_{\bar\theta}^1 w(y) Q_0(dy|\bar \theta).
\end{align}
\end{lemma}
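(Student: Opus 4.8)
The plan is to differentiate, in the direction of $\gamma$, the value-matching identity that pins down an interior threshold, and to read off \eqref{bsiga} from the first-order terms. First I record this identity. For system \eqref{csvb}, since $\bar\theta\in(0,1)$ is produced by case~d) of Lemma~\ref{lemma:LV}, where the threshold is the unique point at which the strictly increasing gap between the two arguments of the minimum vanishes, the two arguments agree at $x=\bar\theta$; the common term $R_1(\bar\theta)R_2(\bar z)$ cancels and leaves
\[
\beta\int_0^1 \bar v(y)\,Q_0(dy|\bar\theta)=\beta \bar v(0)+\bar\gamma .
\]
Applying the same reasoning to \eqref{csv} at its interior threshold $\theta$ gives
\[
\beta\int_0^1 v(y)\,Q_0(dy|\theta)=\beta v(0)+\gamma .
\]
I would then substitute the (A7) expansions $v=\bar v+\epsilon w+o(\epsilon)$, $\theta=\bar\theta+\epsilon\theta_\gamma+o(\epsilon)$ and $\gamma=\bar\gamma+\epsilon$ into the second identity and expand to first order in $\epsilon$.

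For the expansion I would write $\int_0^1 h(y)\,Q_0(dy|x)=\int_x^1 h(y)\,q(y|x)\,dy$, using the density from (A3)-ii), so that the left side of the perturbed identity equals $\beta\int_\theta^1[\bar v(y)+\epsilon w(y)]\,q(y|\theta)\,dy+o(\epsilon)$. The $\bar v$ part is $\beta\Phi(\theta)$ with $\Phi(x)=\int_x^1\bar v(y)q(y|x)\,dy$; differentiating by the Leibniz rule at $\bar\theta$ produces a lower-limit contribution $-\bar v(\bar\theta)q(\bar\theta|\bar\theta)$ and a density contribution $\int_{\bar\theta}^1\bar v(y)\,\partial q(y|\bar\theta)/\partial x\,dy$, so that $\beta\Phi(\theta)=\beta\Phi(\bar\theta)+\epsilon\beta\theta_\gamma[\int_{\bar\theta}^1\bar v(y)\,\partial q(y|\bar\theta)/\partial x\,dy-\bar v(\bar\theta)q(\bar\theta|\bar\theta)]+o(\epsilon)$; this differentiation is legitimate because (A7) supplies the joint continuity of $q$ and $\partial q/\partial x$. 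For the $\epsilon w$ part, replacing the lower limit $\theta$ by $\bar\theta$ and $q(\cdot|\theta)$ by $q(\cdot|\bar\theta)$ changes the integral only by $O(\epsilon)$, so to first order it contributes $\epsilon\beta\int_{\bar\theta}^1 w(y)q(y|\bar\theta)\,dy=\epsilon\beta\int_0^1 w(y)\,Q_0(dy|\bar\theta)$. The right side expands as $\beta v(0)+\gamma=\beta\bar v(0)+\bar\gamma+\epsilon[\beta w(0)+1]+o(\epsilon)$.

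Finally I would match orders. The zeroth-order terms are exactly the base value-matching identity and cancel. Equating the coefficients of $\epsilon$ gives
\[
\beta\theta_\gamma\Big[\int_{\bar\theta}^1\bar v(y)\tfrac{\partial q(y|\bar\theta)}{\partial x}\,dy-\bar v(\bar\theta)q(\bar\theta|\bar\theta)\Big]+\beta\int_0^1 w(y)\,Q_0(dy|\bar\theta)=1+\beta w(0),
\]
which rearranges to \eqref{bsiga}. The hard part will be the rigorous first-order expansion: justifying differentiation under the integral sign at the moving lower limit $\theta$ and verifying that the remainder is genuinely $o(\epsilon)$, in particular that the sliver $[\bar\theta,\theta]$ contributes only $O(\epsilon^2)$ since $v-\bar v=O(\epsilon)$ there. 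I would also note that the discontinuity of $w$ at $\bar\theta$ flagged in Remark~\ref{re:dis} is harmless, as it affects the integrand only at a single point of zero Lebesgue measure.
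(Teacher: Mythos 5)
Your proposal is correct and follows essentially the same route as the paper: both differentiate the value-matching condition $\beta\int_{\theta}^1 v(y)\,Q_0(dy|\theta)=\beta v(0)+\gamma$ at the threshold to first order in $\epsilon$ using the (A7) expansions, extracting exactly the same three first-order contributions (the moving lower-limit term $-\bar v(\bar\theta)q(\bar\theta|\bar\theta)$, the density-derivative term, and the $w$-integral). The only difference is bookkeeping—you invoke the Leibniz rule on $\Phi(x)=\int_x^1\bar v(y)q(y|x)\,dy$, while the paper reaches the identical terms via a telescoping add-and-subtract decomposition of the difference $\Delta$—and your closing remarks on the $O(\epsilon^2)$ sliver and the harmlessness of the discontinuity of $w$ at $\bar\theta$ are sound.
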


\proof
Write $\gamma=\bar \gamma +\epsilon$. By the property of the threshold,   we have
\begin{align*}
\beta\int_{\bar \theta}^1 \bar v(y) Q_0(dy|\bar \theta)  =  \beta \bar v(0)+\bar \gamma,\quad
\beta \int_{\theta}^1 v(y) Q_0(dy|\theta)= \beta v(0)+\bar \gamma+\epsilon. \end{align*}
Note that $\theta>\bar \theta$. We check
\begin{align*}
\Delta:=&\int_{\theta}^1 v(y) Q_0(dy|\theta)-
\int_{\bar \theta}^1 \bar v(y) Q_0(dy|\bar \theta)\\
=& \int_{\theta}^1 v(y) Q_0(dy|\theta)-
\int_{\theta}^1 \bar v(y) Q_0(dy|\bar \theta)-
\int_{\bar \theta}^\theta \bar v(y) Q_0(dy|\bar \theta)\\
=&  \int_{\theta}^1 v(y) Q_0(dy|\theta)-
\int_{\theta}^1 \bar v(y) Q_0(dy| \theta)\\
& +  \int_{\theta}^1 \bar v(y) Q_0(dy| \theta)-
\int_{\theta}^1 \bar v(y) Q_0(dy|\bar \theta)-
\int_{\bar \theta}^\theta \bar v(y) Q_0(dy|\bar \theta)\\
=&\epsilon \int_\theta^1 w(y)q(y|\theta)dy +(\theta-\bar\theta) \int_\theta^1 \bar v(y) [\partial q(y|\theta) / \partial x]dy-(\theta-\bar \theta)\bar v(\bar\theta)q(\bar\theta|\bar\theta)\\
 &+o(\epsilon+|\theta-\bar \theta|)  \\
=&\epsilon \int_{\bar \theta}^1 w(y)q(y|\bar\theta)dy +(\theta-\bar\theta) \int_{\bar\theta}^1 \bar v(y) [\partial q(y|\bar\theta) / \partial x]dy-(\theta-\bar \theta)\bar v(\bar\theta)q(\bar\theta|\bar\theta)\\
&+o(\epsilon+|\theta-\bar \theta|).
\end{align*}
Note that
$$
\beta \Delta= \beta [v(0)-\bar v(0)] +\epsilon.
$$
We derive
$$
\beta \int_{\bar\theta}^1 w(y) Q_0(dy|\bar\theta)
+ \beta {\theta_\gamma} \int_{\bar\theta}^1 \bar v(y) \frac{\partial q(y|\bar \theta)}{\partial x} dy-\beta \bar v(\bar\theta) q(\bar\theta| \bar\theta) {\theta_\gamma} =\beta w(0)+1.
$$
This completes the proof.
\qed

\begin{lemma}
Given the threshold $\bar \theta\in (0, 1)$,
 the stationary distribution  $\bar \mu$ has a probability density function (p.d.f.) $p(x)$ on $(0, 1]$, and $\bar \mu(\{0\})= \pi_0$, where $(p, \pi_0)$ is determined by
\begin{align}
&\pi_0=\int_{\bar\theta}^1 p(x) dx, \label{mup0} \\
&{\displaystyle{ p(x)=
\begin{cases}
 \displaystyle
 \int_0^x q(x|y) p(y) dy +\pi_0q(x|0), &0\le  x<\bar \theta,\\
 \displaystyle
\int_0^{\bar \theta} q(x|y) p(y) dy+\pi_0 q(x|0), &  \bar \theta \le x\le 1.
\end{cases}}}\label{pinv}
\end{align}
\end{lemma}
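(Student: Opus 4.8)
The plan is to write down the invariance equation for the stationary distribution under the $\bar\theta$-threshold policy and read off the two balance relations by testing it against the atom $\{0\}$ and against arbitrary Borel subsets of $(0,1]$. Under this policy the closed-loop transition operator $T$ acts on a measure $\nu$ by $(T\nu)(B)=\int_{[0,\bar\theta)}Q_0(B|x)\,\nu(dx)+\nu([\bar\theta,1])\,1_B(0)$: states below $\bar\theta$ use $a_0$ and move according to $Q_0(\cdot|x)$, while states at or above $\bar\theta$ use $a_1$ and are reset to $0$. By Lemma~\ref{lemma:pi0} the class ${\cal P}_0$ is preserved by $T$, and together with the convergence of the closed-loop chain to its stationary law this forces $\bar\mu\in{\cal P}_0$; hence $\bar\mu$ decomposes as a density $p$ on $(0,1]$ plus an atom of mass $\pi_0=\bar\mu(\{0\})$ at the origin, which is precisely the structure to be established.

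First I would establish \eqref{mup0} by testing the fixed-point identity $\bar\mu=T\bar\mu$ against $B=\{0\}$. Because $Q_0(\cdot|x)$ has a density for every $x<1$ by (A3)-ii), we have $Q_0(\{0\}|x)=0$, so the first term drops out and $\pi_0=\bar\mu([\bar\theta,1])$. Since the only atom sits at $0<\bar\theta$, we get $\bar\mu([\bar\theta,1])=\int_{\bar\theta}^1 p(x)\,dx$, which is \eqref{mup0}.

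Next I would recover the density equation \eqref{pinv} by testing against an arbitrary $B\in{\cal B}((0,1])$. Since the reset contributes mass only to $\{0\}$, the term $\nu([\bar\theta,1])\,1_B(0)$ vanishes and $\bar\mu(B)=\pi_0\,Q_0(B|0)+\int_{(0,\bar\theta)}Q_0(B|y)\,p(y)\,dy$. Writing $Q_0(B|y)=\int_B q(u|y)\,du$ and applying Fubini, and since $B$ is arbitrary, the integrand must equal the density, so $p(u)=\pi_0\,q(u|0)+\int_0^{\bar\theta} q(u|y)\,p(y)\,dy$. Finally I would use the support property $Q_0([y,1]|y)=1$ from (A3)-i), i.e. $q(u|y)=0$ for $u<y$, to trim the inner integral: for $u<\bar\theta$ the upper limit reduces to $u$, giving the first branch of \eqref{pinv}, while for $u\ge\bar\theta$ the whole interval $[0,\bar\theta)$ contributes, giving the second branch.

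The main obstacle is the structural step, namely justifying that $\bar\mu$ genuinely has the mixed atom-plus-density form and that the atom is located only at $0$; this is what makes the clean split of $\bar\mu=T\bar\mu$ into \eqref{mup0} and \eqref{pinv} legitimate, and it rests on the ${\cal P}_0$-invariance of Lemma~\ref{lemma:pi0} together with the ergodic convergence established earlier. Once that form is in hand, the remainder is the routine Fubini-and-support bookkeeping sketched above.
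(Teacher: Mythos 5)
Your argument is correct in substance but takes a genuinely different route from the paper's. You start from the known stationary law $\bar\mu$, establish the atom-plus-density structure abstractly, and then read off \eqref{mup0} and \eqref{pinv} by testing the invariance identity $\bar\mu=T\bar\mu$ against $\{0\}$ and against Borel subsets of $(0,1]$. The paper goes in the opposite direction: it writes the same invariance identity, observes that any pair $(p,\pi_0)$ satisfying \eqref{mup0}--\eqref{pinv} determines the stationary distribution, and then \emph{constructs} the pair: for a trial value of $\pi_0$, \eqref{pinv} restricted to $[0,\bar\theta]$ is a Volterra integral equation of the second kind with a unique solution $p\in C([0,\bar\theta],\mathbb{R})$ (citing Kress), the second branch of \eqref{pinv} then extends $p$ to $[\bar\theta,1]$, linearity of the solution in $\pi_0$ together with the normalization $\int_0^1 p(x)\,dx+\pi_0=1$ fixes $\pi_0$ uniquely, and \eqref{mup0} is recovered by integrating \eqref{pinv} over $[0,1]$. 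Your derivational route is shorter and avoids the Volterra machinery; the paper's constructive route buys two things yours does not deliver. First, uniqueness of the pair $(p,\pi_0)$, which is what the lemma's phrase ``is determined by'' asserts: your proof shows the pair attached to $\bar\mu$ \emph{satisfies} the system but not that the system cannot have other solutions. This is patchable either by the same Volterra uniqueness or by your own machinery run backwards: any normalized nonnegative solution of \eqref{mup0}--\eqref{pinv} defines an invariant probability measure, and uniqueness of the stationary distribution from Lemma \ref{lemma:erg} identifies it with $\bar\mu$. Second, regularity: the paper obtains $p$ continuous on $[0,\bar\theta]$, whereas your Fubini argument identifies $p$ only almost everywhere, after which one must pass to the version solving \eqref{pinv} pointwise.

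Two small points in your structural step. The passage from ``each iterate lies in ${\cal P}_0$ and converges to $\bar\mu$'' to ``$\bar\mu\in{\cal P}_0$'' needs that ${\cal P}_0$ is closed under the total-variation convergence of Lemma \ref{lemma:erg}; this is true but should be said (for a Lebesgue-null $S\subset(0,1]$ every iterate assigns $S$ mass zero, hence so does $\bar\mu$; weak convergence alone would not suffice, since an atom could smear). In fact you can bypass Lemma \ref{lemma:pi0} and the limiting argument entirely: since every $x\in[0,\bar\theta)$ satisfies $x<1$, the kernel $Q_0(\cdot|x)$ has a density there by (A3)-ii), so the closed-loop operator $T$ maps an \emph{arbitrary} probability measure into ${\cal P}_0$, and the single identity $\bar\mu=T\bar\mu$ already yields $\bar\mu\in{\cal P}_0$ in one step.
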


\proof
Let $\delta_0$ be the dirac measure at $x=0$. For any Borel subset $B\subset [0,1]$, we have  $\bar \mu(B) =\int_0^1 [Q_0(B|y)1_{(y<\bar \theta)} +\delta_0(B) 1_{(y\ge \bar \theta)}  ] \bar \mu(dy)$. Then it can be checked that $(p,\pi_0 )$ satisfying the above equations determines the stationary distribution.
Now we show there exists a unique solution.
Let $\pi_0>0$ be a constant to be determined.
Consider the Volterra integral equation
\begin{align}
p(x)=  \int_0^x q(x|y) p(y) dy +\pi_0q(x|0), \quad 0\le  x\le \bar \theta, \end{align}
and we obtain a unique solution $p$ in  $C([0, \bar \theta], \mathbb{R})$ (see e.g. \cite[p.33]{K89}). In fact $p$ is a nonnegative function with
$\int_0^{\bar \theta}p(x) dx>0$. Subsequently, we further determine $p\ge 0$ on $[\bar \theta, 1]$ by \eqref{pinv}. The solution $p$ on $[0,1]$ depends linearly on $\pi_0$ and so there exists a unique $\pi_0$ such that $\int_0^1 p(x)dx +\pi_0=1$.   After we uniquely solve $p$ for \eqref{pinv}, we integrate both sides of this equation on $[0, 1]$ and obtain $\int_0^1 p(x) dx =\int_0^{\bar\theta} p(x) dx+\pi_0$, which implies that \eqref{mup0} is satisfied.
 \qed

\subsection{Special Case}

Now we suppose $Q_0(dy|x)$ has uniform distribution on $[x, 1]$ for all fixed $0\le x<1$, and
 $R(x,z)=R_1(x)R_2(z)= x(c+z)$, where $R_1(x)=x$, $R_2(z)=c+z$ and $c>0$.
 In this case, (A2)-(A6) are satisfied.
For \eqref{csvb}, we have
\begin{align}\label{vzs}
 \bar v(x)=
 \begin{cases}
\displaystyle
\frac{\beta}{1-x}\int_x^1 \bar v(y) dy +R_1(x)R_2(\bar z), & 0\le x\le \bar \theta,\\
\displaystyle
 \beta \bar v(0)+R_1(x)R_2(\bar  z) +\bar \gamma,&  \bar \theta\le  x\le  1 .
\end{cases}
\end{align}
Denote $\varphi (x)= \int_x^1 \bar v(y) dy $. Then
$$
\dot \varphi(x)= -\frac{\beta}{1-x} \varphi -R_1(x) R_2(\bar z), \quad 0\le x\le
\bar \theta.
$$
Taking the initial condition $\varphi (0)$, we have
$$
\varphi(x) =\varphi (0)(1-x)^\beta -(1-x)^\beta \int_0^x \frac{R_1(\tau) R_2 (\bar z)}{(1-\tau)^\beta} d\tau.
$$
On $[0, \bar \theta]$,
\begin{align*}
\bar v(x)&= (1-x)^{\beta-1} \bar v(0) -\beta (1-x)^{\beta-1} \int_0^x \frac{R_1(\tau)R_2( \bar z)}
{(1-\tau)^\beta} d\tau +R_1(x)R_2(\bar z)\\
&= (1-x)^{\beta-1}\Big[\bar v(0)-\frac{\beta(c+\bar z)}{(1-\beta)(2-\beta)}\Big]
+(c+\bar z)\Big[\frac{\beta}{(1-\beta)(2-\beta)}+\frac{2x}{2-\beta}\Big].
\end{align*}

By the continuity of $\bar v$ and its form on $[\bar \theta, 1]$, we have \begin{align}\label{vb0sg}
\bar v(\bar \theta)= \beta \bar v(0)+\bar\theta(\bar  z+c) +\bar \gamma. \end{align}
Hence,
\begin{align}
[(1-\bar \theta)^{\beta-1}-\beta ]\bar v(0)=\frac{ \beta (c+\bar z)
[(1-\bar \theta)^{\beta-1}-1]}{(1-\beta)(2-\beta)} -\frac{\beta  (c+\bar z)\bar\theta}{2-\beta}
+\bar\gamma.\label{univ0}
\end{align}

On the other hand, since $\bar v$ is increasing and $\bar \theta$ is the threshold, we have
\begin{align*}
\bar v(\bar\theta)& = \beta\int_{\bar \theta}^1 [\beta \bar v(0) +(c+z) y +\bar\gamma] \frac{1}{1-\bar \theta} dy  +(c+\bar z)\bar \theta\\
&= \beta^2 \bar v(0) +\beta \bar\gamma
+\frac{\beta(c+\bar z)}{2} +(\frac{\beta }{2} +1) (c+\bar z)\bar\theta,
\end{align*}
which combined with \eqref{vb0sg} gives
\begin{align}
\frac{\beta}{2} (c+\bar z)(1+\bar \theta)= (\beta
\bar v(0) +\bar\gamma) (1-\beta). \label{unith}
\end{align}

Given the special form of $Q_0(dy|x)$, \eqref{weq} becomes
\begin{align} \label{wequni}
w(x)=
\begin{cases}
  \displaystyle
\frac{\beta}{ 1-x} \int_x^1 w(y)dy+R_1(x)R_2'(\bar z) z_\gamma,& 0\le x\le \bar \theta, \\
 \displaystyle
\beta w(0)+R_1(x)R_2'(\bar z)z_\gamma+1, &  \bar \theta<x\le 1.
\end{cases}
\end{align}
 The computation of $w$ now reduces to uniquely solving $w(0)$.
By the expression of $w$ on $[0, \bar\theta]$,  we have
\begin{align}
w(\bar\theta)&= \beta \int_{\bar\theta}^1 w(y)Q_0(dy|\bar \theta) +
R_1(\bar\theta)R_2'(\bar z) z_\gamma\nonumber\\
&=\beta^2 w(0)+\beta+R_1(\bar \theta)R_2'(\bar z) z_\gamma   + \frac{\beta R_2' (\bar z)z_\gamma}{1-\bar\theta}\int_{\bar \theta}^1 R_1(y)dy\nonumber \\
&= \beta^2 w(0)+\beta+\bar\theta z_\gamma + \beta z_\gamma \frac{1+\bar\theta}{2}. \label{ws1}
\end{align}
For $x\in [0, \bar\theta]$,
we further write
\begin{align}
w(x)=\frac{\beta}{1-x} \int_x^1 w(y)dy+R_1(x)R_2'(\bar z) z_\gamma,\nonumber
\end{align}
and solve
\begin{align}
w(x)= (1-x)^{\beta-1} w(0)+z_\gamma x-\beta z_\gamma
\Big[\frac{(1-x)^{\beta-1}}{(1-\beta)(2-\beta)} -\frac{1}{1-\beta} +\frac{1-x}{2-\beta}\Big],
\nonumber
\end{align}
which further gives
\begin{align}\label{ws2}
w(\bar  \theta)=  (1-\bar \theta)^{\beta-1} w(0)+ z_\gamma\bar \theta -\beta z_\gamma
\Big[\frac{(1-\bar\theta)^{\beta-1}}{(1-\beta)(2- \beta)} -\frac{1}{1-\beta} +\frac{1-\bar\theta}{2-\beta}\Big].
\end{align}
 By \eqref{ws1}--\eqref{ws2},  we have
\begin{align}\label{csew0}
[\beta^{-1}(1-\bar \theta)^{\beta-1}-\beta]w(0)= 1+z_\gamma \Big(\frac{1+\bar\theta}{2} +\frac{(1-\bar\theta)^{\beta-1}}{ (1-\beta)(2-\beta)} +\frac{1-\bar\theta}{2-\beta} - \frac{1}{1-\beta}  \Big).
\end{align}

Now from \eqref{pinv} we have
\begin{align}
p(x)=\begin{cases}
 \displaystyle
\int_0^x \frac{1}{1-y} p(y) dy+\pi_0,& 0\le x<\bar \theta, \\
 \displaystyle
\int_0^{\bar\theta} \frac{1}{1-y}p(y) dy +\pi_0,&\bar \theta\le x\le 1,
\end{cases}\nonumber
\end{align}
which determines
\begin{align}
p(x) = \begin{cases}
  \displaystyle
\frac{\pi_0}{1-x}, & 0\le x<\bar\theta,\\
 \displaystyle
\frac{\pi_0}{1-\bar \theta}, & \bar\theta\le x\le 1,
\end{cases}\nonumber
\end{align}
where $\pi_0= \frac{1}{2-\ln (1-\bar\theta)}$. We determine the mean field
\begin{align}\label{unizb}
\bar z=\int_0^{\bar\theta} x p(x)dx +\int_{\bar\theta}^1 x p(x) dx = \pi_0
\big(\frac{1-\bar\theta}{2}-\ln (1-\bar\theta)\big).
\end{align}
We further obtain $\frac{dz}{d\gamma}$ at $\bar \gamma$ as
\begin{align}\label{cse3z}
z_\gamma = \frac{\ln (1-\bar \theta)-3 +\frac{4}{1-\bar \theta}}{2[2-\ln (1-\bar \theta)]^2} \theta_\gamma.
\end{align}
We note that a perturbation analysis directly based on the general case \eqref{pinv} is more complicated.

Now \eqref{bsiga}
 reduces to
\begin{align}
\Big[\frac{\beta}{1-\bar \theta}  \int_{\bar\theta}^1 \frac{\bar v(y)}{1-\bar\theta} dy -\frac{\beta \bar v(\bar\theta)}{1-\bar\theta}\Big]
\theta_\gamma = 1+\beta w(0)- \beta \int_{\bar\theta}^1 \frac{w(y)}{1-\bar\theta} dy.\nonumber
\end{align}
By the expression of $\bar v$ in \eqref{vzs} and $w$ in \eqref{wequni} at $\theta=\bar \theta$, we obtain $$
\frac{(1-\beta)\bar v(\bar\theta)-\bar\theta (c+\bar z)}{1-\bar \theta}\theta_\gamma
=1+\beta w(0)-w(\bar \theta)+\bar\theta z_\gamma. $$
Recalling \eqref{vb0sg} and \eqref{ws1}, we have
\begin{align} \label{cse3}
\frac{(1-\beta)[\beta \bar v(0)+\bar\gamma]-\beta \bar \theta (\bar z+c) }{1-\bar \theta}\theta_\gamma
- \beta (1-\beta )
w(0)+   \frac{1+\bar\theta}{2}\beta z_\gamma = 1-\beta.
\end{align}

By combining  \eqref{univ0}, \eqref{unith} and \eqref{unizb},  we have
\begin{align}
&\bar v(0)=[(1-\bar \theta)^{\beta-1}-\beta ]^{-1}\Big[\frac{ \beta (c+\bar z)
[(1-\bar \theta)^{\beta-1}-1]}{(1-\beta)(2-\beta)} -\frac{\beta  (c+\bar z)\bar\theta}{2-\beta}
+\bar \gamma\Big], \label{mfv1}  \\
&\bar \theta= \frac{2(1-\beta)(\beta  \bar v(0) +\bar \gamma) }{\beta (c+\bar z)}-1, \label{mfs1} \\
&\bar z=\frac{1}{2-\ln(1-\bar\theta)} \big(\frac{1-\bar \theta}{2}-\ln (1-\bar \theta)\big). \label{mfz1}
\end{align}
Next, combining \eqref{csew0}, \eqref{cse3z} and \eqref{cse3},  we obtain
\begin{align}
& \frac{(1-\beta)[\beta \bar v(0)+\bar\gamma]-\beta \bar \theta (\bar z+c) }{1-\bar \theta}\theta_\gamma
- \beta (1-\beta )
w(0)+   \frac{1+\bar\theta}{2}\beta z_\gamma = 1-\beta , \label{mfsen1} \\
&[\beta^{-1}(1-\bar \theta)^{\beta-1}-\beta]w(0)= 1+z_\gamma \Big(\frac{1+\bar\theta}{2} +\frac{(1-\bar\theta)^{\beta-1}}{ (1-\beta)(2-\beta)} +\frac{1-\bar\theta}{2-\beta} - \frac{1}{1-\beta}  \Big), \label{mfsen2}\\
&z_\gamma = \frac{\ln (1-\bar \theta)-3 +\frac{4}{1-\bar \theta}}{2[2-\ln (1-\bar \theta)]^2} \theta_\gamma. \label{mfsen3}
\end{align}
 After $(\bar v(0),\bar z, \bar \theta )$ has been determined from \eqref{mfv1}-\eqref{mfz1}, the above gives a linear  equation system with unknowns $w(0)$, $\theta_\gamma$ and $ z_\gamma$.
\begin{example}
We take $R_1(x)=x$ and $R_2(z)= 0.2+z$, $\bar \gamma=0.5$, $\beta=0.9$.\footnote{Corrected on Oct 10, 2020 by adding the value of $\beta$ and correcting the parameter in $R_2(z)$.}
We numerically solve \eqref{mfv1}-\eqref{mfz1} to obtain $\bar v(0)= 3.497854,\  \bar \theta =0.485162,\  \bar z =  0.345854$, and \eqref{mfsen1}-\eqref{mfsen3}
 to obtain  $ w(0)= 4.563055,\  \theta_\gamma= 1.162861,\
 z_\gamma=  0.336380$. The curves of $v(x)$ and $w(x)$ are displayed in Fig. 1, where $w$ has a discontinuity at $x=\bar \theta$ as discussed in Remark \ref{re:dis}. The positive value of $\theta_\gamma$ implies the threshold increases with $\gamma$, as asserted in Theorem \ref{theorem:mcs}.
\end{example}
\begin{figure}[t]
\begin{center}
\psfig{file=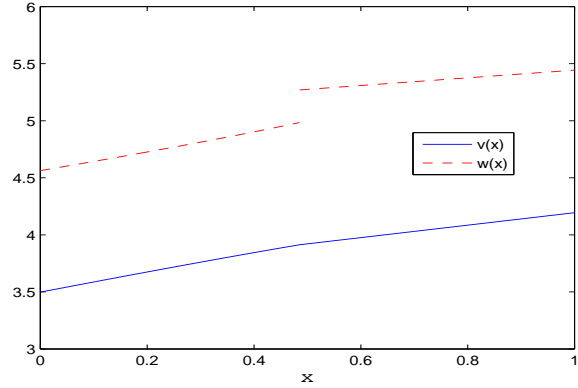, width=3.6in, height=2.2in}\\
\end{center}
\caption{ Value function $v$ and perturbation function $w$ } \label{fig1}
\end{figure}

\section{Conclusion}
\label{sec:con}

This paper considers mean field games in a framework of binary Markov decision processes (MDP) and establishes existence and uniqueness  of  stationary  equilibria. The resulting policy has a threshold structure.
We further analyze comparative statics to address the impact of parameter variations in the model.

For future research, there are some potentially interesting extensions. One may consider a heterogenous population and study the emergence of free-riders who care more about their own effort costs and have less incentive to
contribute to the common benefit of the population. Another modelling of a quite different nature involves negative externalities where other players' improvement brings more pressure on the player in question. For instance, this arises in competitions for   market share. The modelling and  analysis of the agent behavior will be of interest.

\section*{Appendix A: Preliminaries on Ergodicity }
\renewcommand{\theequation}{A.\arabic{equation}}
\setcounter{equation}{0}
\renewcommand{\thetheorem}{A.\arabic{theorem}}
\setcounter{theorem}{0}
\renewcommand{\thelemma}{A.\arabic{lemma}}
\setcounter{lemma}{0}
\renewcommand{\theremark}{A.\arabic{remark}}
\setcounter{remark}{0}

Assume (A3).
The next two lemmas
determine the limiting distribution of the state process  under threshold policies.

\begin{lemma}\label{lemma:limd}
i)
If $\theta=0$, then the distribution of $x_t^i$ remains to be  the dirac measure $\delta_0$ for all $t\ge 1$, for any $x_0^i$.

ii) If $\theta=1$ or $\theta=1^+$, the distribution of $x_t^i$ converges to the dirac measure $\delta_1$ weakly.
\end{lemma}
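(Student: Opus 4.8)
The plan is to treat the two parts separately, with part i) being essentially immediate and part ii) resting on the monotonicity of the state process together with (A3)-iii).

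For part i), when $\theta=0$ the threshold policy prescribes $a^i(x)=a_1$ for every $x\in[0,1]$. Thus, starting from any $x_0^i$, applying $a_1$ at $t=0$ forces $x_1^i=0$ by \eqref{xa1}; and since $0\ge\theta=0$, the action $a_1$ is again selected at state $0$, so $x_{t+1}^i=0$ whenever $x_t^i=0$. By induction $x_t^i=0$ almost surely for all $t\ge1$, so its law is $\delta_0$.

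For part ii), I would first reduce both subcases to the pure inaction dynamics governed by $Q_0$. Under $\theta=1^+$ the policy is $a_0$ everywhere, so $x_{t+1}^i$ is always drawn from $Q_0(\cdot|x_t^i)$; since $Q_0([x,1]|x)=1$, each trajectory is monotonically nondecreasing and hence converges almost surely to a limit $X_\infty\in[0,1]$. Under $\theta=1$ the action $a_1$ is used only at the single state $x=1$, but by (A3)-ii) the kernel $Q_0(\cdot|x)$ has a density for every $x<1$ and therefore puts no mass at the point $1$; consequently, starting from any $x_0^i<1$ the process almost surely never reaches $1$ in finite time, so it coincides with the inaction process, while from $x_0^i=1$ a single reset to $0$ occurs and the same conclusion applies thereafter. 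In either case the state is eventually monotone nondecreasing with an almost sure limit $X_\infty$.

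The key step is to show $X_\infty=1$ almost surely, and for this I would invoke (A3)-iii). Fix $\delta\in(0,1)$ and set $p_\delta=\inf_x Q_0([1-\delta,1]|x)>0$. By monotonicity the events $\{x_t^i<1-\delta\}$ are nonincreasing in $t$, and on $\{x_{t-1}^i<1-\delta\}$ the conditional probability of failing to enter $[1-\delta,1]$ at the next step is at most $1-p_\delta$; iterating gives $P(x_t^i<1-\delta)\le(1-p_\delta)^t\to0$. Hence $P(x_t^i\ge1-\delta)\to1$ for every $\delta$, which forces $X_\infty=1$ a.s.\ and, more directly, yields weak convergence of the law of $x_t^i$ to $\delta_1$: its distribution function tends to $0$ at every $y<1$ and equals $1$ for $y\ge1$, matching the distribution function of $\delta_1$ at all of its continuity points. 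I expect the main obstacle to be the careful handling of the $\theta=1$ case — separating it from pure inaction and justifying, via the absence of an atom of $Q_0$ at $1$, that the reset action is triggered only with probability zero so that the monotone-convergence machinery applies; the geometric bound from (A3)-iii then drives the limit to $1$ uniformly in the initial state.
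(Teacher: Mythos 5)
Your proof is correct, and it takes the same route the paper intends: the published proof is just ``Part i) is obvious and part ii) follows from (A3),'' and your argument---monotone trajectories under inaction, ruling out the reset at $x=1$ via the atomlessness of $Q_0(\cdot|x)$ from (A3)-ii), and the geometric bound $P(x_t^i<1-\delta)\le(1-p_\delta)^t$ with $p_\delta=\inf_x Q_0([1-\delta,1]|x)>0$ from (A3)-iii)---is exactly the natural fleshing-out of that one-line proof. No gaps; your careful separation of the $\theta=1$ case (a single possible reset from $x_0^i=1$, after which the process a.s.\ coincides with the pure inaction dynamics) is the only point requiring care, and you handle it correctly.
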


\begin{proof}
Part i) is obvious and part ii)  follows from (A3).  \qed
\end{proof}

Let $x_t^{i,\theta}$ denote the state process generated by the $\theta$-threshold policy  with  $\theta\in (0, 1)$, and let $P_\theta^t(x, \cdot)$ be the distribution of $x_t^{i,\theta}$
 given $x_0^{i,\theta}=x$.

\begin{lemma} \label{lemma:erg}
For $\theta\in (0, 1)$,  $\{x_t^{i,\theta}, t\ge 0\}$
is uniformly ergodic  with stationary probability distribution $\pi_\theta$, i.e.,
\begin{align}
\sup_{x\in \bS}\|P_\theta^t(x, \cdot)
-\pi_\theta\|_{\rm TV}\le K r^t, \label{ppit}
\end{align}
for some constants $K>0$ and $r\in (0, 1)$,
where $\|\cdot\|_{\rm TV}$ is the total variation norm of signed measures.
\end{lemma}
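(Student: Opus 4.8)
The plan is to realize $\{x_t^{i,\theta}\}$ as a time-homogeneous Markov chain on $\bS$ whose one-step kernel is $P_\theta(x,\cdot)=Q_0(\cdot|x)$ for $x<\theta$ and $P_\theta(x,\cdot)=\delta_0$ for $x\ge\theta$, and then to derive \eqref{ppit} from a \emph{uniform Doeblin minorization} of the form $P_\theta^m(x,\cdot)\ge \epsilon\,\delta_0(\cdot)$ holding at a single fixed time $m$ for all $x\in\bS$. Once such a minorization is available, existence and uniqueness of $\pi_\theta$ together with the geometric bound \eqref{ppit} (with $r=(1-\epsilon)^{1/m}$) follow from the classical coupling/splitting argument for chains satisfying Doeblin's condition. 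The structural reason this works is that the recurrent atom is the single state $0$: from any $x\ge\theta$ the chain is reset to $0$ in one step, and from $0$ the one-step law $Q_0(\cdot|0)$ is fixed, so $\{0\}$ is a genuine atom and $\delta_0$ is the natural minorizing measure.

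First I would record the two structural constants that (A3) supplies. Taking $\delta=1-\theta$ in (A3)-iii) gives $\epsilon_0:=\inf_{x}Q_0([\theta,1]|x)>0$, the uniform probability of crossing the threshold (hence being reset to $0$ at the next step) in a single move; this is exactly the role flagged for (A3)-iii) in the text. Since $Q_0(\cdot|0)$ has a positive density on $(0,1)$ by (A3)-ii) and $\theta\in(0,1)$, I also obtain $\epsilon_1:=Q_0((0,\theta)|0)>0$. These are the only two quantities the minorization will use.

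The key step, and the one that needs care, is to produce the minorization at a \emph{single} time $m$ for \emph{all} $x$ simultaneously. A naive attempt with $m=2$ fails: from $x\ge\theta$ the chain first moves to $0$ and then spreads out according to $Q_0(\cdot|0)$, which has no atom at $0$, so $P_\theta^2(x,\{0\})=0$. This phase mismatch is resolved by exploiting aperiodicity---$0$ returns to $0$ in both two steps ($0\to[\theta,1]\to0$) and three steps ($0\to(0,\theta)\to[\theta,1]\to0$)---and choosing $m=4$. For $x<\theta$ I would use the path $x\to[\theta,1]\to0\to[\theta,1]\to0$, of probability at least $\epsilon_0^2$ (one threshold crossing from $x$ and one from $0$, the intermediate and final resets being forced). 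For $x\ge\theta$ I would use $x\to0\to(0,\theta)\to[\theta,1]\to0$, of probability at least $\epsilon_1\epsilon_0$ (forced reset, then landing in $(0,\theta)$ from $0$, then a crossing). Hence $P_\theta^4(x,\{0\})\ge \epsilon:=\epsilon_0\min(\epsilon_0,\epsilon_1)>0$ for every $x$, and since $P_\theta^4(x,B)\ge P_\theta^4(x,\{0\})\,1_B(0)\ge \epsilon\,\delta_0(B)$, the uniform Doeblin condition holds with $m=4$ and minorizing measure $\delta_0$.

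Finally I would invoke the standard consequence of a uniform minorization: the chain admits a unique invariant probability measure $\pi_\theta$ and $\sup_{x}\|P_\theta^{t}(x,\cdot)-\pi_\theta\|_{\rm TV}\le K r^{t}$ with $r=(1-\epsilon)^{1/4}$ and a constant $K$, which is precisely \eqref{ppit}. The main obstacle is the fixed-time uniform return estimate of the previous paragraph: verifying that the one-step threshold-crossing probability is bounded below uniformly in the starting state (via (A3)-iii)) and, more delicately, arranging that the two regions $\{x<\theta\}$ and $\{x\ge\theta\}$ both hit $0$ at the \emph{common} time $m=4$ despite the atom-induced phase structure. Everything after this is a routine appeal to Doeblin's theorem.
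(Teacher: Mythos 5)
Your proof is correct, and it is a self-contained variant of what the paper does. The paper's own proof is essentially a pointer: it invokes the ergodicity theorem of \cite{HM16Chen} (proved there under (A3$^\prime$)) together with the Meyn--Tweedie small-set machinery, exhibiting only the minorization at the atom, namely $P(x_2^{i,\theta}=0\,|\,x_0^{i,\theta}=0)\ge \int_\theta^1 q(y|0)\,dy=:\epsilon_0$ in \eqref{se0}, i.e.\ a two-step minorization on the small set $C_s=\{0\}$, with (A3)-iii) supplying the uniform crossing probability that controls the rate $r$. You instead verify the classical Doeblin condition directly on the \emph{whole} state space: the same two ingredients (the atom at $0$ created by the forced reset, and $\epsilon_0=\inf_x Q_0([\theta,1]|x)>0$ from (A3)-iii) with $\delta=1-\theta$), plus $\epsilon_1=Q_0((0,\theta)|0)>0$ from the positive density in (A3)-ii), give $P_\theta^4(x,\cdot)\ge \epsilon\,\delta_0(\cdot)$ for all $x$ with $\epsilon=\epsilon_0\min(\epsilon_0,\epsilon_1)$; your $m=4$ construction with two different paths for $\{x<\theta\}$ and $\{x\ge\theta\}$ correctly resolves the phase obstruction you identify (indeed $P_\theta^2(x,\{0\})=0$ for $x\ge\theta$ since $Q_0(\cdot|0)$ is nonatomic, and a uniform $m=3$ bound would fail on $\{x<\theta\}$ because $Q_0((x,\theta)|x)$ is not bounded below as $x\uparrow\theta$). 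What each route buys: the paper's atom-based minorization is shorter given the cited theorem, and its estimate \eqref{se0} is reused verbatim for the perturbed-threshold bound \eqref{se0p} needed in Lemma~\ref{lemma:sipi}; your whole-space minorization avoids any appeal to external machinery and yields an explicit contraction constant and rate $r=(1-\epsilon)^{1/4}$, at the cost of the slightly fiddly path bookkeeping.
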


\proof The proof is similar to that of the ergodicity
theorem in \cite{HM16Chen}, which assumed (A3$^\prime$).
We use (A3)-iii) to estimate $r$. \qed

We  take $C_s=\{0\}$ as a small set and $\theta\in (0, 1)$. The $\theta$-threshold policy gives
\begin{align}
P(x_2^{i,\theta}=0|x_0^{i,\theta}=0) \ge\int_\theta^1 q(y|0)dy =:  \epsilon_0. \label{se0}
\end{align}
So for any Borel set  $B$,
$ P(x_2^{i,\theta}\in B|x_0^{i,\theta}=0)\ge \epsilon_0 \delta_0(B)$, where $\delta_0$ is the dirac measure. For $\theta'$ in a small neighborhood of $\theta$, we can ensure that the $\theta'$-threshold policy gives
\begin{align}
P(x_2^{i,\theta'}\in B|x_0^{i,\theta'}=0)\ge \frac{\epsilon_0}{2} \delta_0(B). \label{se0p}
\end{align}

\begin{lemma} \label{lemma:sipi}
Suppose $\theta, \theta'\in (0,1)$ for two threshold policies.
 Let the corresponding stationary distributions of the state process by  $\pi$ and $\pi'$.
Then
$$
\lim_{\theta'\to \theta}\|\pi'-\pi\|_{\rm TV}=0.
$$
\end{lemma}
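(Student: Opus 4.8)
The plan is to control $\pi'-\pi$ through a telescoping perturbation series in which the \emph{fixed} measure $\pi$ is kept on the left while the iterated kernel is the one-step transition $P_{\theta'}$ of the $\theta'$-threshold chain. First I would record the kernels explicitly: writing $P_\theta(x,\cdot)=Q_0(\cdot|x)1_{\{x<\theta\}}+\delta_0(\cdot)1_{\{x\ge\theta\}}$, the difference $D:=P_{\theta'}-P_\theta$ acts on any measure only through the states lying in the interval $I$ between $\theta$ and $\theta'$, since for $x<\theta\wedge\theta'$ and for $x\ge\theta\vee\theta'$ the two kernels coincide. Using the stationarity relations $\pi=\pi P_\theta$ and $\pi'=\pi'P_{\theta'}$ I obtain, after adding and subtracting $\pi P_{\theta'}$,
\[
\pi'-\pi=\pi D+(\pi'-\pi)P_{\theta'}=\sum_{k=0}^{\infty}\pi D\,P_{\theta'}^{\,k},
\]
where the remainder $(\pi'-\pi)P_{\theta'}^{\,n}$ vanishes in total variation by the contraction bound below. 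Note $\pi D$ is a signed measure of total mass zero, since $\pi D([0,1])=1-1=0$.

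The contraction step is where uniform ergodicity (Lemma~\ref{lemma:erg}) enters. For any zero-mass signed measure $\nu$, splitting $\nu$ into its positive and negative parts of equal mass turns the bound $\sup_x\|P_{\theta'}^{\,k}(x,\cdot)-\pi'\|_{\rm TV}\le K'(r')^{k}$ into $\|\nu P_{\theta'}^{\,k}\|_{\rm TV}\le K'(r')^{k}\|\nu\|_{\rm TV}$. Applying this with $\nu=\pi D$ and summing the geometric series gives $\|\pi'-\pi\|_{\rm TV}\le C\,\|\pi D\|_{\rm TV}$ with $C=1+K'r'/(1-r')$. It is essential that $K'$ and $r'$ be chosen \emph{uniformly} for $\theta'$ in a neighbourhood of $\theta$. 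I would obtain this exactly as in Lemma~\ref{lemma:erg}: fixing $\delta$ with $1-\delta>\theta$, assumption (A3)-iii) gives $\inf_x Q_0([1-\delta,1]|x)=:p_\delta>0$, so from any state the chain jumps into the action region $[\theta',1]$ and is reset to $0$ within two steps with probability at least $p_\delta$, uniformly in $\theta'$ near $\theta$. Thus $\{0\}$ is an atom that is uniformly accessible with a geometrically bounded return time, which (together with the uniform minorization \eqref{se0p}) yields ergodic constants $K',r'$ independent of $\theta'$.

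It then remains to estimate $\|\pi D\|_{\rm TV}$. Since $D(x,\cdot)$ is supported on $I$ and is there a difference of two probability kernels, $\|\pi D\|_{\rm TV}\le\int_I\|Q_0(\cdot|x)-\delta_0\|_{\rm TV}\,\pi(dx)\le 2\,\pi(I)$. Because $\theta\in(0,1)$ is fixed and, by Lemma~\ref{lemma:pi0}, $\pi\in{\cal P}_0$ carries its only atom at $0$ and is absolutely continuous on $(0,1]$, we have $\pi(\{\theta\})=0$; as $\theta'\to\theta$ the intervals $I$ shrink to $\{\theta\}$, so $\pi(I)\to 0$ by continuity of the (fixed) measure $\pi$. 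Combining the two displays gives $\|\pi'-\pi\|_{\rm TV}\le 2C\,\pi(I)\to 0$, which is the claim. I expect the one genuine obstacle to be precisely the uniformity of $K',r'$ in $\theta'$: the whole estimate collapses if $C=C(\theta')$ blows up as $\theta'\to\theta$, so the crux is to extract from \eqref{se0p} and (A3)-iii) a minorization valid for all states and all $\theta'$ near $\theta$ with constants independent of $\theta'$; everything else is the elementary perturbation identity and the atomlessness of $\pi$ at $\theta$.
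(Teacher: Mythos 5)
Your proof is correct, but it takes a genuinely different route from the paper's. The paper argues qualitatively: it uses the uniform (in $\theta'$) ergodicity bound to truncate at a fixed horizon $k_0$ with $2Cr^{k_0}\le \epsilon/2$, and then proves $\|P^{k_0}_{\theta'}(0,\cdot)-P^{k_0}_{\theta}(0,\cdot)\|_{\rm TV}\to 0$ by the one-step recursion \eqref{psisi}, whose driving term $P^t_\theta\bigl(0,[\underline\theta,\overline\theta)\bigr)$ vanishes because every finite-time law $P^t_\theta(0,\cdot)$ lies in ${\cal P}_0$ (Lemma \ref{lemma:pi0}) and so charges no mass to the shrinking interval; the factor $4$ per step in \eqref{psisi} is harmless since $k_0$ is fixed before $\theta'\to\theta$. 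You instead use the exact perturbation identity $\pi'-\pi=\sum_{k\ge 0}\pi D\,P_{\theta'}^{k}$ together with the contraction of zero-mass signed measures, which yields the quantitative bound $\|\pi'-\pi\|_{\rm TV}\le 2\bigl(1+K'r'/(1-r')\bigr)\pi(I)$ -- a Lipschitz-type modulus the paper never produces -- and it only requires regularity (no atom at $\theta$) of the single stationary measure $\pi$, whereas the paper needs it for all the finite-time laws, which is exactly why it proves Lemma \ref{lemma:pi0} by induction. Both arguments rest on the same two pillars: uniform-in-$\theta'$ ergodic constants extracted from \eqref{se0p} and (A3)-iii) via \cite{MT09}, and vanishing $\pi$-mass of the interval between the thresholds. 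Two small points of bookkeeping in your sketch: the "reset within two steps" claim is a statement about the hitting time of the atom $\{0\}$ (from $x\ge\theta'$ the reset occurs at step $1$ with probability one, from $x<\theta'$ at step $2$ with probability at least $p_\delta$, giving $P_x(\tau_0>2n)\le(1-p_\delta)^n$ uniformly), not a fixed-time minorization $P^2_{\theta'}(x,\cdot)\ge p_\delta\,\delta_0$, which in fact fails for $x\ge\theta'$ since $Q_0(\cdot|0)$ is atomless; one also needs uniform aperiodicity of the atom (returns at times $2$ and $3$ both have probability bounded below for $\theta'$ near $\theta$, using the positive density $q(\cdot|0)$). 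This matches the level of detail at which the paper itself invokes \cite{MT09}, so it is not a gap. Finally, $\pi(\{\theta\})=0$ can be seen even more directly than through Lemma \ref{lemma:pi0}: stationarity gives $\pi(B)=\int_{\{y<\theta\}}Q_0(B|y)\,\pi(dy)$ for $B\subset(0,1]$, and $Q_0(\cdot|y)$ has a density for $y<1$.
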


\proof Fix $\theta\in (0, 1)$. By \eqref{se0p} and \cite{MT09},
there exist a  neighborhood
$I_0=(\theta- \kappa_0, \theta+\kappa_0)\subset (0, 1)$  and two constants $C$, $r\in (0, 1)$    such that for all
$\theta'\in I_0$,
$$
\|P^t_\theta(x, \cdot) -\pi \|_{\rm TV} \le C r^t, \quad \|P^t_{\theta'}(x, \cdot) -\pi' \|_{\rm TV} \le C r^t, \quad \forall x \in [0, 1].
$$
Subsequently,
$$
\|\pi'-\pi\|_{\rm TV}\le  \|P_{\theta'}^t(0, \cdot)-P_\theta^t(0, \cdot)  \|_{\rm TV}+
2Cr^t.
$$ For any given $\epsilon>0$, fix a large $k_0$ such that $2Cr^{k_0}\le \epsilon/2$. We  show for all $\theta'$ sufficiently close to $\theta$,
$$
\|P^{k_0}_{\theta'} (0, \cdot)-P^{k_0}_\theta(0, \cdot)\|_{\rm TV} \le \epsilon/2. $$
Given two probability measures $\mu_t$, $\mu_t'$, define the  probability measures $\mu_{t+1}$ and $\mu_{t+1}'$,
$$\mu_{t+1}(B)=\int_{\bf S} P_\theta(y, B)\mu_t (dy), \quad
\mu'_{t+1}(B)=\int_{\bf S} P_{\theta'}(y, B)\mu_t' (dy),
$$
for Borel set $B\subset [0, 1]$.
Then
\begin{align*}
|\mu_{t+1}(B)-\mu_{t+1}'(B) | \le \ & |\int_{\bf S}
P_\theta(y, B)\mu_t (dy)-\int_{\bf S} P_{\theta'}(y, B)\mu_t (dy)|\\
&+ | \int_{\bf S} P_{\theta'}(y, B)\mu_t (dy)  - \int_{\bf S} P_{\theta'}(y, B)\mu_t' (dy)|\\
& =: D_1+D_2.
 \end{align*}
We have
\begin{align*}
D_2= \Big| \int_{\bf S} P_{\theta'}(y, B)\mu_t (dy)  - \int_{\bf S} P_{\theta'}(y, B)\mu_t' (dy)\Big|
\le 2\|\mu_t-\mu'_t\|_{\rm TV}.
\end{align*}
Denote $\underline \theta=\min\{\theta, \theta'\}$ and $\overline \theta= \max\{\theta, \theta'\}$.
Then
\begin{align*}
D_1&= \Big|-\int_{[\underline\theta, \overline\theta)}Q_0(B|y) \mu_t(dy) +1_B(0)
\mu_t ([\underline\theta, \overline\theta))\Big|
\le \mu_t([\underline\theta, \overline\theta)).
\end{align*}
Setting $\mu_0=\mu_0'=\delta_0$, then $\mu_t= P_\theta^t(0, \cdot)$, $\mu_t'= P_{\theta'}^t(0, \cdot)$.   Hence,
\begin{align}
|P_{\theta'}^{t+1}(0, B)-P_\theta^{t+1} (0, B) |
\le\ & 2\|P_{\theta'}^{t}(0, \cdot)-P_\theta^{t} (0, \cdot) \|_{\rm TV}  + P^t_\theta(0, [\underline\theta, \overline\theta)),
\end{align}
which implies
\begin{align}
\|P_{\theta'}^{t+1}(0, \cdot)-P_\theta^{t+1} (0, \cdot) \|_{\rm TV}
\le\ & 4\|P_{\theta'}^{t}(0, \cdot)-P_\theta^{t} (0, \cdot) \|_{\rm TV}  + 2P^t_\theta(0, [\theta, \theta')). \label{psisi}
\end{align}
For $\mu_0=\mu_0'=\delta_0$, we have $P^1_\theta(0, \cdot)=P^1_{\theta'}(0,\cdot)$. It is clear from \eqref{psisi} and Lemma \ref{lemma:pi0}
 that for each $t\ge 1$,
$$
\lim_{\theta'\to \theta}\|P_{\theta'}^t (0, \cdot)- P_\theta^t (0, \cdot) \|_{\rm TV} =0, \quad \lim_{\theta'\to \theta}  P^t_\theta(0, [\underline\theta, \overline\theta))=0.
$$
Therefore, for the fixed $k_0$, there exists $\delta>0$ such
that for all $\theta'$ satisfying  $|\theta'-\theta|<\delta$,
$ \|P_{\theta'}^{k_0} (0, \cdot)- P_\theta^{k_0} (0, \cdot) \|_{\rm TV} <
\frac{\epsilon}{2}$ and   $\|\pi'-\pi\|_{\rm TV}\le \epsilon$.   The lemma follows. \qed

\section*{Appendix B:  Cycle Average of A Regenerative Process}
\renewcommand{\theequation}{B.\arabic{equation}}
\setcounter{equation}{0}
\renewcommand{\thetheorem}{B.\arabic{theorem}}
\setcounter{theorem}{0}
\renewcommand{\thelemma}{B.\arabic{lemma}}
\setcounter{lemma}{0}
\renewcommand{\theremark}{B.\arabic{remark}}
\setcounter{remark}{0}

Let $0<r<r'<1$.
Consider a Markov process $\{Y_t, t\ge 0\}$ with  state space $[0, 1]$
  and transition kernel $Q_Y(\cdot|y)$ which satisfies
 $Q_Y([y,1]|y)=1$ for any $y\in [0, 1]$ and is stochastically increasing. Suppose $Y_0\equiv y_0<r$.
Define the stopping times
$$
\tau=\inf \{t|Y_t\ge r\},\quad  \tau'=\inf\{t|Y_t\ge r'\}.
$$

\begin{lemma} \label{lemma:yav}
If $E\tau<\infty $, then  $ E\sum_{t=0}^\tau Y_t<\infty$ and
\begin{align}
\frac{E\sum_{t=0}^\tau Y_t}{1+E\tau }
 =\frac{EY_0+ EY_1+ \sum_{k=1}^\infty E(Y_{k+1} 1_{\{Y_k<r\}})}{2+
 \sum_{k=1 }^\infty P(Y_k<r)  }.
 \end{align}
\end{lemma}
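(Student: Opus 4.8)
The plan is to exploit the monotonicity of the trajectories. Since $Q_Y([y,1]|y)=1$, we have $Y_0\le Y_1\le\cdots$ almost surely, so every level-crossing event collapses to a single-time event, after which both the numerator and the denominator become re-indexed sums that match termwise. Note that the $\tau'$ and stochastic-monotonicity structure of the surrounding setup plays no role in this particular identity.

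First I would record the identity $\{\tau>t\}=\{Y_t<r\}$ for every $t\ge 0$: the event $\tau>t$ says $Y_s<r$ for all $s\le t$, and by monotonicity $Y_t=\max_{s\le t}Y_s$, so this is the same as $Y_t<r$. Because $Y_0\equiv y_0<r$, we have $P(Y_0<r)=1$ and $\tau\ge 1$ surely. Summing the tail probabilities then gives the denominator:
\begin{align*}
1+E\tau=1+\sum_{t=0}^\infty P(\tau>t)=1+\sum_{t=0}^\infty P(Y_t<r)=2+\sum_{k=1}^\infty P(Y_k<r).
\end{align*}

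For the numerator I would write $\sum_{t=0}^\tau Y_t=\sum_{t=0}^\infty Y_t 1_{\{\tau\ge t\}}$ and, since every summand is nonnegative, invoke Tonelli to interchange expectation and sum, yielding $E\sum_{t=0}^\tau Y_t=\sum_{t=0}^\infty E(Y_t 1_{\{\tau\ge t\}})$. The $t=0$ term is $EY_0$, while for $t\ge 1$ we use $\{\tau\ge t\}=\{\tau>t-1\}=\{Y_{t-1}<r\}$. Re-indexing by $k=t-1$ and noting $1_{\{Y_0<r\}}=1$ separates out the $k=0$ term as $EY_1$ and leaves exactly
\begin{align*}
E\sum_{t=0}^\tau Y_t=EY_0+EY_1+\sum_{k=1}^\infty E\big(Y_{k+1}1_{\{Y_k<r\}}\big).
\end{align*}
Dividing the two displayed expressions gives the asserted formula.

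Finiteness comes for free: since $Y_t\in[0,1]$ we have $\sum_{t=0}^\tau Y_t\le \tau+1$, hence $E\sum_{t=0}^\tau Y_t\le 1+E\tau<\infty$ under the hypothesis $E\tau<\infty$, which also legitimizes the Tonelli step. I do not expect a genuine obstacle here; the only point requiring care is the bookkeeping of indices, namely that the deterministic initial condition $Y_0<r$ both forces the denominator sum to begin at $k=1$ and produces the isolated $EY_1$ term in the numerator.
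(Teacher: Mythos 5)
Your proof is correct and takes essentially the same route as the paper's: both arguments rest on the monotonicity identity $\{\tau\ge k\}=\{Y_{k-1}<r\}$, the tail-sum formula $E\tau=\sum_{k\ge 1}P(\tau\ge k)$, and a termwise (Tonelli-type) rearrangement of $E\sum_{t=0}^\tau Y_t$, with finiteness following from $0\le Y_t\le 1$ so that $E\sum_{t=0}^\tau Y_t\le 1+E\tau$. Your explicit bookkeeping that $Y_0=y_0<r$ gives $P(Y_0<r)=1$, which both shifts the denominator sum to start at $k=1$ and isolates the $EY_1$ term, is exactly what the paper does implicitly.
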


\proof Since $0\le Y_t\le 1 $ w.p. 1, $ E\sum_{t=0}^\tau Y_t\le 1+E\tau$.
 It is clear that $\{\tau\ge k\}= \{Y_{k-1}<r\}$ for
 $k\ge 1$.
 We have
\begin{align}
E\tau &= \sum_{k=1}^\infty P(\tau\ge k)=1+ \sum_{k=1}^\infty P(Y_{k}<r), \label{etau}
\end{align}
and
\begin{align*}
E\sum_{t=0}^\tau Y_t&= E \sum_{k=1}^\infty \left(\sum_{t=0}^kY_t\right)1_{\{\tau=k\}}  \\
&  =EY_0+EY_1+ \sum_{k=2}^\infty E(Y_{k} 1_{\{\tau\ge k\}})\\
& =EY_0+ EY_1+\sum_{k=1}^\infty E(Y_{k+1} 1_{\{Y_k<r\}}).
\end{align*}
The lemma follows. \qed

\begin{lemma} \label{lemma:tatap}
Assume $E\tau'<\infty$.
We have
\begin{align}
\frac{E\sum_{t=0}^\tau Y_t}{1+E\tau } \le
\frac{E\sum_{t=0}^{\tau'} Y_t}{1+E\tau' }. \label{yly}
\end{align}
\end{lemma}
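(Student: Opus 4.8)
The plan is to reduce \eqref{yly} to a one-step mean comparison and to use the function $\kappa(x):=\int_0^1 y\,Q_Y(dy|x)$, the expected image of a point $x$ under the kernel, as a separator. Since $Q_Y$ is stochastically increasing, $\kappa$ is nondecreasing on $[0,1]$ (take $\psi(y)=y$ in the dominance characterization), and it is exactly this monotonicity that drives the argument. Observe that $\tau\le\tau'$ because $r<r'$ and the paths are nondecreasing, and $E\tau'<\infty$ forces $E\tau<\infty$, so by Lemma~\ref{lemma:yav} all sums below are finite.

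First I would split the cycle at level $r$. Setting $S_2:=E\sum_{t=\tau+1}^{\tau'}Y_t\ge 0$ and $T_2:=E(\tau'-\tau)\ge 0$, the inclusion $\tau\le\tau'$ gives $E\sum_{t=0}^{\tau'}Y_t=E\sum_{t=0}^{\tau}Y_t+S_2$ and $E\tau'=E\tau+T_2$. Writing $m:=\frac{E\sum_{t=0}^{\tau}Y_t}{1+E\tau}$, the mediant rule $\frac{a}{b}\le\frac{a+c}{b+d}\Leftrightarrow\frac{a}{b}\le\frac{c}{d}$ (valid for $b,d>0$) shows that \eqref{yly} is equivalent to $\frac{S_2}{T_2}\ge m$ when $T_2>0$; when $T_2=0$ the process a.s.\ jumps over $[r,r')$, so $\tau=\tau'$ and \eqref{yly} holds with equality. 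It therefore suffices to establish the two-sided bound $m\le\kappa(r)\le \frac{S_2}{T_2}$.

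For the right inequality, monotone paths give $\{Y_k\ge r\}=\{k\ge\tau\}$ and $\{Y_k<r'\}=\{k<\tau'\}$, whence $S_2=\sum_{k\ge1}E\big(Y_{k+1}1_{\{r\le Y_k<r'\}}\big)$ and $T_2=\sum_{k\ge1}P(r\le Y_k<r')$. Conditioning $Y_{k+1}$ on $Y_k$ turns the $k$-th numerator term into $E\big(\kappa(Y_k)1_{\{r\le Y_k<r'\}}\big)\ge\kappa(r)\,P(r\le Y_k<r')$, since $\kappa(Y_k)\ge\kappa(r)$ on $\{Y_k\ge r\}$; summing gives $S_2\ge\kappa(r)T_2$. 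For the left inequality I would invoke the explicit formula of Lemma~\ref{lemma:yav}: in its numerator $EY_0=y_0\le r\le\kappa(r)$, $EY_1=\kappa(y_0)\le\kappa(r)$, and each $E(Y_{k+1}1_{\{Y_k<r\}})=E(\kappa(Y_k)1_{\{Y_k<r\}})\le\kappa(r)P(Y_k<r)$ because $\kappa(Y_k)\le\kappa(r)$ on $\{Y_k<r\}$. Bounding every numerator term by $\kappa(r)$ times the matching denominator term of Lemma~\ref{lemma:yav} yields $m\le\kappa(r)$. Chaining the two bounds gives $\frac{S_2}{T_2}\ge\kappa(r)\ge m$, proving \eqref{yly}.

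The main obstacle is the choice of separator, not any of the individual estimates. The tempting bounds $\frac{S_2}{T_2}\ge r$ (all extra-phase states lie above $r$) together with ``$m\le r$'' do \emph{not} close the argument, since the cycle average $m$ can exceed $r$ when the overshoot $Y_\tau$ is large; and a purely pathwise mediant comparison, although true path by path, fails at the level of the ratios of expectations that \eqref{yly} actually compares. The remedy is to separate with $\kappa(r)=\int_0^1 y\,Q_Y(dy|r)$ rather than with $r$: stochastic monotonicity of $Q_Y$ simultaneously bounds the extra-phase one-step means from below and the below-$r$ one-step means from above, both by $\kappa(r)$, and this two-sided use of monotonicity is what makes the expectations align.
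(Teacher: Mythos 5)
Your proof is correct and takes essentially the same route as the paper: your separator $\kappa(r)$ is exactly the paper's $c_r=\int_0^1 y\,Q_Y(dy|r)$, and your two bounds $m\le\kappa(r)$ and $S_2\ge\kappa(r)T_2$ are precisely the paper's estimates $m_k\le c_r p_k$, $E(Y_0+Y_1)\le r+c_r$, and $\Delta_k\ge c_r\eta_k$, obtained by the same conditioning on $Y_k$, the same monotonicity of $x\mapsto\int_0^1 y\,Q_Y(dy|x)$, and the same representation from Lemma~\ref{lemma:yav}. The only difference is bookkeeping: the paper verifies the cross-multiplied inequality \eqref{ymdp} in one computation with slack $(r-c_r)\sum_{k}\eta_k\le 0$, whereas you divide through via the mediant rule and exhibit $\kappa(r)$ as an explicit intermediate value between the two cycle averages.
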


\proof  $E\tau<\infty$ since $\tau\le \tau'$ w.p.1.
 For $k\ge 1$,  denote
\begin{align*}
&p_k= P(Y_k<r), \quad \eta_k=P(r\le Y_k< r'),\\
& m_k=
E(Y_{k+1} 1_{\{Y_k<r\}}),\quad
\Delta_k= E(Y_{k+1} 1_{\{r\le Y_k<r'\}}).
\end{align*}
By Lemma \ref{lemma:yav},
\begin{align*}
&\frac{E\sum_{t=0}^\tau Y_t}{1+E\tau }=\frac{EY_0+EY_1+
\sum_{k=1}^\infty m_k}{2+\sum_{k=1}^\infty  p_k},\\
& \frac{E\sum_{t=0}^{\tau'} Y_t}{1+E\tau' }=
\frac{EY_0+EY_1+ \sum_{k=1}^\infty (m_k+\Delta_k)}{2+
\sum_{k=1}^\infty  (p_k+\eta_k)}.
\end{align*}
So \eqref{yly} is equivalent to
\begin{align}
(EY_0+EY_1+\sum_{k=1}^\infty m_k)(\sum_{k=1}^\infty \eta_k)  \le (\sum_{k=1}^\infty \Delta_k) (2+\sum_{k=1}^\infty  p_k). \label{ymdp}
\end{align}
By the stochastic monotonicity of $Q_Y$, we have
\begin{align*}
E[Y_{k+1} 1_{\{Y_k<r\}} |Y_k] &= 1_{\{Y_k<r\}} \int_0^1 y Q_Y(dy|Y_k)\\
&\le  1_{\{Y_k<r\}}\int_0^1 y Q_Y(dy|r)=: c_r1_{\{Y_k<r\}}.
\end{align*}
Note that
\begin{align}
c_r=\int_{y\ge r} yQ_Y(dy|r)\ge r. \label{crr}
\end{align}
Moreover,
\begin{align*}
E[Y_{k+1} 1_{\{r\le Y_k<r'\}}|Y_k]&=1_{\{r\le Y_k<r'\}}\int_0^1 y
 Q_Y(dy|Y_k) \\
& \ge c_r 1_{\{r\le Y_k<r'\}} .
\end{align*}
It follows that
\begin{align}
&m_k=E[Y_{k+1} 1_{\{Y_k<r\}}] \le c_r p_k, \quad \Delta_k=E[Y_{k+1} 1_{\{r\le Y_k<r'\}}]\ge c_r \eta_k . \label{mdel}
\end{align}
Since $Y_0=y_0<r$,
$$ E[Y_1|Y_0]= \int_0^1 y Q_Y(dy|Y_0)\le c_r.$$
Hence, $E(Y_0+Y_1)\le r+c_r $.  By \eqref{mdel} and \eqref{crr},
 \begin{align*}
&  (EY_0+EY_1+\sum_{k=1}^\infty m_k)(\sum_{k=1}^\infty \eta_k)  - (\sum_{k=1}^\infty \Delta_k) (2+\sum_{k=1}^\infty  p_k)\\
\le &  (r+c_r+c_r \sum_{k=1}^\infty p_k) (\sum_{k=1}^\infty \eta_k) - c_r (\sum_{k=1}^\infty \eta_k)
(2+\sum_{k=1}^\infty p_k)  \\
=& (r-c_r) \sum_{k=1}^\infty \eta_k
\le 0,
\end{align*}
which establishes  \eqref{ymdp}. \qed

\begin{remark}\label{rem:b1}
If for each $y\in [0, 1)$,  $Q_Y(dx|y)$ has probability density function $q_Y(x|y)>0$ for $x\in (y, 1)$, then $c_r>r$ and $\eta_k>0$ for all $k\ge 1$. In this case, a strict inequality holds for \eqref{yly}. \qed
\end{remark}

\section*{Appendix C}
\label{sec:proofz}

\renewcommand{\theequation}{C.\arabic{equation}}
\setcounter{equation}{0}
\renewcommand{\thetheorem}{C.\arabic{theorem}}
\setcounter{theorem}{0}
\renewcommand{\thelemma}{C.\arabic{lemma}}
\setcounter{lemma}{0}
\renewcommand{\thesubsection}{C.\arabic{subsection}}
\setcounter{subsection}{0}
\renewcommand{\theremark}{C.\arabic{remark}}
\setcounter{remark}{0}

We assume (A3).
Let $\{x_t^{i,\theta}, t\ge 0\}$ be the Markov chain generated by  a
$\theta$-threshold policy with  $0<\theta<1$, where $x_0^{i,\theta}$ is given.
By Lemma \ref{lemma:erg},  $\{x_t^{i,\theta}, t\ge 0\}$ is  ergodic.
 We next define an auxiliary  Markov chain $\{Y_t, t\ge 0\}$ with $Y_0=0$ and the same transition kernel as $x_t^{i,\theta}$.
Denote
$
S_t =\sum_{i=0}^t Y_i
$ for $t\ge 0$.
Define $\tau=\inf\{t|Y_t \ge \theta\}$.

\begin{lemma} \label{lemma:YS}
We have
\begin{align}
\lim_{k\to \infty}\frac{1}{k} \sum_{t=0}^{k-1} Y_t= \frac{ES_\tau}
{ 1+E\tau} \qquad{\rm  w.p.1}.  \label{lrun}
\end{align}
\end{lemma}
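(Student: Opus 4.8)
The plan is to exploit the regenerative structure that the threshold policy imposes on $\{Y_t\}$ and then invoke the strong law of large numbers for regenerative processes (as in \cite{A03,SW93}), which is exactly the cycle-average framework developed in Appendix B. Under a $\theta$-threshold policy with $0<\theta<1$, whenever $Y_t\ge\theta$ the active action is applied and $Y_{t+1}=0$ with probability one, while for $Y_t<\theta$ the state evolves by $Q_0(\cdot|Y_t)$. Since the reset lands \emph{deterministically} at the single point $0$, the chain regenerates at $0$ immediately after each entry into $[\theta,1]$. Accordingly I would set $T_0=0$, $T_1=\tau+1$, and let $T_1<T_2<\cdots$ be the successive times at which $Y$ returns to $0$ after reaching $[\theta,1]$. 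By the strong Markov property, together with the fact that every regeneration places the chain at the fixed state $0$, the cycle lengths $L_j:=T_j-T_{j-1}$ are i.i.d.\ copies of $\tau+1$ and the cycle sums $R_j:=\sum_{t=T_{j-1}}^{T_j-1}Y_t$ are i.i.d.\ copies of $S_\tau=\sum_{t=0}^{\tau}Y_t$.

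Next I would verify the integrability needed for the renewal-reward theorem. Because $0\le Y_t\le 1$ we have $0\le R_1\le L_1$, so $ES_\tau\le 1+E\tau$; and under (A3) the hitting time satisfies $E\tau=E\tau_{Q_0}^\theta<\infty$ for $\theta\in(0,1)$ (as noted after the statement of the assumptions). Hence the mean cycle length $EL_1=1+E\tau$ and the mean cycle reward $ER_1=ES_\tau$ are both finite, and $EL_1$ is strictly positive, so the ratio on the right-hand side of \eqref{lrun} is well defined.

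Finally I would apply the regenerative SLLN. Let $N(k)=\max\{j:T_j\le k\}$ be the number of completed cycles by time $k$, and decompose $\sum_{t=0}^{k-1}Y_t=\sum_{j=1}^{N(k)}R_j+\sum_{t=T_{N(k)}}^{k-1}Y_t$, where the second term is the contribution of the incomplete final cycle. Dividing by $k$, the completed part equals $\big(N(k)/k\big)\cdot\big(N(k)^{-1}\sum_{j=1}^{N(k)}R_j\big)$, and by the SLLN for the i.i.d.\ $R_j$ together with the elementary renewal law $N(k)/k\to 1/EL_1=1/(1+E\tau)$ a.s., this converges to $ER_1/EL_1=ES_\tau/(1+E\tau)$ a.s. The boundary term is bounded in absolute value by $k-T_{N(k)}\le L_{N(k)+1}$ (using $0\le Y_t\le1$), and since $L_n/n\to0$ a.s.\ (a standard consequence of $EL_1<\infty$) while $(N(k)+1)/k$ stays bounded, $L_{N(k)+1}/k\to0$ a.s.; thus the incomplete cycle is negligible and \eqref{lrun} follows.

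The main obstacle I expect is not the computation but the careful justification of the genuine i.i.d.\ regeneration and the control of the final partial cycle: I must confirm that the deterministic return to the \emph{single} point $0$ (rather than a randomized return or an atom-free regeneration) legitimizes the strong Markov decomposition into i.i.d.\ cycles, and that the finiteness $E\tau<\infty$ supplied by (A3) is precisely what makes $EL_1<\infty$ so the long-run average exists and the boundary term vanishes after normalization. Once these two points are in place, the remainder is the standard renewal-reward argument.
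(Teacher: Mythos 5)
Your proof is correct and takes essentially the same approach as the paper: both exploit the deterministic reset to the single state $0$ to define regeneration times $T_n$ with $T_1=\tau+1$, observe that the cycle sums are i.i.d.\ copies of $S_\tau$ with $E\tau<\infty$ supplied by (A3), and conclude via the strong law of large numbers for regenerative processes. The only difference is presentational: where the paper cites the regenerative SLLN directly (Asmussen), you unfold the standard renewal--reward decomposition (the counting process $N(k)$, the ratio $ER_1/EL_1$, and the vanishing partial-cycle term) explicitly.
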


\proof By (A3), we can show $E\tau <\infty$.   Since $\{Y_t, t\ge 0\}$ has the same transition probability kernel as $\{x_t^{i, \theta}, t\ge 0 \}$, it is ergodic, and therefore the left hand side of \eqref{lrun} has a constant limit   w.p.1.
Define $T_0=0$ and $T_n$ as the time for $\{Y_t, t\ge 0\}$ to return to state 0 for the $n$th time. So $T_1=\tau+1$.  Define  $B_n= \sum_{t= T_{n-1}}^{T_n-1} Y_t$ for $n\ge 1$. We observe that $\{Y_t, t\ge 0\}$ is a regenerative process
(see e.g. \cite{A03,SW93} and \cite[Theorem 4]{AR14})
 with regeneration times $\{T_n, n\ge 1\}$ and that $\{B_n, n\ge 1\}$ is a sequence of i.i.d. random variables. Note that $B_1=S_\tau $ is the sum of $\tau+1$ terms. By the strong  law of large numbers for regenerative processes \cite[pp. 177]{A03},
 the lemma follows. \qed

Suppose $0<\theta<\theta'<1$. Then there exist two constants $C_\theta, C_{\theta'}$ such that
$$
\lim_{k\to \infty }  \frac{1}{k}\sum_{t=0}^{k-1}x_t^{i,\theta}=C_\theta, \qquad \lim_{k\to \infty }  \frac{1}{k}\sum_{t=0}^{k-1}x_t^{i, \theta'}
= C_{\theta'},  \quad \mbox{w.p.1.}
$$

\begin{lemma} \label{lemma:bb}
We have $C_\theta\le C_{\theta'}$.
\end{lemma}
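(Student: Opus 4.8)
The plan is to combine the cycle-average representation of Lemma~\ref{lemma:YS} with the comparison inequality of Lemma~\ref{lemma:tatap}. The essential point is that both limiting constants $C_\theta$ and $C_{\theta'}$ are cycle averages of one and the same increasing auxiliary chain, evaluated at two different stopping levels, so the desired comparison becomes a direct instance of the Appendix~B inequality.

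First I would introduce a single auxiliary Markov chain $\{Y_t, t\ge 0\}$ with $Y_0=0$ and transition kernel $Q_0$, and set $\tau=\inf\{t\,|\,Y_t\ge\theta\}$ and $\tau'=\inf\{t\,|\,Y_t\ge\theta'\}$. The key observation is that, before its first reset to $0$, the closed-loop process under a threshold policy evolves exactly according to $Q_0$; hence for the first-passage quantities the reset is irrelevant and the \emph{same} chain $\{Y_t\}$ serves both thresholds $\theta$ and $\theta'$. Since $\{Y_t\}$ shares the kernel and therefore the stationary distribution of $\{x_t^{i,\theta}\}$, their ergodic averages coincide, and applying Lemma~\ref{lemma:YS} to each threshold yields
\[
C_\theta=\frac{E\sum_{t=0}^{\tau}Y_t}{1+E\tau},\qquad
C_{\theta'}=\frac{E\sum_{t=0}^{\tau'}Y_t}{1+E\tau'}.
\]

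Next I would verify the hypotheses of Lemma~\ref{lemma:tatap} with $Q_Y=Q_0$, $r=\theta$, $r'=\theta'$ and $y_0=0$: by (A3)-i the kernel $Q_0$ satisfies $Q_0([y,1]|y)=1$ and is (strictly, hence) stochastically increasing; the initial condition $Y_0=0<\theta$ holds because $\theta\in(0,1)$; and $E\tau'<\infty$ follows from (A3), exactly as in the proof of Lemma~\ref{lemma:YS}, which in turn forces $E\tau<\infty$ since $\tau\le\tau'$ pathwise. With these in place, Lemma~\ref{lemma:tatap} gives precisely $C_\theta\le C_{\theta'}$, completing the argument.

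I expect the only subtle point to be the coupling remark that both constants are cycle averages of the \emph{same} underlying increasing process, so that the two ratios above are literally the two sides of the inequality in Lemma~\ref{lemma:tatap}; once this identification is made the conclusion is immediate, and the remaining work is the routine verification of the kernel conditions and finiteness of $E\tau'$ from (A3).
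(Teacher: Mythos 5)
Your proposal is correct and is essentially the paper's own proof: the paper likewise observes that by ergodicity $C_\theta$ (resp.\ $C_{\theta'}$) is independent of the initial state, so that $\lim_{k\to\infty}\frac{1}{k}\sum_{t=0}^{k-1}Y_t=C_\theta$ w.p.1, and then concludes directly from the cycle-average representation of Lemma~\ref{lemma:YS} together with the ratio comparison of Lemma~\ref{lemma:tatap} applied with $Q_Y=Q_0$, $r=\theta$, $r'=\theta'$, $y_0=0$. One wording slip worth fixing: your chain with kernel $Q_0$ does not literally ``share the kernel and therefore the stationary distribution'' of the closed-loop process $\{x_t^{i,\theta}\}$ (the latter resets to $0$ at the threshold, and the uncontrolled $Q_0$-chain is not ergodic), but this is harmless since, as you note, the two chains agree in law up to the first passage time, which is exactly what the application of Lemmas~\ref{lemma:YS} and~\ref{lemma:tatap} requires.
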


\proof Due to the ergodicity of the Markov chain, $C_\theta$ (resp., $C_{\theta'}$) does not depend on   $x_0^{i, \theta}$ (resp., $x_0^{i, \theta'}$). Therefore, $\lim_{k\to \infty}\frac{1}{k} \sum_{t=0}^{k-1} Y_t=C_\theta$ w.p.1.
The lemma follows from     Lemmas  \ref{lemma:YS} and \ref{lemma:tatap}.~\qed

\section*{Appendix D: An Auxiliary MDP}
\renewcommand{\theequation}{D.\arabic{equation}}
\setcounter{equation}{0}
\renewcommand{\thetheorem}{D.\arabic{theorem}}
\setcounter{theorem}{0}
\renewcommand{\thelemma}{D.\arabic{lemma}}
\setcounter{lemma}{0}
\renewcommand{\theremark}{D.\arabic{remark}}
\setcounter{remark}{0}

Assume (A3).
This appendix introduces an auxiliary control problem
to show the effect of the effort cost on the threshold parameter of the optimal policy.
The state and control processes
$\{(x_t^i, a_t^i), t\ge 0 \}$  are specified by  \eqref{xa0}-\eqref{xa1}. The cost has the form
\begin{align}
J_i^r=E\sum_{t=0}^\infty \rho^t \big(R_1(x^i_t)+  r 1_{\{a_t^i=a_1\}} \big),
\label{jir}
\end{align}
where $R_1$ is continuous and strictly increasing on $[0, 1]$ and $\rho \in (0,1)$, $r\in (0, \infty)$.
Let $r$ take two different values  $0<\gamma_1<\gamma_2$ and write the
corresponding dynamic programming equation
\begin{align}
v_l(x) =\min \left\{\rho \int_0^1 v_l (y) Q_0(dy|x)+R_1(x), \quad \rho v_l(0) +R_1(x)+\gamma_l   \right\} , \quad l=1,2,\ x\in \bS.\label{vlr12}
\end{align}

By the method in proving Lemma \ref{lemma:LV}, it can be shown that there exists a unique solution
$v_l\in C([0,1], \mathbb{R})$ and that the optimal policy $a^{i,l}(x)$ is a threshold policy.
If
$ \rho \int_0^1 v_l (y) Q_0(dy|1)<\rho v_l(0)+\gamma_l,
$
$a^{i,l}(x)\equiv  a_0$, and we follow the notation in Section \ref{sec:br}
to  denote the threshold  $\theta_l=1^+$. Otherwise, $a^{i,l}(x)$ is a
$\theta_l$-threshold policy with  $\theta_l\in [0,1]$, i.e., $a^{i,l}(x)= a_1$ if $x\ge \theta_l$, and $a^{i,l}(x)=a_0$ if $x<\theta_l$.

\begin{lemma}\label{lemma:the12}
 If $\theta_1\in (0, 1)$, $\theta_2\ne \theta_1$.
\end{lemma}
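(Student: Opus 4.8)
The plan is to argue by contradiction: assume $\theta_1=\theta_2=\theta$ with $\theta\in(0,1)$ and show that the common threshold, combined with $\rho<1$, forces an impossible identity. Write $G_l(x)=\int_0^1 v_l(y)Q_0(dy|x)$ for $l=1,2$, in analogy with the notation of Section \ref{sec:br}. Under the contradiction hypothesis both problems have an interior threshold at the same point $\theta$, so by the argument of Lemma \ref{lemma:LV} each $G_l$ is continuous and strictly increasing and the two branches of the minimum in \eqref{vlr12} (which share the common term $R_1(x)$) coincide exactly where $\rho G_l(x)=\rho v_l(0)+\gamma_l$; interiority of the crossing then yields the switching identities
\begin{align}
\rho G_l(\theta)=\rho v_l(0)+\gamma_l,\qquad l=1,2. \label{plan:switch}
\end{align}

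First I would record the behaviour of the two value functions to the right of the common threshold. For $y\in[\theta,1]$ the active branch is optimal in both problems, so $v_l(y)=\rho v_l(0)+R_1(y)+\gamma_l$, and hence the difference $d(y):=v_2(y)-v_1(y)$ equals the constant $c:=\rho[v_2(0)-v_1(0)]+(\gamma_2-\gamma_1)$ throughout $[\theta,1]$. The decisive step then invokes the support property $Q_0([\theta,1]|\theta)=1$ from (A3)-i), which says $Q_0(\cdot|\theta)$ charges only $[\theta,1]$. Since $d\equiv c$ there, this collapses the integral to $G_2(\theta)-G_1(\theta)=\int_0^1 d(y)Q_0(dy|\theta)=c$. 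Subtracting the two identities in \eqref{plan:switch} gives $\rho[G_2(\theta)-G_1(\theta)]=\rho[v_2(0)-v_1(0)]+(\gamma_2-\gamma_1)=c$, so that $\rho c=c$; as $\rho\in(0,1)$ this forces $c=0$, i.e. $v_2(0)-v_1(0)=-(\gamma_2-\gamma_1)/\rho<0$.

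To close the contradiction I would use monotone dependence of the value on the effort cost: since $J_i^{\gamma_2}\ge J_i^{\gamma_1}$ for every policy (they differ by $(\gamma_2-\gamma_1)E\sum_t\rho^t 1_{\{a_t^i=a_1\}}\ge 0$), taking infima gives $v_2\ge v_1$ pointwise, hence $v_2(0)\ge v_1(0)$, which is incompatible with $v_2(0)<v_1(0)$. Therefore $\theta_1=\theta_2$ cannot hold, and $\theta_2\ne\theta_1$. The only substantive point is the support observation $Q_0([\theta,1]|\theta)=1$, which is what reduces $\int d\,Q_0(\cdot|\theta)$ to the single constant $c$ and produces the contraction $\rho c=c$; everything else is bookkeeping of the two switching conditions, and the comparison $v_2\ge v_1$ is the standard monotonicity fact for discounted MDPs.
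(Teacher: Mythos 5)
Your proof is correct, and it reaches the paper's central identity by a genuinely lighter route. Both arguments share the same skeleton: assume a common interior threshold $\theta$, extract the switching identity $\rho\int_0^1 v_l(y)Q_0(dy|\theta)=\rho v_l(0)+\gamma_l$ for $l=1,2$, use the support property $Q_0([\theta,1]|\theta)=1$ together with the fact that $d:=v_2-v_1$ is the constant $c=\rho[v_2(0)-v_1(0)]+(\gamma_2-\gamma_1)$ on $[\theta,1]$, and collapse the subtracted identities to $\rho c=c$, which fails for $\rho\in(0,1)$ once $c\ne 0$. Where you differ is in how the two supporting facts are obtained. The paper establishes the constancy of $v_2-v_1$ on $[\theta,1]$ probabilistically: it invokes the representation $v_l(x)=E\sum_t\rho^t[R_1(x_t^i)+\gamma_l 1_{\{a_t^i=a_1\}}]$ under the common threshold policy, uses the reset structure ($\hat x_1^i=0$ after an active action, so the shifted process from $x\ge\theta$ has the same finite-dimensional distributions as the process started at $0$) to show $v_2(x)-v_1(x)=\delta_{21}+\rho\Delta$ with $\Delta=\delta_{21}E\sum_t\rho^t 1_{\{\check a_t^i=a_1\}}\ge 0$, which yields $c>0$ directly and rules out $\rho c=c$ in one stroke. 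You instead read the constancy straight off the active branch of \eqref{vlr12} ($v_l(y)=\rho v_l(0)+R_1(y)+\gamma_l$ for $y\ge\theta$, valid because $G_l$ is strictly increasing and $\rho G_l(y)\ge\rho G_l(\theta)=\rho v_l(0)+\gamma_l$ there), conclude $c=0$ from $\rho c=c$, and then rule out $c=0$ via the standard monotonicity $v_2\ge v_1$ of the discounted value in the effort cost (which, if one wants to avoid the MDP interpretation entirely, also follows by induction on value iteration for the operators in \eqref{dpo}). Your version avoids the coupling of optimal processes and their finite-dimensional distributions altogether, at the price of one extra (but routine) comparison fact; the paper's version gets strict positivity of $c$ for free from the probabilistic representation. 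Both are complete; yours is the more elementary bookkeeping.
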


\proof
We prove by contradiction. Suppose for some $\theta\in (0,1)$,
\begin{align}
\theta_1=\theta_2=\theta.  \label{th12}
\end{align}
Under   \eqref{th12},
 the resulting optimal policy leads to  the representation (see e.g. \cite[pp. 22]{HL93})
$$
v_l(x)= E\sum_{t=0}^\infty \rho^t \left[R_1( x_t^{i})+\gamma_l 1_{\{ a_t^i=a_1\}}\right], \quad l=1,2,
$$
where $\{ x_t^i, t\ge 0\}$ is generated by the $\theta$-threshold policy $a_t^i( x_t^i)$ and $x_0^i=x$. Denote $\delta_{21}=\gamma_2-\gamma_1$.

For  fixed $x\ge \theta$ and $ x_0^i=x$, denote the resulting optimal state and control processes by $\{(\hat x_t^i, \hat a_t^i), t\ge 0  \}$.
Then $\hat a_0^i =a_1$ w.p.1.,  and
$$
v_2(x)-v_1(x)= \delta_{21} +\delta_{21} E\sum_{t=1}^\infty \rho^t1_{\{ \hat a_t^i=a_{1}\}}, \qquad x\ge \theta.
 $$

Next consider  $x_0^i=0$ and denote the optimal state and control processes by
$\{(\check{x}_t^i, \check{a}_t^i), t\ge 0  \}$.
Then
$$
v_2(0)-v_1(0)= \delta_{21} E\sum_{t=0}^\infty \rho^t1_{\{ \check{a}_t^i=a_{1}\}}
=: \Delta.
$$

It is clear that $\hat x_1^i=0$ w.p.1. By the optimality principle,
$\{(\hat x_t^i, \hat a_t^i), t\ge 1  \}$ may be interpreted as the optimal state and control processes of the MDP with initial
state 0 at $t=1$. Hence the two processes $\{(\hat x_t^i, \hat a_t^i), t\ge 1  \}$ and $\{(\check{x}_t^i, \check{a}_t^i), t\ge 0  \}$, where ${\check x}_0^i=0$, have the same finite dimensional distributions. In particular, $\hat a_{t+1}^i$ and $\check{a}_t^i$ have the same distribution for $t\ge 0$.
Therefore,
$$
E\sum_{t=1}^\infty \rho^{t-1}1_{\{ \hat a_t^i=a_{1}\}}
=
E\sum_{t=0}^\infty \rho^t1_{\{ \check{a}_t^i=a_{1}\}}.
$$
It follows that
\begin{align}
v_2(x)-v_1(x)= \delta_{21}+\rho \Delta, \qquad \forall x\ge  \theta. \label{v12d}
\end{align}
Combining
  \eqref{vlr12} and \eqref{th12} gives
 \begin{align*}
\rho \int_0^1 v_l(y) Q_0(dy|\theta)=  \rho v_l(0)+\gamma_l  ,
 \qquad l=1, 2,
 \end{align*}
 which implies
 \begin{align}
\rho \int_0^1 [v_2(x)-v_1(x)] Q_0(dx|\theta)  =\delta_{21}+ \rho \Delta.  \label{v21del}
 \end{align}
By  $Q_0([0, \theta)|\theta)=0$ and \eqref{v12d},  \eqref{v21del} further yields
 $\rho( \delta_{21}+ \rho \Delta) =  \delta_{21}+\rho \Delta$,
 which is impossible since $0<\rho<1$ and $\delta_{21}+\rho \Delta>0$.
 Therefore, \eqref{th12} does not hold. This completes the proof.
  \qed

For the MDP with cost
\eqref{jir},
we continue to analyze the  dynamic programming equation
\begin{align}\label{Vstab}
v_r(x) =  \min \Big[\rho \int_0^1 v_r( y) Q_0(dy|x) + R_1(x),  \quad \rho v_r(0) + R_1(x)+ r\Big].
\end{align}
For each fixed $r \in (0, \infty )$, we obtain the optimal policy as a threshold policy with threshold parameter $\theta(r)$.
 By evaluating the cost \eqref{jir}   associated with  the two policies
 $a_t^i(x_t^i)\equiv a_0$ and $a_t^i(x_t^i)\equiv a_1$, respectively,  we have the prior estimate
\begin{align}
 v_r(x)\le \min\left\{ \frac{R_1(1)}{1-\rho },\ R_1(x)+\frac{r +\rho R_1(0) }{1-\rho}   \right\}. \label{rvr}
\end{align}
On the other hand, let $\{x_t^i, t\ge 0\}$ with $x_0^i=x$ be generated by any fixed Markov policy. Then
\begin{align}
E\sum_{t=0}^\infty \rho^t (R_1(x_t^i)+ r 1_{\{a_t^i=a_1\}})\ge R_1(x) +
\sum_{t=1}^\infty \rho^t R_1(0), \nonumber
\end{align}
which implies
\begin{align}
v_r(x)\ge R_1(x)+\frac{\rho R_1(0)  }{1-\rho }. \label{vrge}
\end{align}

If $r>\frac{\rho R_1(1)}{1-\rho }$, it follows from \eqref{rvr} that
\begin{align}
\rho \int_0^1 v_r( y) Q_0(dy|x) < \rho v_r(0)+r,   \quad \forall x,  \label{th1}
\end{align}
i.e., $\theta(r)=1^+$.

\begin{lemma}\label{lemma:th0}
There exists $\delta>0$ such that for all $0<r<\delta$,
\begin{align}
\rho \int_0^1 v_r( y) Q_0(dy|x)> \rho v_r(0)+r, \quad \forall x,  \label{th0}
\end{align}
and so $\theta(r)=0$.
\end{lemma}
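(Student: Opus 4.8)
The plan is to leverage the a priori estimates \eqref{rvr} and \eqref{vrge}, which trap $v_r$ within an $O(r)$ band around the ``always reset'' cost, and to deduce that the inaction branch of the minimum in \eqref{Vstab} strictly exceeds the action branch uniformly in $x$ as soon as $r$ is small. Write $A(x):=\rho\int_0^1 v_r(y)\,Q_0(dy|x)-\rho v_r(0)-r$, so that \eqref{th0} is precisely the claim $A(x)>0$ for all $x$.

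First I would set $\Phi(x):=\int_0^1 R_1(y)\,Q_0(dy|x)$ and insert the lower bound \eqref{vrge} inside the integral together with the upper bound $v_r(0)\le \frac{R_1(0)+r}{1-\rho}$ read off from the second argument of the minimum in \eqref{rvr}. After the $\frac{\rho R_1(0)}{1-\rho}$ terms telescope, a short computation reduces the estimate to
\[
A(x)\ \ge\ \rho\bigl[\Phi(x)-R_1(0)\bigr]-\frac{r}{1-\rho}.
\]
Thus everything hinges on a uniform positive lower bound for $\Phi(x)-R_1(0)$.

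The \emph{main obstacle} is exactly to show $m:=\inf_{x\in[0,1]}\bigl[\Phi(x)-R_1(0)\bigr]>0$, and this is where (A3) and the strict monotonicity of $R_1$ are used. Weak continuity of $Q_0$ renders $\Phi$ continuous on the compact interval $[0,1]$. For $x>0$, since $Q_0([x,1]|x)=1$ and $R_1$ is strictly increasing, $\Phi(x)\ge R_1(x)>R_1(0)$; at the delicate endpoint $x=0$ the positive density $q(\cdot|0)>0$ forces $Q_0(\cdot|0)$ to place mass on $(0,1]$, so $\Phi(0)=\int_0^1 R_1(y)q(y|0)\,dy>R_1(0)$ as well. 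A continuous, strictly positive function on a compact set has a positive minimum, so $m>0$.

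Finally I would set $\delta:=\rho(1-\rho)m$. For every $0<r<\delta$ and every $x$ the displayed bound gives $A(x)\ge \rho m-\frac{r}{1-\rho}>0$, which is \eqref{th0}; consequently the minimum in \eqref{Vstab} always selects the action branch, the optimal policy is $a_1$ on all of $\bS$, and therefore $\theta(r)=0$.
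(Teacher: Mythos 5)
Your proposal is correct and takes essentially the same route as the paper: both insert the a priori bounds \eqref{vrge} and \eqref{rvr} to reduce \eqref{th0} to the inequality $\rho\bigl[\int_0^1 R_1(y)Q_0(dy|x)-R_1(0)\bigr]>\frac{r}{1-\rho}$ uniformly in $x$, and both arrive at the same threshold $\delta=\rho(1-\rho)\times(\text{uniform gap})$. The only (minor) divergence is in how the uniform gap is obtained: you use continuity of $x\mapsto\int_0^1 R_1(y)Q_0(dy|x)$ plus compactness of $[0,1]$, whereas the paper invokes the stochastic monotonicity of $Q_0$ in (A3)-i to note that this function is minimized at $x=0$, so the gap is simply $C_{R_1}=\int_0^1 R_1(y)Q_0(dy|0)-R_1(0)>0$, which coincides with your $m$.
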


\proof By \eqref{vrge},
\begin{align}
\rho \int_0^1 v_r( y) Q_0(dy|x) &\ge \rho\int_0^1 R_1(y) Q_0(dy|x)
+\frac{\rho^2 R_1(0)}{1-\rho } \nonumber \\
 & \ge \rho\int_0^1 R_1(y) Q_0(dy|0)
+\frac{\rho^2 R_1(0)}{1-\rho }  , \nonumber
\end{align}
and \eqref{rvr} gives
$$
\rho v_r(0)+r \le
\frac{ \rho R_1(0)}{1-\rho }+
\frac{ r }{1-\rho }.
$$
Since $R_1(x)$ is strictly increasing,
$$
C_{R_1}: =\int_0^1 R_1(y) Q_0(dy|0)- R_1(0)>0.
$$
And we have
$$
\rho \int_0^1 v_r( y) Q_0(dy|x)- (\rho v_r(0)+r) \ge \rho C_{R_1}- \frac{r}{1-\rho}.
$$
It suffices to take $\delta =\rho (1-\rho) C_{R_1}.$ \qed

Define the nonempty sets
$$
{\cal R}_{a_0}=\{r>0| \eqref{th1} \mbox{ hods}\}, \quad {\cal R}_{a_1}=\{r>0|  \eqref{th0} \mbox{ holds}\}  .
$$

\begin{remark}\label{reC:int}
We have $(\frac{\rho R_1(1)}{1-\rho }, \infty   )\subset {\cal R}_{a_0} $ and  $(0, \delta)\subset {\cal R}_{a_1} $.
\end{remark}

\begin{lemma} \label{lemma:connect}
 Let $(r, v_r)$ be the parameter and the associated solution in \eqref{Vstab}.

i) If $r>0$ satisfies
\begin{align}
\rho \int_0^1 v_r(y) Q_0(dy|x) \le \rho v_r(0) +r, \quad \forall x,
\label{rth1}
\end{align}
  then  any $r'>r$ is in ${\cal R}_{a_0}$.

ii) If $r>0$ satisfies
\begin{align}
\rho \int_0^1 v_r(y) Q_0(dy|x) \ge \rho v_r(0) +r,\quad \forall x,
\label{rth0}
\end{align}
then any  $r'\in (0,r)$ is in ${\cal R}_{a_1}$.
\end{lemma}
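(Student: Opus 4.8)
The plan is to handle both parts with one device: each hypothesis by itself determines $v_r$ completely, and then I exhibit a closely related continuous function that solves \eqref{Vstab} at the new parameter and invoke the uniqueness of the solution of \eqref{Vstab} to identify it with $v_{r'}$. Throughout I use that \eqref{rth1} says the inaction branch of \eqref{Vstab} never exceeds the action branch, while \eqref{rth0} says the reverse.

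For part i), \eqref{rth1} forces the minimum in \eqref{Vstab} to be attained by inaction at every $x$, so $v_r$ solves the linear equation $v_r(x)=\rho\int_0^1 v_r(y)Q_0(dy|x)+R_1(x)$; this equation has a unique solution by a contraction argument ($\rho<1$) and, crucially, does not involve $r$. Fixing $r'>r$, I would check that this same $v_r$ still solves \eqref{Vstab} with parameter $r'$: it already satisfies the linear (inaction) identity, so I only need inaction to remain preferred, i.e. $\rho\int_0^1 v_r(y)Q_0(dy|x)\le \rho v_r(0)+r'$ for all $x$, and this follows strictly from \eqref{rth1} since $r'>r$. Uniqueness then gives $v_{r'}=v_r$, and the same inequality now reads $\rho\int_0^1 v_{r'}(y)Q_0(dy|x)<\rho v_{r'}(0)+r'$ for all $x$, which is exactly \eqref{th1}; hence $r'\in{\cal R}_{a_0}$.

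For part ii), \eqref{rth0} forces the action branch to attain the minimum everywhere, so $v_r(x)=\rho v_r(0)+R_1(x)+r$; solving at $x=0$ and substituting yields the explicit always-act form $v_r(x)=R_1(x)+\tfrac{r+\rho R_1(0)}{1-\rho}$, which is continuous. For $r'\in(0,r)$ I take the candidate $u(x):=R_1(x)+\tfrac{r'+\rho R_1(0)}{1-\rho}$, which differs from $v_r$ only by the positive constant $\tfrac{r-r'}{1-\rho}$ and, by the same algebra, satisfies the action-branch identity $u(x)=\rho u(0)+R_1(x)+r'$. Because a constant shift cancels out of the difference $\rho\int_0^1(\cdot)\,Q_0(dy|x)-\rho(\cdot)(0)$, the hypothesis \eqref{rth0} transfers verbatim to $u$, giving $\rho\int_0^1 u(y)Q_0(dy|x)\ge \rho u(0)+r$ for all $x$; since $r'<r$ this is strictly larger than $\rho u(0)+r'$, so the action branch is strictly smaller for $u$. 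Thus $u$ solves \eqref{Vstab} at $r'$, uniqueness gives $v_{r'}=u$, and the displayed strict inequality is precisely \eqref{th0}, so $r'\in{\cal R}_{a_1}$.

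The genuinely routine pieces are the algebra solving the two functional equations and tracking the constant shift. The step I expect to be the crux is recognizing that each hypothesis pins down $v_r$ outright --- as the $r$-independent never-act value in (i) and as the explicit always-act value in (ii) --- since only with $v_r$ in hand can the monotone dependence on the effort cost be carried over to the perturbed parameter; once that is done, the strict inequalities needed for membership in ${\cal R}_{a_0}$ and ${\cal R}_{a_1}$ fall out automatically from $r'\neq r$.
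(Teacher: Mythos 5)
Your proof is correct and follows essentially the same route as the paper's: in each part you pin down $v_r$ explicitly from the hypothesis, exhibit a candidate solving \eqref{Vstab} at the perturbed parameter $r'$, and invoke uniqueness of the solution to identify it with $v_{r'}$, after which the strict inequalities defining ${\cal R}_{a_0}$ and ${\cal R}_{a_1}$ follow from $r'\neq r$. The only cosmetic difference is in part ii), where you transfer \eqref{rth0} to the shifted candidate via the constant-shift cancellation, while the paper substitutes the explicit forms to reduce everything to the inequality $\rho R_1(0)+r\le \rho\int_0^1 R_1(y)\,Q_0(dy|x)$; these amount to the same computation.
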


\proof i) For $r'>r$, $v_{r'}$ is uniquely solved from \eqref{Vstab} with $r'$ in place of $r$. We can use \eqref{rth1} to  verify
$$
v_r(x)= \min \left[\rho \int_0^1 v_r(y) Q_0(dy|x) +R_1(x) , \quad \rho v_r(0) +R_1(x)+r'\right].
$$
Hence $v_{r'}=v_r$ for all $x\in [0,1]$. It follows that
$\rho \int_0^1 v_{r'}(y) Q_0(dy|x) <  \rho v_{r'}(0)+r'$ for all $x$. Hence  $r'\in {\cal R}_{a_0}$.

ii) By \eqref{Vstab} and  \eqref{rth0},
 $v_r(0) = \frac{ R_1(0)+r}{1-\rho } $,  
 and subsequently,
\begin{align*}
 v_r(x) &= \rho v_r(0) +R_1(x) +r
     = \frac{\rho R_1(0)+r}{1-\rho }  +R_1(x).  
\end{align*}
By substituting $v_r(0)$ and $v_r(x)$ into \eqref{rth0}, we obtain
\begin{align}
\rho R_1(0) +r \le \rho \int_0^1 R_1(y) Q_0(dy|x), \quad \forall x.
\label{rhorrle}
\end{align}
Now for $0<r'<r$, we construct $v_{r'}(x)$, as a candidate solution to
\eqref{Vstab} with $r$ replaced by $r'$,    to satisfy
\begin{align}
v_{r'}(0)= \rho v_{r'}(0)+R_1(0)+r', \quad
v_{r'}(x) = \rho v_{r'}(0) +R_1(x) +r', \label{defvp}
\end{align}
which gives
\begin{align}
v_{r'}(x) =\frac{\rho R_1(0)+r'  }{1-\rho }  +R_1(x).
\label{ss}
\end{align}

We  show that $v_{r'}(x)$ in \eqref{ss} satisfies
\begin{align}
\rho v_{r'}(0) +r' <\rho \int_0^1 v_{r'}(y) Q_0(dy|x), \quad \forall x,
\label{rovrp}
\end{align}
which is equivalent to
$
\rho R_1(0) +r' <\rho \int_0^1 R_1(y) Q_0(dy|x)$ for all $ x,
 $
which in turn follows from \eqref{rhorrle}.
By \eqref{defvp} and  \eqref{rovrp},  $v_{r'}$ indeed satisfies
\eqref{Vstab} with $r$ replaced by $r'$.  So $r'\in {\cal R}_{a_1}$. \qed

Further define
$$
\underline{r} =\sup {\cal R}_{a_1}, \quad \overline r= \inf  {\cal R}_{a_0}.
$$

\begin{lemma} \label{lemma:interv}
i) $\underline r$ satisfies
$\rho \int_0^1 v_{\underline r}(y) Q_0(dy|0) = \rho v_{\underline r}(0) +
\underline {r}$,
and $\theta(\underline r)=0$.

ii)
$\overline r$ satisfies
$\rho \int_0^1 v_{\overline r}(y) Q_0(dy|1) =\rho v_{\overline r}(1) = \rho v_{\overline r}(0) +
\overline {r}$,
and $\theta(\overline r)=1$.

iii)
We have  $0<\underline r<\overline r<\infty$.

iv) The threshold $\theta(r)$ as a function of $r\in (0, \infty)$ is continuous and strictly increasing on $[\underline r, \overline r]$.
\end{lemma}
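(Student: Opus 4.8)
The plan is to reduce the whole statement to the behaviour of two scalar functions of the effort cost. Writing $G_r(x):=\int_0^1 v_r(y)Q_0(dy|x)$ for the unique solution $v_r$ of \eqref{Vstab}, set $h(r):=\rho G_r(0)-\rho v_r(0)-r$ and $H(r):=\rho G_r(1)-\rho v_r(0)-r$. Since $G_r$ is continuous and strictly increasing in $x$ (by the method of Lemma \ref{lemma:LV} applied to \eqref{Vstab}), we have $h(r)<H(r)$, and the threshold can be read off exactly as in Lemma \ref{lemma:LV}: $\theta(r)=0\iff h(r)\ge 0$, $\theta(r)=1^+\iff H(r)<0$, $\theta(r)=1\iff H(r)=0$, and otherwise (i.e. $h(r)<0<H(r)$) $\theta(r)\in(0,1)$ is the unique root of $\rho G_r(\cdot)=\rho v_r(0)+r$. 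In particular ${\cal R}_{a_1}=\{r: h(r)>0\}$ and ${\cal R}_{a_0}=\{r: H(r)<0\}$.

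The analytic input that drives everything is the continuity of $r\mapsto v_r$. The operator defining \eqref{Vstab} is a $\rho$-contraction on $C([0,1],\mathbb{R})$ uniformly in $r$, and replacing $r$ by $r'$ perturbs it by at most $|r-r'|$ in supremum norm; the standard perturbation estimate for fixed points of contractions then gives $\|v_r-v_{r'}\|\le |r-r'|/(1-\rho)$. Hence $r\mapsto v_r$ is Lipschitz and $h,H$ are continuous. Parts (i)-(ii) are now immediate from the definitions of $\underline r,\overline r$: for $r<\underline r$ one has $r\in{\cal R}_{a_1}$ (by Lemma \ref{lemma:connect}(ii)), so $h(r)>0$, whereas $h(r)\le 0$ for $r>\underline r$; letting $r\to\underline r$ yields $h(\underline r)=0$, which is the indifference identity at $x=0$ and gives $\theta(\underline r)=0$. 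The same argument at $x=1$ gives $H(\overline r)=0$, and the first equality in (ii) is simply $\int_0^1 v_{\overline r}(y)Q_0(dy|1)=v_{\overline r}(1)$ because $Q_0(\cdot|1)=\delta_1$ under (A3). For (iii), $\underline r\ge\delta>0$ and $\overline r\le \rho R_1(1)/(1-\rho)<\infty$ come from Remark \ref{reC:int}, while $\underline r<\overline r$ follows since $\underline r=\overline r$ would force $\rho G(0)=\rho v(0)+r=\rho G(1)$ for the common solution, contradicting the strict monotonicity of $G_r$.

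For (iv) I would first show $\theta(r)\in(0,1)$ for every $r\in(\underline r,\overline r)$. If $\theta(r)=0$, the always-act value function computed in the proof of Lemma \ref{lemma:connect}(ii) gives $h(r)=\rho C_{R_1}-r$, and self-consistency $h(r)\ge 0$ forces $r\le\rho C_{R_1}=\underline r$; symmetrically, if $\theta(r)=1$ then (using that $Q_0(\cdot|x)$ has no atom at $1$ for $x<1$ by (A3)-ii, so $v_r$ agrees with the never-act value off $\{1\}$) $H$ becomes an affine, strictly decreasing function of $r$ whose unique zero is $\overline r$. Both conclusions contradict $r\in(\underline r,\overline r)$. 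Granted interiority, continuity of $\theta$ on $[\underline r,\overline r]$ follows from joint continuity of $(r,x)\mapsto G_r(x)$ together with the strict monotonicity of $G_r$: any limit of $\theta(r_n)$ along $r_n\to r_0$ must be the unique root of $\rho G_{r_0}(\cdot)=\rho v_{r_0}(0)+r_0$, and the endpoints are pinned down by (i)-(ii). Finally, $\theta$ is injective: interior values are separated by Lemma \ref{lemma:the12}, whereas $\theta=0$ occurs only at $\underline r$ and $\theta=1$ only at $\overline r$ by the previous paragraph. A continuous injection on an interval is strictly monotone, and $\theta(\underline r)=0<1=\theta(\overline r)$ forces it to be strictly increasing.

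I expect the genuine obstacle to be excluding flat stretches of $\theta$ at the boundary values $0$ and $1$ inside $(\underline r,\overline r)$. Lemma \ref{lemma:the12} only separates thresholds that are interior, so it says nothing near the endpoints, and monotonicity there has to be extracted by hand from the explicit always-act and act-only-at-$1$ value functions; it is precisely the atom-free structure of $Q_0$ in (A3)-ii that renders the relevant quantities $h$ and $H$ strictly (indeed affinely) decreasing in $r$ on those regimes, which is what closes the argument.
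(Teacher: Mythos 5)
Your proof is correct and follows the same skeleton as the paper's --- the regime sets ${\cal R}_{a_0}$, ${\cal R}_{a_1}$ from Lemmas \ref{lemma:th0} and \ref{lemma:connect}, continuity of $r\mapsto v_r$, and Lemma \ref{lemma:the12} for injectivity --- but several sub-steps are executed differently, and in each case your version is more explicit. For (i)--(ii) the paper obtains the weak inequality at $\underline r$ by a limit through ${\cal R}_{a_1}$ and then rules out strictness by perturbing to $\underline r+\epsilon$ and invoking monotonicity of $g(x)=\int_0^1 v_{\underline r+\epsilon}(y)Q_0(dy|x)$; your two-sided sign argument ($h>0$ on $(0,\underline r)$, $h\le 0$ beyond, continuity forcing $h(\underline r)=0$) is the same content streamlined, and your Lipschitz bound $\|v_r-v_{r'}\|\le |r-r'|/(1-\rho)$ supplies the continuous dependence that the paper merely asserts. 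For interiority of $\theta(r)$ on $(\underline r,\overline r)$ --- which the paper dismisses with ``it can be shown using \eqref{Vstab}'' --- you actually prove it, and your computation $h(r)=\rho C_{R_1}-r$ on the always-act regime identifies $\underline r=\rho C_{R_1}$ exactly, sharpening the bound $\delta=\rho(1-\rho)C_{R_1}$ of Lemma \ref{lemma:th0}; likewise your affine formula for $H$ on the $\theta=1$ regime (valid because $v_r$ coincides with the never-act value off $\{1\}$, using absolute continuity of $Q_0(\cdot|x)$ for $x<1$ and a sup-norm contraction on $[0,1)$) pins $\theta(r)=1$ to the single parameter $r=\rho[V_\rho(1)-V_\rho(0)]=\overline r$, consistent with the condition \eqref{glb}. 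For strict monotonicity the paper runs an intermediate-value-theorem manipulation to manufacture two parameters with equal interior thresholds and then contradicts Lemma \ref{lemma:the12}; your replacement --- a continuous injection on an interval is strictly monotone, with injectivity from Lemma \ref{lemma:the12} in the interior and from the endpoint identifications at $\underline r$ and $\overline r$ --- is cleaner and equally rigorous.

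One small presentational gap: in (iii) you only exclude $\underline r=\overline r$, whereas the ordering $\underline r\le\overline r$ must be established first (the paper does this via Lemma \ref{lemma:connect}-i) and disjointness of the regime sets). In your framework it is one line --- since $h<H$, the interval $(0,\underline r)\subset{\cal R}_{a_1}$ is disjoint from ${\cal R}_{a_0}$, so $\overline r=\inf{\cal R}_{a_0}\ge\underline r$ --- but it should be stated.
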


\proof i)-ii) By Lemmas \ref{lemma:th0} and
\ref{lemma:connect}, we have
$ 0<\underline r\le \infty$ and $0\le  \overline r<\infty$.
Assume $\underline r=\infty$; then ${\cal R}_{a_1} =(0,\infty)$ giving
 ${\cal R}_{a_0}=\emptyset$, a contradiction. So $0<\underline r<\infty$.
For $\delta >0$ in Lemma \ref{lemma:th0}, we have $(0, \delta)\subset
{\cal R}_{a_1}$. Therefore, $0<\bar r <\infty$.
Note that $v_r$ depends on the parameter $r$ continuously, i.e.,
 $\lim_{|r'-r|\to 0}\sup_x|v_{r'}(x)-v_r(x)|=0$.
Hence
$$
\rho \int_0^1 v_{\underline r}(y) Q_0(dy|0) \ge  \rho
v_{\underline r}(0) +
\underline {r}.
$$
Now assume
\begin{align}
\rho \int_0^1 v_{\underline r}(y) Q_0(dy|0) >  \rho v_{\underline r}(0) +
\underline {r}. \label{rhor16}
\end{align}
Then there exists a sufficiently small $\epsilon>0$ such that \eqref{rhor16} still holds when
$(\underline r+\epsilon , v_{\underline r+\epsilon}   )$ replaces
$({\underline r} , v_{\underline r} )$; since $g(x)=\int_0^1
 v_{\underline r+\epsilon}(y) Q_0(dy|x) $ is increasing in $x$,  then $\underline r+\epsilon \in {\cal R}_{a_1}$, which is impossible. Hence \eqref{rhor16} does not hold, and this proves i). ii) can be shown in a similar manner.

To show iii),
 assume
\begin{align}
0< \overline r <\underline r < \infty.  \label{rrassum}
\end{align}
Then, recalling Remark \ref{reC:int}, there exist $r'\in {\cal R}_{a_0} $ and $r''\in {\cal R}_{a_1} $ such that
$$
0< \overline r<r'<r''<\underline r< \infty.
$$
By Lemma \ref{lemma:connect}-i), $r''\in {\cal  R}_{a_0}$, and
then $r''\in {\cal R}_{a_0}\cap {\cal  R}_{a_1}=\emptyset$, which is impossible. Therefore, \eqref{rrassum} does not hold and we conclude $0<\underline r\le \overline r<\infty$.
   We further assume $\underline r=\overline r$. Then
i)-ii) would imply
$\int_0^1  v_{\underline r}(y) Q_0(dy|0) = v_{\underline r}(1)$,
which is impossible since $v_{\underline r}$ is  strictly increasing
on $[0, 1]$ and (A3) holds. This proves iii).

iv) By the definition of $\underline r$ and $\overline r$, it can be shown using \eqref{Vstab} that
$\theta(r)\in (0, 1)$  for $r\in (\underline r, \overline r)$.
By the continuous dependence of the function $v_r(\cdot)$ on $r$ and the method of proving \cite[Lemma 10]{HM16Chen}, we can show  the continuity of $\theta(r)$ on $(0,1)$, and further show $\lim_{r\to \underline r^+}\theta(r)=0$ and $\lim_{r\to \overline{r}^-} \theta(r) =1$. So $\theta(r)$ is continuous on $[\underline r, \overline r]$.
If $\theta(r)$ were not strictly increasing on $[\underline r, \overline r]$,  there would exist
 $\underline r<r_1<r_2<\overline r$ such that
 \begin{align} \theta(r_1)\ge \theta(r_2). \label{s1ges2}
 \end{align}
  If $\theta (r_1)>\theta (r_2)$ in \eqref{s1ges2}, by the continuity of $\theta(r)$,
$\theta(\underline r)=0$, $\theta(\overline r)=1$, and the intermediate value theorem we may find $r'\in (\underline r, r_1)$ such that  $\theta( r_1')=\theta( r_2)$.
Next, we replace $r_1$ by $r_1'$.
Thus if $\theta(r)$ is not strictly increasing,  we may find $r_1<r_2$ from $(\underline r, \overline r)$ such that $\theta (r_1)=
\theta( r_2)\in (0, 1)$, which is a contradiction to Lemma \ref{lemma:the12}.
 This proves iv).
 \qed

\begin{remark}
By Lemmas \ref{lemma:connect} and \ref{lemma:interv},
${\cal R}_{a_1}= (0, \underline r)$ and
${\cal R}_{a_0}= (\overline r, \infty)$.
\end{remark}

\section*{Acknowledgement}
We would like to thank Aditya Mahajan for helpful discussions.

%
%
%

\end{document}